\documentclass[leqno,12pt]{article}
\usepackage{amssymb,amsmath,amsfonts,amsthm}
\usepackage{authblk}
\usepackage{epsfig,color,mathrsfs}
\usepackage{graphicx}
\usepackage{amscd}
\usepackage{caption}
\usepackage{mathrsfs,bbm}
\usepackage{multicol}
\usepackage{color}

\textwidth=6in \textheight=9.22in \topmargin=-2.1cm
\oddsidemargin=0.6cm \evensidemargin=0.1cm

\newcommand{\be}{\begin{equation}}
\newcommand{\ee}{\end{equation}}
\newcommand{\ben}{\begin{eqnarray*}}
\newcommand{\een}{\end{eqnarray*}}

\newcommand{\R}{\mathbb R}
\newcommand{\C}{\mathbb C}

\allowdisplaybreaks

\newtheorem{theorem}{Theorem}[section]

\newtheorem{corollary}[theorem]{Corollary}

\newtheorem{lemma}[theorem]{Lemma}

\newtheorem{proposition}[theorem]{Proposition}


\definecolor{darkgreen}{rgb}{0.09, 0.45, 0.27}
\definecolor{debianred}{rgb}{0.84, 0.04, 0.33}

\numberwithin{equation}{section}
\begin{document}
\title{\vskip-0.3in Steady state solutions for the Gierer-Meinhardt system in the whole space} 

\author[ ]{Marius Ghergu}

\affil[ ]{School of Mathematics and Statistics}
\affil[ ]{University College Dublin }
\affil[ ]{Belfield Campus, Dublin 4}
\affil[ ]{E-mail: {\tt marius.ghergu@ucd.ie}}

\maketitle

\begin{abstract} 
We are concerned with the study of positive solutions to the Gierer-Meinhardt system
$$
\begin{cases}
\displaystyle   -\Delta u+\lambda u=\frac{u^p}{v^q}+\rho(x) &\quad\mbox{ in }\R^N\, , N\geq 3,\\[0.1in]
\displaystyle   -\Delta v+\mu v=\frac{u^m}{v^s} &\quad\mbox{ in }\R^N,\\[0.1in]
\end{cases}
$$
which satisfy $u(x), v(x)\to 0$ as $|x|\to \infty$.
In the above system $p,q,m,s>0$, $\lambda, \mu\geq 0$ and $\rho\in C(\R^N)$, $\rho\geq 0$. It is a known fact that posed in a smooth and bounded domain of $\R^N$, the above system subject to homogeneous Neumann boundary conditions has positive solutions if $p>1$ and $\sigma=\frac{mq}{(p-1)(s+1)}>1$. In the present work we emphasize a different phenomenon:
we see that for $\lambda, \mu>0$ large,  positive solutions with exponential decay exist if $0< \sigma\leq 1$.  Further, for $\lambda=\mu=0$ we derive various existence and nonexistence results and underline the role of the critical exponents $p=\frac{N}{N-2}$ and $p=\frac{N+2}{N-2}$.
\end{abstract}

\noindent{\bf Keywords: Gierer-Meinhardt system; steady state solutions; existence and nonexistence} 

\medskip

\noindent{\bf 2020 AMS MSC: 35J47, 35B53, 35C15, 35B45} 


\section{Introduction and the main results}

In 1972, Gierer and Meinhardt \cite{GM72} derived a mathematical model to account for {\it morphogenesis} in hydra, a tissues regeneration process through controlling the spatial distribution of cells. The morphogenesis was first observed by Trembley \cite{T44} in 1744 and is directed by two types of factors - chemical molecules and mechanical forces. Based on the findings of Turing \cite{T52} in 1952, the Gierer and Meinhardt model assumes the existence of two chemical substances (morphogens): a slowly diffusing  {\it activator} and a fast diffusing {\it inhibitor}. 
Their original model in \cite{GM72} reads as

$$
(GM)\quad 
\begin{cases}
\displaystyle   u_t=d\Delta u-\lambda u+\frac{u^p}{v^q}+\rho(x) &\quad\mbox{ in }\Omega\times (0, T),\\[0.1in]
\displaystyle   v_t=D\Delta v-\mu v+\frac{u^m}{v^s} &\quad\mbox{ in }\Omega\times (0, T),\\[0.1in]
\displaystyle \frac{\partial u}{\partial \nu}=\frac{\partial v}{\partial \nu}=0&\quad\mbox{ on }\partial\Omega\times (0, T),\\[0.1in]
u(x,0)=u_0(x), v(x, 0)=v_0(x)&\quad\mbox{ in }\Omega,\\[0.1in]
\end{cases}
$$
where $\Omega\subset\R^N$ is a smooth and bounded domain, $\nu$ denotes the outer unit normal vector at $\partial\Omega$, $\lambda, \mu\geq 0$ and $\rho\in C(\overline\Omega)$ is a nonnegative function. The nonnegative solution $(u,v)$ of the above system defines the concentrations of the activator and inhibitor respectively while $\rho$ is related to  the basic production rate of the activator.
Finally,  the exponents $p, q,m,s$  are assumed to satisfy
$$
p>1\,, \quad q>0\,,\quad m>0\,\quad \mbox{ and }\quad s>-1.
$$
The inhibitor diffuses much faster compared to the activator, which means that the diffusion coefficients $d$ and $D$ satisfy $D\gg d>0$. 

As explained in \cite{KS17}, the dynamics of the Gierer-Meinhardt system is  characterised by two numbers: 
\begin{itemize}
\item the net self-activation index $(p-1)/m$ which is responsible for the strength of self-activation of the activator;
\item the net cross-inhibition index $q/(s+1)$ which measures how strongly the inhibitor suppresses the production of the activator and that of itself.
\end{itemize}  
From biological interpretations of the above exponents, it is assumed that 
$$
0<\frac{p-1}{m}<\frac{q}{s+1},
$$
that is, 
$$
\sigma:=\frac{mq}{(p-1)(s+1)}>1.
$$
Under the assumption $\sigma>1$ it was obtained in \cite{J06} the existence of global in time solutions to $(GM)$; see also \cite{CSX17, DKP07,  JN07, KPW21, LPS17, NST06}. As already emphasised in \cite{NST06}, various existence and nonexistence results on the system $(GM)$ can be derived according to the case where the basic production rate $\rho$ of the activator is identically zero or not.  

One particular perspective in understanding the structure of $(GM)$ is given by the study of the {\it shadow system}, an approach introduced by Keener \cite{K78}. More precisely, letting $D\to \infty$ in $(GM)$ we deduce $v(x,t)=w(t)$, $t\in (0, T)$ and so we are lead to consider:
$$
\begin{cases}
\displaystyle   u_t=d\Delta u-\lambda u+\frac{u^p}{w^q}+\rho(x) &\quad\mbox{ in }\Omega\times (0, T),\\[0.1in]
\displaystyle   w_t=-\mu w+\frac{u^m}{w^s}   &\quad\mbox{ in }\Omega\times (0, T),\\[0.1in]
\displaystyle \frac{\partial u}{\partial \nu}=0&\quad\mbox{ on }\partial\Omega\times (0, T),\\[0.1in]
u(x,0)=u_0(x), w(0)=w_0&\quad\mbox{ in }\Omega.\\[0.1in]
\end{cases}
$$
Various results on the shadow system of $(GM)$ are obtained in  \cite{DGKZ22, KS17, M06, M07, WXZZ17}. 

\medskip

In the present work we investigate the steady state solutions of the Gierer-Meinhardt system $(GM)$ in the whole space. Precisely, we consider the system
\begin{equation}\label{GM0}
\begin{cases}
\displaystyle   -\Delta u+\lambda u=\frac{u^p}{v^q}+\rho(x) &\quad\mbox{ in }\R^N\, , N\geq 3,\\[0.1in]
\displaystyle   -\Delta v+\mu v=\frac{u^m}{v^s} &\quad\mbox{ in }\R^N,\\[0.1in]
u(x), v(x)\to 0\mbox{ as }|x|\to \infty,
\end{cases}
\end{equation}
where $\rho\in C(\R^N)$, $\rho\geq 0$, $p,q,m,s>0$ and $\lambda, \mu\geq 0$. We are looking for positive classical solutions of \eqref{GM0}, that is, functions $u, v\in C^2(\R^N)$ such that $u, v>0$ and satisfy \eqref{GM0} at every point of $\R^N$. 
In a smooth and bounded domain, the above system was discussed in \cite{G09, GR08, JN07, KW08, PSW17, WW99} under various boundary conditions. In \cite{KWY13} the following related system in $\R^3$ was investigated:
\begin{equation}\label{WW1}
\begin{cases}
\displaystyle   -\varepsilon \Delta u+ u=\frac{u^2}{v}&\quad\mbox{ in }\R^3,\\[0.1in]
\displaystyle   -\Delta v+ v=u^2 &\quad\mbox{ in }\R^3,\\[0.1in]
u(x), v(x)\to 0\mbox{ as }|x|\to \infty.
\end{cases}
\end{equation}
It is obtained in \cite{KWY13} that for $\varepsilon>0$ sufficiently small, the system \eqref{WW1} admits a positive radial solution $(u,v)$ such that $u$ and $v$ decay exponentially at infinity. Similar results in dimension $N=2$ were previously obtained  in \cite{DKW03} and \cite{NW06}.

Let us note that if $(u,v)$ is a positive solution to \eqref{WW1}, then $(U, V)=(\varepsilon^{-1} u, \varepsilon^{-2} v)$ is a solution to our system \eqref{GM0} for $\lambda=\varepsilon^{-1}$, $\mu=\varepsilon^{-2}$, $\rho\equiv 0$ and $p=2$, $q=1$, $m=2$, $s=0$. In this case $\sigma=1$.
 
Turning to our system \eqref{GM0}, we discuss the existence and nonexistence of solutions when $\rho\geq 0$.  We shall see that only the case $p>1$ is relevant to our study  as for $0<p\leq 1$ no positive solutions to \eqref{GM0} exist (see Theorem \ref{th1} below). Furthermore, the existence of solutions which decay exponentially at infinity  is closely related to the conditions $p>1$ and $\sigma\leq 1$ as for \eqref{WW1}.

Throughout this paper, for two positive functions $f, g\in C(\R^N)$ we use  the symbol $f(x)\simeq g(x)$ to denote that the quotient $f/g$ is bounded between two positive constants in a neighbourhood of infinity. Also, we say that $f\in C(\R^N)$ decays exponentially at infinity if $f(x)\simeq e^{-a|x|}$ for some $a>0$. 

Our first result on \eqref{GM0} concerns the case $\lambda, \mu>0$ and reads as follows. 

\begin{theorem}\label{th1}
Assume $\lambda, \mu>0$.

\begin{enumerate}
\item[{\rm (i)}] If $0<p\leq 1$ then \eqref{GM0} has no positive solutions.

\item[{\rm (ii)}] If $p>1$, $\sigma>1$ and $\mu >  \big(\frac{m}{s+1}\big)^2\lambda$, then the system \eqref{GM0} has no positive solutions $(u,v)$ with $u$ decaying exponentially at infinity.

\item[{\rm (iii)}] Assume $p>1\geq \sigma$, $\rho>0$ and $\rho(x)\simeq e^{-a|x|}$ for some $a>0$. Then, there exist two positive constants $C_1, C_2>0 $ depending on $p,q,m,s$ and $\rho$ such that the system \eqref{GM0} has positive solutions $(u,v)$ decaying exponentially at infinity provided that
\begin{equation}\label{lamm}
C_1\leq \mu\leq C_2 \lambda ^{\frac{p(s+1)}{q}-m}.
\end{equation}
\end{enumerate}
\end{theorem}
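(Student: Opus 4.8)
The plan is a two-step fixed point argument which takes seriously the fact that the exponential decay rates of $u$ and $v$ are \emph{forced}: given $u$ one first solves the (scalar, well-posed) $v$-equation for $v=v[u]$, and then applies Schauder's theorem to the resulting scalar equation for $u$ in a small order interval around the particular solution $\Phi:=G_\lambda*\rho$ of $-\Delta\Phi+\lambda\Phi=\rho$. Here $G_\lambda,G_\mu$ denote the Bessel potentials of $-\Delta+\lambda$ and $-\Delta+\mu$. Let $a>0$ be the decay rate of $\rho$, so $c_0\,e^{-a|x|}\le\rho(x)\le C_0\,e^{-a|x|}$ on $\R^N$. Since \eqref{lamm} can be satisfied only for $\lambda$ large --- note $\frac{p(s+1)}{q}-m>0$, because $\sigma\le1$ forces $m\le\frac{(p-1)(s+1)}{q}<\frac{p(s+1)}{q}$ --- we may and do assume $a<\sqrt\lambda$, and then standard two-sided estimates for $G_\lambda$ give $\underline c\,\lambda^{-1}e^{-a|x|}\le\Phi(x)\le\bar c\,\lambda^{-1}e^{-a|x|}$. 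Put $b:=\frac{ma}{s+1}$; the constant $C_1$ will be taken large enough (depending only on $m,s,\rho$) that $\mu\ge C_1$ forces $b<\sqrt\mu$. The algebraic fact behind the whole construction is: once $u\simeq e^{-a|x|}$, the second equation forces $v\simeq e^{-b|x|}$ (from the balance $ma-sb=b$), and then $u^p/v^q\simeq e^{-(pa-qb)|x|}$ with $pa-qb\ge a$, this last inequality being exactly equivalent to $\sigma\le1$. Hence the feedback term in the first equation decays at least as fast as $u$ and $\rho$, which is what allows the scheme to close.

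\emph{Step 1 (the $v$-equation).} For $u$ in the order interval $\mathcal I:=\{w:\Phi\le w\le2\Phi\}$, regarded inside a weighted space $X$ of continuous functions with a fixed exponential weight, the scalar problem $-\Delta v+\mu v=u^m/v^s$ in $\R^N$, $v\to0$, has a unique positive solution $v=v[u]$. Existence: a small multiple $\varepsilon_1 W$ (a subsolution) and a large multiple $\varepsilon_2 W$ (a supersolution) of $W:=G_\mu*e^{-b|\cdot|}\simeq\mu^{-1}e^{-b|x|}$ bracket a solution, the admissible $\varepsilon_i$ being read off from the bounds on $\Phi$, on $W$ and from $ma-sb=b$. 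Uniqueness follows from the maximum principle, since $t\mapsto t^{-s}$ is decreasing and hence any subsolution lies below any supersolution. One records the uniform two-sided bound $v[u]\simeq(\lambda^m\mu)^{-1/(s+1)}e^{-b|x|}$ for $u\in\mathcal I$, and that $u\mapsto v[u]$ is order preserving and continuous on $X$ (stability of solutions, interior elliptic estimates, uniform decay).

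\emph{Step 2 (the $u$-equation).} Define $\mathcal S(u):=G_\lambda*\big(u^p v[u]^{-q}+\rho\big)$ on $\mathcal I$; a fixed point $u_*$ of $\mathcal S$ gives the desired solution $(u_*,v[u_*])$ of \eqref{GM0}. The lower bound is automatic: $\mathcal S(u)\ge G_\lambda*\rho=\Phi$. For $\mathcal S(u)\le2\Phi$ it suffices that $G_\lambda*\big(u^p v[u]^{-q}\big)\le\Phi$; using $u\le2\Phi$, the lower bound $v[u]\ge c_v(\lambda^m\mu)^{-1/(s+1)}e^{-b|x|}$ and $pa-qb\ge a$, one gets $u^p v[u]^{-q}\le C\,\lambda^{-p}(\lambda^m\mu)^{q/(s+1)}e^{-a|x|}$, hence
\[
G_\lambda*\big(u^p v[u]^{-q}\big)(x)\ \le\ C'\,\lambda^{-1}\,\lambda^{-p}\,(\lambda^m\mu)^{q/(s+1)}\,e^{-a|x|},
\]
which is $\le\Phi$ (as $\Phi\gtrsim\lambda^{-1}e^{-a|x|}$) exactly when $(\lambda^m\mu)^{q/(s+1)}\lesssim\lambda^{p}$, i.e. $\mu^{q/(s+1)}\lesssim\lambda^{\,p-mq/(s+1)}$, i.e. $\mu\le C_2\,\lambda^{\frac{p(s+1)}{q}-m}$ --- precisely the upper bound in \eqref{lamm}. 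Thus, under \eqref{lamm}, $\mathcal S(\mathcal I)\subseteq\mathcal I$. Since $\mathcal I$ is closed, bounded and convex in $X$, and $\mathcal S$ is continuous and compact (continuity of $u\mapsto v[u]$, interior Schauder estimates, and the uniform exponential decay of $u^p v[u]^{-q}+\rho$, giving equicontinuity and tightness), Schauder's fixed point theorem yields $u_*\in\mathcal I$, and $(u_*,v[u_*])$ is a positive classical solution of \eqref{GM0} with $u_*\simeq e^{-a|x|}$ and $v[u_*]\simeq e^{-b|x|}$.

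The main obstacle is the quantitative, parameter-dependent control of the Bessel convolutions, namely $G_\nu*(e^{-c|\cdot|})\simeq\nu^{-1}e^{-\min(c,\sqrt\nu)|x|}$ with the correct power of $\nu$, valid whenever $c$ stays away from $\sqrt\nu$ (here $a<\sqrt\lambda$, $b<\sqrt\mu$, and $pa-qb\ge a$; the borderline $\sigma=1$, where $pa-qb=a$, remains admissible). Every relation between $\lambda$ and $\mu$ --- in particular \eqref{lamm} --- is extracted from these estimates, and their two-sided character is needed both to locate $v[u]$ in Step 1 and to verify $\mathcal S(\mathcal I)\subseteq\mathcal I$ in Step 2. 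Once the kernel bounds and the well-posedness and monotonicity of $u\mapsto v[u]$ are in hand, the inclusion $\mathcal S(\mathcal I)\subseteq\mathcal I$ and the compactness of $\mathcal S$ are routine, and no distinction between $\sigma<1$ and $\sigma=1$ is required.
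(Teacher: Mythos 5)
Your proposal addresses only part (iii) of the theorem; parts (i) and (ii) are not discussed at all, so as a proof of the stated result there is a real gap. For the record, the paper handles (i) by testing the $u$-equation against the principal Dirichlet eigenfunction of an annulus $B_{2R}\setminus \overline B_R$ and letting $R\to\infty$ (using $\lambda_1(R)\simeq R^{-2}$ while $v^{-q}\to\infty$), and it handles (ii) by combining the integral representation \eqref{eqG1} with the exact decay rate of $v$ supplied by Lemma \ref{eax}, thereby forcing $\sigma\le1$ --- a contradiction.

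For part (iii) your route is genuinely different from the paper's and, modulo the convolution estimate you flag, it is sound. The paper runs a Schauder argument jointly in $(u,v)$ on the product band $\Sigma$ of \eqref{eqss}, with four free constants $\underline M_i,\overline M_i$, and closes the invariance via pointwise sub/super-solution comparisons derived from the differential inequalities of Lemma \ref{funcw}; compactness is recovered by solving on balls $B_n$ and passing to a diagonal limit. You instead \emph{reduce} to a scalar fixed point for $u$ alone, absorbing the second equation into the map $u\mapsto v[u]$ and iterating inside the thin order interval $[\Phi,2\Phi]$ around $\Phi=\mathcal G_\lambda*\rho$; the lower barrier $\mathcal S(u)\ge\Phi$ then comes for free and only the upper inclusion needs the parameter balance, which is exactly \eqref{lamm}. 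This is conceptually cleaner and shows transparently where \eqref{lamm} comes from, but it trades the paper's elementary pointwise inequalities $(\lambda-a^2)W_a\le-\Delta W_a+\lambda W_a\le(\lambda+Na)W_a$ for the two-sided, $\nu$-uniform convolution asymptotics $\mathcal G_\nu*e^{-c|\cdot|}\simeq\nu^{-1}e^{-c|x|}$ for $c$ bounded away from $\sqrt\nu$. You acknowledge this as ``the main obstacle'' but do not prove it; it does follow from $\int\mathcal G_\nu=\nu^{-1}$ together with the scaling $\mathcal G_\nu(x)=\nu^{N/2-1}\mathcal G_1(\sqrt\nu\,x)$ and the exponential tail of $\mathcal G_1$, but that argument should be supplied, since every quantitative constant in your Step 2 (in particular $C_2$ depending only on $p,q,m,s,\rho$ and not on $\lambda$) rests on it. Likewise, compactness of $\mathcal S$ on the weighted space over all of $\R^N$ is asserted rather than argued; the paper avoids the issue via the $B_n$/diagonal device, and if you work directly in a weighted $C$-space you should make the Arzel\`a--Ascoli argument (local elliptic estimates plus the uniform exponential tail from $\mathcal S(u)\le2\Phi$) explicit. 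Finally, note the paper's invariance lemma needs $\lambda>\max\{2a^2,N^2\}$, $\mu>\max\{2b^2,N^2\}$ in addition to $\mu>b^2$; your $C_1$ should be taken large enough to encode the analogous restriction on $\mu$, and the implicit largeness of $\lambda$ coming from $C_1\le C_2\lambda^{\,p(s+1)/q-m}$ must be made explicit so that the convolution asymptotics are in force.
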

The constants $C_1$ and $C_2$ are described explicitly in the proof of Theorem \ref{th1} which we present in Section \ref{pt1}.
Observe that since $\sigma\leq 1$ one has
$$
\frac{p(s+1)}{q}-m=\frac{s+1}{q}+m\Big(\frac{1}{\sigma}-1\Big)>0,
$$
so the exponent of $\lambda$ in \eqref{lamm} is positive. In particular, \eqref{lamm} states that solutions of \eqref{GM0} with exponential decay exist if $\lambda, \mu$ are large enough. 

In the following we take $\lambda=\mu=0$ in \eqref{GM0} and derive a different set of result. We shall thus investigate the system
\begin{equation}\label{GM}
\begin{cases}
\displaystyle   -\Delta u=\frac{u^p}{v^q}+\rho(x)  & \mbox{ in }\R^N,\\[0.1in]
\displaystyle   -\Delta v=\frac{u^m}{v^s} & \mbox{ in }\R^N,\\[0.1in]
u(x), v(x)\to 0\mbox{ as }|x|\to \infty,
\end{cases}
\end{equation}

We first state the following nonexistence result. 
\begin{theorem}\label{thgs} The systems \eqref{GM} has no positive solutions in any of the following cases:

\begin{enumerate}
\item[{\rm (i)}] $0<p\leq \frac{N}{N-2}$ or $0<m\leq 2/(N-2)$.
\item[{\rm (ii)}] $\displaystyle \int_{\R^N} \rho(x)|x|^{2-N} dx=\infty$.
\item[{\rm (iii)}] $\displaystyle \int_{\R^N}|x|^{2-N} \Big(\int_{\R^N} \frac{\rho(y)}{|x-y|^{N-2}}dy\Big)^m dx=\infty$.

\item[{\rm (iv)}] $\frac{N}{N-2}<p<\frac{N+2}{N-2}$, $\rho\equiv 0$ and there exists $C>0$  such that 
\begin{equation}\label{convr}
|\nabla v(x)|\leq \frac{C }{|x|} v(x)\quad\mbox{ for all $|x|>1$ large.}
\end{equation}
\end{enumerate}
\end{theorem}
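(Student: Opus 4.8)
\medskip

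The plan is to establish the four nonexistence statements by combining integral (Pohozaev-type / test-function) identities with the fact that a bounded positive superharmonic function on $\R^N$, $N\geq 3$, that tends to $0$ at infinity is bounded below by a multiple of its Newtonian potential, together with sharp decay estimates for such potentials.

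\medskip

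\textbf{Parts (i), (ii), (iii).} For any positive solution $(u,v)$ of \eqref{GM}, both $u$ and $v$ are positive superharmonic functions vanishing at infinity, hence by the representation formula they dominate their Riesz potentials: there is $c_N>0$ with
\[
u(x)\geq c_N\int_{\R^N}\frac{1}{|x-y|^{N-2}}\Big(\frac{u^p}{v^q}+\rho\Big)(y)\,dy,\qquad
v(x)\geq c_N\int_{\R^N}\frac{1}{|x-y|^{N-2}}\,\frac{u^m}{v^s}(y)\,dy .
\]
Since $u,v\to 0$ at infinity, $\inf_{B_R}v>0$ on every ball, so on a fixed ball $B_1$ we get $u(x)\geq c\int \rho(y)|x-y|^{2-N}dy$ and, feeding this into the second formula, $v(x)\geq c\int |x-y|^{2-N}\big(\int \rho(z)|y-z|^{2-N}dz\big)^m dy$; if either of these integrals diverges (cases (ii), (iii)) we contradict the finiteness of $u$ resp.\ $v$ at a point. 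For part (i): drop $\rho$ and iterate the two inequalities to obtain, for $|x|$ large, lower bounds of the form $u(x)\gtrsim |x|^{-(N-2)}$ and $v(x)\gtrsim |x|^{-(N-2)}$ (from the constant-order behaviour of the Newtonian potential of an integrable-looking but in fact non-integrable source); then a standard bootstrap shows that if $p\leq N/(N-2)$ the source $u^p/v^q\geq c\,u^p$ is too large near infinity for its potential to stay bounded, and symmetrically $m\leq 2/(N-2)$ makes $u^m/v^s\geq c\,u^m$ force $v$ to blow up. Concretely one shows $\int_{\R^N} u^p\,|x|^{2-N}dx=\infty$ (resp.\ the analogous integral with $u^m$), which is impossible since it is $\lesssim u(0)$. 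The precise exponent arithmetic here is the routine part.

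\medskip

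\textbf{Part (iv).} Here $\rho\equiv0$, $\tfrac{N}{N-2}<p<\tfrac{N+2}{N-2}$, and we are given the gradient bound \eqref{convr}. The strategy is a Pohozaev-Rellich identity on balls $B_R$ applied to the first equation $-\Delta u=u^p v^{-q}$. Multiplying by $x\cdot\nabla u$ and integrating over $B_R$ yields
\[
\Big(\frac{N-2}{2}\Big)\int_{B_R}|\nabla u|^2
-\frac{N}{p+1}\int_{B_R}\frac{u^{p+1}}{v^q}
=\;\text{(boundary terms on }\partial B_R)\;+\;\frac{q}{p+1}\int_{B_R}\frac{u^{p+1}}{v^{q+1}}\,(x\cdot\nabla v).
\]
On the left, since $p<\tfrac{N+2}{N-2}$ we have $\tfrac{N-2}{2}-\tfrac{N}{p+1}>0$, so both terms have a favourable sign. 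The boundary terms on $\partial B_R$ are controlled by the decay of $u$ and $|\nabla u|$ (which, as superharmonic functions vanishing at infinity, decay at least like $|x|^{2-N}$, with gradient decay from standard elliptic estimates), so along a suitable sequence $R_k\to\infty$ they tend to $0$. The crucial error term is $\int \frac{u^{p+1}}{v^{q+1}}(x\cdot\nabla v)$: using the hypothesis \eqref{convr}, $|x\cdot\nabla v|\leq C v$, so this term is bounded by $C\int_{B_R}\frac{u^{p+1}}{v^q}$, i.e.\ it is absorbed into the (negative) second term on the left provided $\frac{N}{p+1}+\frac{Cq}{p+1}$ still leaves $\frac{N-2}{2}$ dominant — more carefully, one combines the identity with the energy identity $\int_{B_R}|\nabla u|^2=\int_{B_R}\frac{u^{p+1}}{v^q}+\text{bdry}$ (from multiplying the equation by $u$) to eliminate $\int|\nabla u|^2$ and derive $\big(\tfrac{N-2}{2}-\tfrac{N}{p+1}\big)\int_{\R^N}\frac{u^{p+1}}{v^q}\leq \tfrac{Cq}{p+1}\int_{\R^N}\frac{u^{p+1}}{v^q}$, hence $\int_{\R^N}u^{p+1}v^{-q}\big(\tfrac{N-2}{2}-\tfrac{N}{p+1}-\tfrac{Cq}{p+1}\big)\le 0$; if the constant $C$ in \eqref{convr} is the one making this work the integral vanishes, forcing $u\equiv0$, a contradiction. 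I expect the main obstacle to be making the boundary-term analysis rigorous: one must show the decay rates $u(x)\lesssim|x|^{2-N}$, $|\nabla u(x)|\lesssim|x|^{1-N}$ genuinely hold (via the Newtonian potential representation and interior gradient estimates, using that the right-hand side $u^pv^{-q}$ is integrable against $|x|^{2-N}$ by part of the bootstrap), and then check that the boundary integrals — which scale like $R^{\,N-1}\cdot R^{2-N}\cdot R^{1-N}=R^{\,2-N}\to0$ and similar — indeed vanish in the limit, so that no sequence-extraction pathology spoils the identity. Getting the constant bookkeeping in \eqref{convr} to line up exactly with $\tfrac{N-2}{2}-\tfrac{N}{p+1}$ is the delicate point; everything else is a standard Pohozaev computation.
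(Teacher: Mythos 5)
Your treatment of parts (ii) and (iii) matches the paper's: evaluate the Newtonian-potential representation of $u$ (resp.\ $v$) at a single point and observe the hypothesis forces that potential to be infinite. That part is fine.

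For part (i), however, your claim that $\int_{\R^N} u^p|x|^{2-N}\,dx=\infty$ follows by "routine exponent arithmetic" glosses over the real difficulty. The only pointwise bound you can extract directly from superharmonicity is $u\gtrsim|x|^{2-N}$, and feeding that into $u^p$ yields divergence only for $p\le 2/(N-2)$, not for the asserted range $p\le N/(N-2)$. To close the gap one must run a genuine bootstrap (or a rescaled test-function / averaging argument), and that is precisely the content of the Serrin-exponent nonexistence theorem. The paper sidesteps this entirely: since $v\to 0$ at infinity, $v^{-q}\ge c>0$, so $u$ satisfies $-\Delta u\ge c\,u^p$ and one can simply cite Lemma \ref{arms} (Armstrong--Sirakov). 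For the $m$-branch, the naive bound $u^m\gtrsim|x|^{-m(N-2)}$ with $m(N-2)\le 2$ \emph{does} suffice, which is essentially Lemma \ref{zz}(i) in the paper — so that half of (i) is fine as you wrote it.

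For part (iv) your direct Pohozaev argument has a genuine gap. After combining the Pohozaev and energy identities you arrive at an inequality of the form
$\big(\tfrac{N}{p+1}-\tfrac{N-2}{2}\big)\int \tfrac{u^{p+1}}{v^{q}}\le \tfrac{Cq}{p+1}\int\tfrac{u^{p+1}}{v^{q}}+\text{(bdry)}$
(note your displayed inequality has the sign of the Pohozaev coefficient flipped; with $p<\tfrac{N+2}{N-2}$ the coefficient $\tfrac{N-2}{2}-\tfrac{N}{p+1}$ is \emph{negative}, so the inequality as you wrote it is vacuous). Even with the sign corrected, the argument only gives a contradiction if the constant $C$ in \eqref{convr} is small enough to be absorbed — but $C$ is an arbitrary positive constant supplied by the hypothesis, not one you get to choose. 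You flag this yourself ("if the constant $C$ \dots is the one making this work"), and that is exactly the failure point: the argument does not close. The paper's route avoids this entirely. It writes the first equation as $-\Delta u=h(x)u^p$ with $h=v^{-q}$, and then makes the crucial observation (which your plan misses) that $h$ is \emph{subharmonic}:
$\Delta h=q(q+1)v^{-q-2}|\nabla v|^2+q\,v^{-q-1}(-\Delta v)\ge 0,$
using the second equation. Together with $h\ge c>0$ (from $v\to 0$) and the hypothesis $|\nabla h|\le \tfrac{C}{|x|}h$, the three hypotheses of Gidas--Spruck's Theorem 4.1 (Lemma \ref{gspruck}) are met, and that theorem — whose proof handles an arbitrary constant $C$ via integral/test-function estimates, not a bare Pohozaev absorption — yields $u\equiv 0$. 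In effect you were trying to reprove Gidas--Spruck from scratch without the machinery that makes it constant-insensitive.
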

Part (iv) in Theorem \ref{thgs} above is a consequence of Theorem 4.1 in Gidas and Spruck \cite{GS81}. The result below  provides sufficient conditions for which \eqref{convr} holds. 

\begin{corollary}\label{cor1}
Assume $\frac{N}{N-2}<p<\frac{N+2}{N-2}$ and $\rho\equiv 0$. Then, the system \eqref{GM} has no positive solutions $(u,v)$ which satisfy one of the conditions below:
\begin{enumerate}
\item[{\rm (i)}] There exists $0<a<\frac{(N-2)s+N}{m}$ such that $u(x)\simeq |x|^{-a}$.
\item[{\rm (ii)}] $u$ and $v$ are radial. 
\end{enumerate}

\end{corollary}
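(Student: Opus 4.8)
The plan is to reduce both statements to Theorem \ref{thgs}(iv) by verifying that the gradient bound \eqref{convr} on $v$ holds; once \eqref{convr} is established, nonexistence of positive solutions of \eqref{GM} is immediate from that theorem. Case (ii) is the quick one. If $u$ and $v$ are radial, write $v=v(r)$, $r=|x|$; the second equation becomes $-(r^{N-1}v')'=r^{N-1}u^m v^{-s}>0$ for $r>0$, so $r^{N-1}v'(r)$ is strictly decreasing and, since it vanishes as $r\to0^{+}$ (because $v\in C^2$ and, by radial symmetry, $v'(0)=0$), we get $v'(r)<0$ for all $r>0$. Hence $-t^{N-1}v'(t)$ is positive and nondecreasing, so $-v'(t)\ge(-r^{N-1}v'(r))\,t^{1-N}$ for $t\ge r$; integrating on $(r,\infty)$ and using $v(t)\to0$ gives
\[
v(r)=\int_r^\infty(-v'(t))\,dt\ \ge\ \frac{-r^{N-1}v'(r)}{N-2}\,r^{2-N}\ =\ \frac{-r\,v'(r)}{N-2},
\]
i.e. $|\nabla v(x)|=|v'(|x|)|\le\frac{N-2}{|x|}\,v(x)$, which is \eqref{convr} with $C=N-2$.

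For case (i) I would first exploit that, since $\rho\equiv0$ and $u,v>0$, both $u$ and $v$ are positive superharmonic functions on $\R^N$ tending to $0$ at infinity, hence bounded; by the Riesz representation the greatest harmonic minorant of each is a bounded harmonic function on $\R^N$, therefore a constant, and being dominated by a function tending to $0$ it must be $0$. Thus
\[
u(x)=c_N\!\int_{\R^N}\frac{u(y)^p v(y)^{-q}}{|x-y|^{N-2}}\,dy,\qquad
v(x)=c_N\!\int_{\R^N}\frac{u(y)^m v(y)^{-s}}{|x-y|^{N-2}}\,dy,
\]
both integrals being finite. Restricting the first integral to $y\in B_1(0)$ gives $u(x)\ge c\,|x|^{2-N}$ for large $|x|$. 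Next I would note that necessarily $am>2$: since $v\to0$ we have $v(y)^{-s}\ge1$ for $|y|$ large, so $u^mv^{-s}\ge u^m\simeq|y|^{-am}$ there, and if $am\le2$ the integral defining $v(0)$ would diverge at infinity, contradicting $v(0)<\infty$. Then set $b:=\frac{am-2}{1+s}$; one checks that $b>0$, that the hypothesis $0<a<\frac{(N-2)s+N}{m}$ is \emph{exactly} equivalent to $b<N-2$, and that $am-bs=b+2\in(2,N)$.

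With $0<b<N-2$ fixed as above I would build power-type barriers for $v$ on an exterior domain. Using $u\simeq|x|^{-a}$, one has $c_2|x|^{-am}\le u(x)^m\le C_2|x|^{-am}$ for $|x|\ge R$, while $-\Delta(|x|^{-b})=b(N-2-b)|x|^{-b-2}$ and $|x|^{-b-2}=|x|^{bs-am}$ by the choice of $b$; hence for $A$ large and $\delta$ small, $\bar v:=A|x|^{-b}$ and $\underline v:=\delta|x|^{-b}$ are respectively a super- and a subsolution of $-\Delta v=u^mv^{-s}$ on $\{|x|>R\}$. Adjusting $A$ and $\delta$ so that $\underline v\le v\le\bar v$ on $\{|x|=R\}$ (possible since $v$ is positive and continuous), the comparison principle for $-\Delta v=u^mv^{-s}$ — monotone in $v$ because $\xi\mapsto-\xi^{-s}$ is increasing — together with $v,\bar v,\underline v\to0$ at infinity, yields $\delta|x|^{-b}\le v(x)\le A|x|^{-b}$ for $|x|\ge R$. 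Finally, applying the interior gradient estimate for the Poisson equation on $B_{|x|/2}(x)$, where $-\Delta v=u^mv^{-s}$ with $\|u^mv^{-s}\|_{L^\infty(B_{|x|/2}(x))}\lesssim|x|^{bs-am}=|x|^{-b-2}$ and $\|v\|_{L^\infty(B_{|x|/2}(x))}\lesssim|x|^{-b}$, gives
\[
|\nabla v(x)|\ \lesssim\ \frac1{|x|}\bigl(|x|^{-b}+|x|^{2}\cdot|x|^{-b-2}\bigr)\ \lesssim\ \frac{|x|^{-b}}{|x|}\ \lesssim\ \frac{v(x)}{|x|},
\]
which is \eqref{convr}; Theorem \ref{thgs}(iv) then finishes the proof.

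The main obstacle is the matching two-sided bound $v\simeq|x|^{-b}$ in case (i): one must read off the correct exponent $b=(am-2)/(1+s)$ from the scaling of the second equation, verify that the admissible range of $a$ is precisely what keeps $|x|^{-b}$ superharmonic (so that it is a legitimate barrier and the Newtonian potentials converge), and justify the comparison principle on the exterior domain $\{|x|>R\}$ with the singular nonlinearity $v^{-s}$, using the common decay at infinity in place of a boundary condition there — a point that relies on $N\ge3$ and a minimum-principle/Liouville argument on exterior domains.
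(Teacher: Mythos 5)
Your proof is correct. For part (ii) your argument is essentially the paper's: integrate the radial equation to get $r^{N-1}v'(r)$ decreasing and conclude $v(r)\ge \frac{-r\,v'(r)}{N-2}$, hence \eqref{convr}.

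For part (i) you arrive at the same two intermediate facts as the paper — $am>2$, and $v(x)\simeq |x|^{-b}$ with $b=\frac{am-2}{s+1}\in(0,N-2)$ — but by a somewhat different packaging (exterior-domain power barriers and a comparison/minimum-principle argument, rather than invoking Lemma \ref{zz}(i)--(ii), which is itself proved by essentially the same sub/supersolution construction on all of $\R^N$). The genuinely different step is the derivation of \eqref{convr}: the paper differentiates the Newtonian representation $v(x)=c(N)\int g(y)|x-y|^{2-N}\,dy$, splits $\int g(y)|x-y|^{1-N}\,dy$ into the regions $|y|<|x|/2$, $|x|/2<|y|<2|x|$, $|y|>2|x|$, and bounds each by $Cv(x)/|x|$; you instead apply the interior gradient estimate for the Poisson equation on $B_{|x|/2}(x)$, using $\|v\|_{L^\infty}\lesssim|x|^{-b}$ and $\|u^m v^{-s}\|_{L^\infty}\lesssim|x|^{-b-2}$ there to obtain $|\nabla v(x)|\lesssim |x|^{-b-1}\lesssim v(x)/|x|$. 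Both are sound; the paper's argument is self-contained within the representation-formula framework it has set up (Lemma \ref{Glem0}, Lemma \ref{zz}), while yours is shorter once the two-sided decay of $v$ is in hand but imports the standard interior elliptic gradient estimate. One small point worth spelling out in your version: the exterior comparison with $v\to0$ at infinity should be run with the usual $\varepsilon$-perturbation (compare $\underline v-\varepsilon$ to $v$, etc.) so that the maximum is forced onto $\{|x|=R\}$; you flag this but do not carry it out.
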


We next turn to existence results for the system \eqref{GM}. Using a dynamical system approach, it is obtained in \cite{BG10} the existence of positive radial solutions to \eqref{GM} if  $\rho\equiv 0$.  In the following we shall discuss the case where $\rho>0$ satisfies 
\begin{equation}\label{roro}
\alpha (1+|x|^2)^{-a/2}\leq \rho(x) \leq \beta  (1+|x|^2)^{-a/2}\quad\mbox{ for all }x\in \R^N,
\end{equation}
for some $\beta>\alpha>0$. 

\begin{theorem}\label{thex} Assume $\rho$ satisfies \eqref{roro}.

\begin{enumerate}
\item[{\rm (i)}] If $0\leq a\leq 2\big(1+\frac{1}{m}\big)$ then \eqref{GM} has no solutions.

\item[{\rm (ii)}] If $2\big(1+\frac{1}{m}\big)< a<N$, $0<\sigma<1$ and the following conditions hold
\begin{equation}\label{ams}
m(a-2)<(N-2)s+N,
\end{equation}
\begin{equation}\label{eqaa}
\frac{2p}{p-1}\leq a+\sigma \Big\{2\Big(1+\frac{1}{m}\Big)-a\Big\},
\end{equation}
then, there exist $\varepsilon, \delta>0$ depending on $p, q,m,s$ and $a$ such that for
\begin{equation}\label{ed}
0<\alpha<\varepsilon \quad\mbox{ and }\quad \alpha<\beta<\delta \alpha^\sigma,
\end{equation}
the system \eqref{GM} has positive solutions.
\end{enumerate}
\end{theorem}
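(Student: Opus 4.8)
I would deduce all three sub-cases from Theorem~\ref{thgs}. If $m\le 2/(N-2)$ there is nothing to prove by Theorem~\ref{thgs}(i), so assume $m>2/(N-2)$; then $2(1+\tfrac1m)<N$, hence $a<N$ throughout the range under consideration. For $0\le a\le 2$, the lower bound $\rho(x)\ge\alpha(1+|x|^2)^{-a/2}$ gives $\int_{\R^N}\rho(x)|x|^{2-N}\,dx\ge c\int_1^\infty r^{1-a}\,dr=\infty$, so Theorem~\ref{thgs}(ii) applies. For $2<a\le 2(1+\tfrac1m)$ the only estimate to supply is $\int_{\R^N}\rho(y)|x-y|^{2-N}\,dy\ge c\,|x|^{2-a}$ for $|x|$ large, which comes from restricting that integral to $1\le|y|\le|x|/2$ (there $|x-y|\asymp|x|$ and, since $a<N$, $\int_1^{|x|/2}r^{N-1-a}\,dr\asymp|x|^{N-a}$). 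Substituting into Theorem~\ref{thgs}(iii) produces a lower bound $c\int^{\infty}r^{1+m(2-a)}\,dr$, which diverges precisely because $a\le 2+\tfrac2m$ forces $1+m(2-a)\ge-1$. Hence \eqref{GM} has no positive solution.

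\textbf{Part (ii): the ansatz.} Set $\phi_\gamma(x):=(1+|x|^2)^{-\gamma/2}$ and look for a solution with $u\simeq\phi_{\gamma_1}$, $v\simeq\phi_{\gamma_2}$, where
\[
\gamma_1:=a-2,\qquad \gamma_2:=\frac{m(a-2)-2}{s+1}.
\]
The hypotheses are tailored to this choice: $a>2(1+\tfrac1m)$ gives $\gamma_2>0$, $a<N$ gives $\gamma_1<N-2$, and \eqref{ams} gives $\gamma_2<N-2$, so $0<\gamma_1,\gamma_2<N-2$. A direct computation yields $-\Delta\phi_\gamma\simeq\phi_{\gamma+2}$, with explicit positive constants $c_1(\gamma)\le c_2(\gamma)$ depending only on $\gamma$ and $N$, and $\mathcal N(-\Delta\phi_\gamma)=\phi_\gamma$, where $\mathcal N$ denotes the Newtonian potential on $\R^N$. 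The algebra behind the scheme is that $\gamma_1+2=a$, $\gamma_2+2=m\gamma_1-s\gamma_2$, and that \eqref{eqaa} is equivalent to $p\gamma_1-q\gamma_2\ge a$, i.e.\ to $\phi_{\gamma_1}^{\,p}\,\phi_{\gamma_2}^{\,-q}\le\phi_a$.

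\textbf{Part (ii): reduction and fixed point.} For $u$ with $u\simeq\phi_{\gamma_1}$, the problem $-\Delta v=u^m v^{-s}$, $v>0$, $v\to0$, has a unique solution $v=\Psi(u)$ (uniqueness because $v\mapsto u^mv^{-s}$ is decreasing); moreover $\Psi$ is order preserving, obeys the scaling law $\Psi(tu)=t^{m/(s+1)}\Psi(u)$, and $\Psi(\phi_{\gamma_1})\simeq\phi_{\gamma_2}$ (using $d_1\phi_{\gamma_2}$ and $d_2\phi_{\gamma_2}$ as sub- and supersolutions). It thus suffices to solve the scalar equation $-\Delta u=u^p\Psi(u)^{-q}+\rho$, $u>0$, $u\to0$, which I would do by Schauder's theorem applied to the map $u\mapsto\mathcal N\!\big(u^p\Psi(u)^{-q}+\rho\big)$ on the convex set $K=\{\,t_0\phi_{\gamma_1}\le u\le t_1\phi_{\gamma_1}\,\}$ (compactness from interior elliptic estimates plus the uniform decay built into $K$). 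Using $\Psi(u)\ge t_0^{m/(s+1)}\Psi(\phi_{\gamma_1})$, the bound $\phi_{\gamma_1}^{\,p}\phi_{\gamma_2}^{\,-q}\le\phi_a$, and $-\Delta\phi_{\gamma_1}\simeq\phi_a$, one checks that $K$ is invariant provided
\[
t_0\le\frac{\alpha}{c_2(\gamma_1)}\qquad\text{and}\qquad c_1(\gamma_1)\,t_1\ \ge\ C\,\frac{t_1^{\,p}}{t_0^{mq/(s+1)}}+\beta ,
\]
for a constant $C=C(p,q,m,s,a,N)$. Taking $t_0=\alpha/c_2(\gamma_1)$, substituting $mq/(s+1)=\sigma(p-1)$ and putting $t_1=\alpha^\sigma\tau$ turns the second inequality into $c_1(\gamma_1)\tau-C'\tau^{\,p}\ge\beta\,\alpha^{-\sigma}$, whose left side (recall $p>1$) has a positive maximum $M$ over $\tau>0$; it is therefore solvable whenever $\beta<M\alpha^\sigma$, and then the remaining constraint $t_0\le t_1$ becomes $\alpha^{1-\sigma}\le c_2(\gamma_1)\tau$, i.e.\ $\alpha<\varepsilon$ (here $0<\sigma<1$ is essential). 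This is exactly \eqref{ed} with $\delta=M$. A fixed point $u$ provides the solution $\big(u,\Psi(u)\big)$, and the bounds in $K$ give positivity, decay to zero, and (after a bootstrap) $C^2$ regularity.

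\textbf{Main obstacle.} The computation $-\Delta\phi_\gamma\simeq\phi_{\gamma+2}$ and the Schauder machinery on $\R^N$ are routine. The substantive work is twofold: first, setting up the solution operator $\Psi$ rigorously — existence, uniqueness, continuity, monotonicity, the exact scaling law, and the two-sided estimate $\Psi(\phi_{\gamma_1})\simeq\phi_{\gamma_2}$; second, the constant-bookkeeping showing that the two displayed inequalities can be met together with the ordering $t_0\le t_1$, which is precisely where the scaling $\beta\lesssim\alpha^\sigma$ and the smallness $\alpha<\varepsilon$ in \eqref{ed} originate.
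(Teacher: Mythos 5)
Your proposal is correct and follows essentially the same route as the paper: nonexistence in part (i) via Theorem~\ref{thgs}(ii)--(iii) with the same $|x|^{2-a}$ lower bound for the convolution, and existence in part (ii) via Schauder on a set of functions pinched between multiples of $Z_{a-2}$ and $Z_b$ with $b=\frac{m(a-2)-2}{s+1}$, exploiting $\sigma<1$ in exactly the paper's way to balance $\alpha$ and $\beta$. The only (cosmetic) difference is that you decouple $v=\Psi(u)$ first and run Schauder on the scalar map $u\mapsto\mathcal N\big(u^p\Psi(u)^{-q}+\rho\big)$, while the paper iterates on the pair $(u,v)\mapsto(Tu,Tv)$ with $Tv$ solving the singular problem for the given $u$; both reductions require the same Lemma~\ref{zz}(ii)-type analysis and lead to the same constant system.
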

Since $a<N$, it is easy to check that condition \eqref{ams} holds if either $m\leq \frac{N}{N-2}$ or $m\leq s+1$. We also have:
\begin{corollary}\label{corex}
Assume $m\leq \min\{p-1, s+1\}$, $0<\sigma<1$ and $\rho$ satisfies \eqref{roro} and \eqref{ed} where $2\big(1+\frac{1}{m}\big)< a<N$. Then, the system \eqref{GM} has positive solutions.
\end{corollary}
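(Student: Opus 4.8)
The plan is to obtain Corollary \ref{corex} as a direct specialisation of Theorem \ref{thex}(ii): I would simply check that, under the hypotheses $m\leq\min\{p-1,s+1\}$, $0<\sigma<1$, $2\big(1+\frac{1}{m}\big)<a<N$, and $\rho$ satisfying \eqref{roro}--\eqref{ed}, all the hypotheses of Theorem \ref{thex}(ii) are in force. Since \eqref{roro}, \eqref{ed}, the range of $a$ and the bound $0<\sigma<1$ are assumed outright, the only missing pieces are the two inequalities \eqref{ams} and \eqref{eqaa}. I would first note in passing that $m>0$ and $m\leq p-1$ force $p>1$, so that $\frac{2p}{p-1}$ and $\sigma=\frac{mq}{(p-1)(s+1)}$ are well defined and positive, as required for Theorem \ref{thex}(ii) to make sense.

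For \eqref{ams} I would invoke the remark following Theorem \ref{thex} (condition \eqref{ams} holds whenever $m\leq s+1$), or argue directly: $a<N$ gives $a-2<N-2$, and since $a>2\big(1+\frac{1}{m}\big)>2$ we have $a-2>0$, so $m(a-2)\leq(s+1)(a-2)<(s+1)(N-2)=(N-2)s+N-2<(N-2)s+N$.

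The one computation worth doing carefully is \eqref{eqaa}. Rewriting its right-hand side as
\begin{equation*}
a+\sigma\Big\{2\Big(1+\tfrac{1}{m}\Big)-a\Big\}=(1-\sigma)\,a+2\sigma\Big(1+\tfrac{1}{m}\Big),
\end{equation*}
and using $1-\sigma>0$ together with $a>2\big(1+\frac{1}{m}\big)$, I get
\begin{equation*}
a+\sigma\Big\{2\Big(1+\tfrac{1}{m}\Big)-a\Big\}>(1-\sigma)\cdot2\Big(1+\tfrac{1}{m}\Big)+2\sigma\Big(1+\tfrac{1}{m}\Big)=2\Big(1+\tfrac{1}{m}\Big).
\end{equation*}
On the other hand $\frac{2p}{p-1}=2+\frac{2}{p-1}$, and $m\leq p-1$ yields $\frac{2}{p-1}\leq\frac{2}{m}$, so $\frac{2p}{p-1}\leq2\big(1+\frac{1}{m}\big)$. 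Chaining the two displays gives $\frac{2p}{p-1}\leq 2\big(1+\frac{1}{m}\big)<a+\sigma\big\{2(1+\frac{1}{m})-a\big\}$, i.e. \eqref{eqaa} holds (in fact strictly). With \eqref{ams} and \eqref{eqaa} verified, Theorem \ref{thex}(ii) applies and yields the positive solutions of \eqref{GM} claimed in the corollary.

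I do not anticipate any real difficulty here: the corollary is essentially a bookkeeping consequence of Theorem \ref{thex}(ii). The only point requiring a moment's thought is to recognise that the lower bound $a>2\big(1+\frac{1}{m}\big)$ imposed on the decay rate of $\rho$ is exactly what pushes the right-hand side of \eqref{eqaa} above $2\big(1+\frac{1}{m}\big)$, which then dominates $\frac{2p}{p-1}$ precisely because $m\leq p-1$; the hypothesis $m\leq s+1$ plays the parallel role for \eqref{ams}.
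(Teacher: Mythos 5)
Your proof is correct and follows essentially the same route as the paper's: verify \eqref{ams} from $m\leq s+1$ and $a<N$, and verify \eqref{eqaa} by noting that $\frac{2p}{p-1}\leq 2(1+\frac1m)\leq a+\sigma\{2(1+\frac1m)-a\}$, the second inequality coming from $a>2(1+\frac1m)$ and $0<\sigma<1$ via the convex-combination rewrite, then invoke Theorem \ref{thex}(ii). Your write-up is a bit more explicit (including the observation that $m\leq p-1$ forces $p>1$), but the argument is the same.
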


In relation to Theorem \ref{thex}(ii), we note that since $2\big(1+\frac{1}{m}\big)< a<N$, condition \eqref{eqaa} yields  
$$
\frac{2p}{p-1}\leq a<N\Longrightarrow p>\frac{N}{N-2}.
$$
In fact, the exponent $\frac{N}{N-2}$ is optimal in the following sense:
\begin{corollary}\label{cor2}
\begin{enumerate}
\item[{\rm (i)}] If $0<p\leq \frac{N}{N-2}$ then the system \eqref{GM} has no positive solutions for all $q,m,s>0$ and any  function $\rho\in C(\R^N)$, $\rho\geq 0$.
\item[{\rm (ii)}] Conversely, for all $p>\frac{N}{N-2}$ there exist $q,m, s>0$ and a continuous function $\rho\in C(\R^N)$, $\rho\geq 0$, such that \eqref{GM} has a positive solution.
\end{enumerate}
\end{corollary}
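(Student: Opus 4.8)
The plan is to obtain both parts of Corollary~\ref{cor2} as consequences of the results already established. Part~(i) requires no work: it is precisely the first alternative in Theorem~\ref{thgs}(i), which asserts nonexistence for \eqref{GM} whenever $0<p\le\frac{N}{N-2}$, for \emph{any} $q,m,s>0$ and \emph{any} $\rho\in C(\R^N)$, $\rho\ge0$. For part~(ii) I would fix $p>\frac{N}{N-2}$ and construct a quadruple $(q,m,s,a)$ together with a weight $\rho$ satisfying all the hypotheses of Theorem~\ref{thex}(ii).

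The starting observation is the elementary equivalence
\[
p>\frac{N}{N-2}\quad\Longleftrightarrow\quad \frac{2p}{p-1}<N ,
\]
which is what makes the construction possible, since it leaves the nonempty open interval $\big(\tfrac{2p}{p-1},N\big)$ available for the decay exponent $a$. I would then pick $m>p-1$; for such $m$ one has $2\big(1+\tfrac1m\big)<\tfrac{2p}{p-1}$ (again equivalent to $m>p-1$), so that \emph{any} choice $a\in\big(\tfrac{2p}{p-1},N\big)$ automatically satisfies the requirement $2\big(1+\tfrac1m\big)<a<N$ of Theorem~\ref{thex}(ii). Fix such an $a$.

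With $m$ and $a$ fixed, the remaining constraints \eqref{ams}, \eqref{eqaa} and $0<\sigma<1$ are met by choosing $s$ large and $q$ small. Condition \eqref{ams} reads $m(a-2)<(N-2)s+N$, whose left-hand side is now a fixed number, hence it holds once $s$ is large enough. Rewriting the right-hand side of \eqref{eqaa} as $a-\sigma\big(a-2(1+\tfrac1m)\big)$, which (since $a-2(1+\tfrac1m)>0$) is a decreasing function of $\sigma$ that tends to $a>\tfrac{2p}{p-1}$ as $\sigma\to0^+$, there is $\sigma_0\in(0,1)$ such that \eqref{eqaa} holds whenever $0<\sigma\le\sigma_0$. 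Since $\sigma=\frac{mq}{(p-1)(s+1)}$ with $m$ already fixed, taking $q=1$ and $s$ sufficiently large forces $0<\sigma\le\sigma_0$, and this choice of large $s$ is compatible with \eqref{ams}. At this point $q,m,s,a$ are fixed so that every hypothesis of Theorem~\ref{thex}(ii) is in force.

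Finally, Theorem~\ref{thex}(ii) produces constants $\varepsilon,\delta>0$. I would take $\rho(x)=\alpha\,(1+|x|^2)^{-a/2}$ with $\alpha>0$ chosen small enough that $\alpha<\varepsilon$ and $2\alpha<\delta\,\alpha^{\sigma}$ --- possible because $\sigma<1$ makes $\alpha^{1-\sigma}\to0$ as $\alpha\to0^+$ --- and set $\beta=2\alpha$. Then $\rho\in C(\R^N)$, $\rho\ge0$, inequality \eqref{roro} holds with these $\alpha,\beta$, and \eqref{ed} holds, so Theorem~\ref{thex}(ii) yields a positive solution of \eqref{GM}, proving part~(ii). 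There is no genuine analytic difficulty here; the only point to watch is the bookkeeping in the preceding paragraph, namely verifying that the system of strict inequalities $2(1+\tfrac1m)<a<N$, $0<\sigma<1$, \eqref{ams}, \eqref{eqaa} is actually solvable --- and, as indicated, this collapses to the single inequality $\frac{2p}{p-1}<N$, that is, to the hypothesis $p>\frac{N}{N-2}$.
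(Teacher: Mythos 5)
Your proof is correct and follows essentially the same route as the paper: part (i) is read off Theorem \ref{thgs}(i), and part (ii) is established by tuning $m,a,s,q$ so that the hypotheses $2(1+\tfrac1m)<a<N$, \eqref{ams}, \eqref{eqaa}, $0<\sigma<1$ of Theorem \ref{thex}(ii) are met, then choosing $\rho$ and $\alpha,\beta$ via \eqref{roro}, \eqref{ed}. The only superficial difference is that the paper fixes $a$ near $N$ first and then takes $m$ large, whereas you fix $m>p-1$ first so that every $a\in(\tfrac{2p}{p-1},N)$ works, and you drive $\sigma$ small by enlarging $s$ with $q=1$ rather than by shrinking $q$; both are equivalent parameter bookkeeping, and your final step spelling out the explicit $\rho$ and $\beta=2\alpha$ is a useful elaboration of the paper's terser ``it remains to choose $\beta>\alpha>0$.''
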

Similarly, the result below states the optimality of the exponent $\frac{N+2}{N-2}$ when it comes to the study of radial solutions to \eqref{GM}.

\begin{corollary}\label{cor3}
Assume $\rho\equiv 0$. 
\begin{enumerate}
\item[{\rm (i)}] If $0<p< \frac{N+2}{N-2}$ then the system \eqref{GM} has no positive radial solutions for all $q,m,s>0$;  
\item[{\rm (ii)}] Conversely, for all $p>\frac{N+2}{N-2}$ there exist $q,m, s>0$ such that \eqref{GM} has a positive radial solution.
\end{enumerate}

\end{corollary}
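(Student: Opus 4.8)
The plan is to treat the two parts separately: part (i) will follow by combining results already proved above, while part (ii) will be handled by displaying an explicit radial solution.

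For part (i) I would split the interval $0<p<\frac{N+2}{N-2}$ at the first critical exponent. If $0<p\le\frac{N}{N-2}$, then Theorem \ref{thgs}(i) already rules out \emph{every} positive solution of \eqref{GM}, radial or not. If instead $\frac{N}{N-2}<p<\frac{N+2}{N-2}$, then (recalling that here $\rho\equiv0$) Corollary \ref{cor1}(ii) rules out positive \emph{radial} solutions. These two cases exhaust the range $0<p<\frac{N+2}{N-2}$, so part (i) follows with no extra work.

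For part (ii) the idea is to look for a solution with $u=v$, which collapses the system onto the single critical Sobolev equation. Given $p>\frac{N+2}{N-2}$, set
\[
q:=p-\tfrac{N+2}{N-2}>0,\qquad s:=1,\qquad m:=\tfrac{2N}{N-2},
\]
all strictly positive since $N\ge3$, and note that $p-q=m-s=\frac{N+2}{N-2}$. Let $U(x)=(1+|x|^2)^{-(N-2)/2}$ be the Aubin--Talenti bubble, so that $-\Delta U=N(N-2)\,U^{(N+2)/(N-2)}$ in $\R^N$, and put $u=v=c_N\,U$ with $c_N:=\big(N(N-2)\big)^{(N-2)/4}$, chosen so that $-\Delta u=-\Delta v=u^{(N+2)/(N-2)}$. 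Since $v=u>0$, the first equation of \eqref{GM} with $\rho\equiv0$ reads $-\Delta u=u^p v^{-q}=u^{p-q}=u^{(N+2)/(N-2)}$, and the second reads $-\Delta v=u^m v^{-s}=u^{m-s}=u^{(N+2)/(N-2)}$; both hold by construction. Moreover $u=v\in C^\infty(\R^N)$ is positive, radial, and $u(x)=v(x)\to0$ as $|x|\to\infty$, so $(u,v)$ is a positive radial solution of \eqref{GM} for this choice of $q,m,s$.

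I do not expect a genuine obstacle: part (i) is just a bookkeeping use of Theorem \ref{thgs}(i) together with Corollary \ref{cor1}(ii), and in part (ii) the only thing one has to notice is that the natural target is $-\Delta w=w^{(N+2)/(N-2)}$ and that the requirement $q>0$ needed to reduce the first equation to it is exactly $p>\frac{N+2}{N-2}$ — the same threshold appearing in part (i) — while checking smoothness, positivity, radiality and the decay $u,v\to0$ for the explicit bubble is routine. An alternative, less explicit proof of part (ii) would instead invoke the dynamical-systems existence result for radial solutions of \eqref{GM} with $\rho\equiv0$ from \cite{BG10}.
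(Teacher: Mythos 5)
Your proposal is correct and takes essentially the same approach as the paper: part (i) is deduced by combining Theorem \ref{thgs}(i) with Corollary \ref{cor1}(ii), and part (ii) exhibits $u=v$ equal to the Aubin--Talenti bubble solving $-\Delta w=w^{(N+2)/(N-2)}$ after choosing $q=p-\frac{N+2}{N-2}$. The only cosmetic difference is that you fix $s=1$, $m=\frac{2N}{N-2}$, while the paper leaves $s>0$ free and takes $m=\frac{N+2}{N-2}+s$; your choice is a valid special case.
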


The main tool in deriving the nonexistence of positive solutions to \eqref{GM0} and \eqref{GM} is the use of various integral representations of solutions to semilinear elliptic equations in $\R^N$ which were recently obtained in \cite{DG22}; we recall the relevant findings of \cite{DG22} in Section 2.1 below. The existence of positive solutions to \eqref{GM0} and \eqref{GM} combines the Schauder fixed point theorem with  the study of the single equation 
$$
\begin{cases}
-\Delta v+\mu v=\psi(x) v^{-s}\quad\mbox{ in }\; \R^N,\\
v(x)\to 0\mbox{  as }|x|\to \infty,
\end{cases}
$$ 
where $\psi\in C(\R^N)$ has a specific decay rate at infinity and $\mu\geq 0$. 

The remaining part of the manuscript is organised as follows. In Section 2 we derive some preliminary results related to integral representations for semilinear elliptic equations in $\R^N$. Sections 3 is devoted to the proof of Theorem \ref{th1}. In Section 4 we prove Theorem \ref{thgs} and Corollary \ref{cor1}. Finally, the proof of Theorem \ref{thex} and Corollaries \ref{corex}, \ref{cor2}  and \ref{cor3} are given in Section 5. 
\section{Preliminary results}

\subsection{Fundamental solutions and integral representations}

Let $s \in \R$ and $z\in \C\setminus\{w\in \C: {\rm Re\,}w\leq 0, \, {\rm Im\,}w =0\}$. The modified Bessel functions $I_s(z)$ and $K_s(z)$ of the first and second kind respectively are defined as 
\begin{equation}\label{bsl}
I_s(z)=\sum_{k=0}^\infty \frac{\Big(\displaystyle \frac{z}{2}\Big)^{2k+s}}{k!\Gamma(k+s+1)}\quad\mbox{ and }\quad
K_s(z)=\frac{\pi}{2}\cdot \frac{I_{-s}(z)-I_s(z)}{\sin(\pi s)},
\end{equation}
where $\Gamma(z)$ is the standard Gamma function. 
For an integer $n$, the above definition of $K_n(z)$ is understood in the sense $K_n(z)=\lim_{s\to n}K_{s}(z)$. 
Given $\lambda>0$,  the function
\begin{equation}\label{fbesselp}
\mathcal{G}_{\lambda}(x)=\frac{\lambda^{N-2}}{\sqrt{(2\pi)^N}} \cdot  \frac{ K_{\frac{N}{2}-1}(\lambda^{1/2}|x|)}{|x|^{\frac{N}{2}-1}}\, , \; x\in \R^N\setminus\{0\}\,,
\end{equation}
is the  fundamental solution of the operator $-\Delta+\lambda I$ (see \cite[Section 3]{B1996}) in the sense that
\begin{equation}\label{fsl}
(-\Delta+\lambda I) \mathcal{G}_{\lambda}=\delta_0\quad\mbox{ in }\mathcal{D}'(\R^N).
\end{equation}
The asymptotic behaviour of $\mathcal{G}_\lambda$ is summarised below.

\begin{lemma}\label{lem1}{\rm (see \cite[Chapter 4, pag. 416]{AS1961}, \cite[Section 2]{DG22}) } 

Let $\lambda>0$. The Bessel potential $\mathcal{G}_{\lambda}$ given by \eqref{fbesselp} has the following properties:
\begin{enumerate}
\item[\rm (i)]  There exists $c_1>1$ such that
$$
\frac{1}{c_1}  |x|^{-\frac{N-1}{2}}e^{-\sqrt{\lambda} |x|} \leq \mathcal{G}_\lambda (x)\leq c_1
|x|^{-\frac{N-1}{2}}e^{-\sqrt{\lambda} |x|} \quad\mbox{ for all }|x|>1.
$$

\item[\rm (ii)]  There exists $c_2>1$ such that
$$
\displaystyle \frac{1}{c_2}  |x|^{2-N}\leq \mathcal{G}_{\lambda}(x)\leq c_2 |x|^{2-N} 
\quad\mbox{ for all }0<|x|<1.
$$

\item[\rm (iii)] $\mathcal{G}_{\lambda}(x)$ is a decreasing function of $|x|$ and $\mathcal{G}_{\lambda}\in L^1(\R^N)$.

\item[\rm (iv)] There exists $C>0$ such that  
\begin{equation}\label{esgamma}
\Big| \nabla \mathcal{G}_\lambda (x)  \Big|\leq C
|x|^{-\frac{N-1}{2}}e^{-\sqrt{\lambda} |x|} 
\quad \mbox{  for all }|x|>1.
\end{equation}
\end{enumerate}

\end{lemma}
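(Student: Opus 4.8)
The plan is to deduce all four statements from the classical asymptotics of the modified Bessel function $K_\nu$ of the second kind of order $\nu=\tfrac N2-1$. Since $N\ge 3$ we have $\nu\ge\tfrac12>0$, which is exactly what makes the behaviour near $0$ a pure power rather than a logarithm. I would start from the two limits
$$
K_\nu(z)=\tfrac12\Gamma(\nu)\Big(\tfrac z2\Big)^{-\nu}(1+o(1))\quad(z\to0^+),\qquad
K_\nu(z)=\sqrt{\tfrac{\pi}{2z}}\,e^{-z}(1+o(1))\quad(z\to\infty),
$$
together with $K_\nu>0$ and $K_\nu'(z)=-\tfrac12\big(K_{\nu-1}(z)+K_{\nu+1}(z)\big)<0$ on $(0,\infty)$. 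From the first pair, a compactness argument gives, for each fixed order and each $r_0>0$, the two-sided bounds $K_\nu(z)\simeq z^{-\nu}$ on $(0,r_0]$ and $K_\nu(z)\simeq z^{-1/2}e^{-z}$ on $[r_0,\infty)$ (with constants depending on $\nu,r_0$): indeed $z^{\nu}K_\nu(z)$ and $z^{1/2}e^{z}K_\nu(z)$ are continuous, strictly positive, and have finite positive limits at the relevant endpoint.

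Write $\mathcal G_\lambda(x)=g(|x|)$ where, by \eqref{fbesselp}, $g(r)=\dfrac{\lambda^{N-2}}{\sqrt{(2\pi)^N}}\,r^{1-\frac N2}K_\nu(\sqrt\lambda\,r)$. For (i), take $|x|>1$, set $z=\sqrt\lambda\,|x|\ge\sqrt\lambda$, and use the large-argument bound with $r_0=\sqrt\lambda$: $\mathcal G_\lambda(x)\simeq |x|^{1-\frac N2}(\sqrt\lambda\,|x|)^{-1/2}e^{-\sqrt\lambda\,|x|}\simeq |x|^{-\frac{N-1}{2}}e^{-\sqrt\lambda\,|x|}$, the $\lambda$-powers being absorbed into $c_1$. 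For (ii), take $0<|x|<1$ and use the small-argument bound: $\mathcal G_\lambda(x)\simeq |x|^{1-\frac N2}(\sqrt\lambda\,|x|)^{1-\frac N2}\simeq |x|^{2-N}$. For (iii), $g$ is the product of the two positive strictly decreasing functions $r\mapsto r^{1-N/2}$ (here $1-\tfrac N2<0$) and $r\mapsto K_\nu(\sqrt\lambda\,r)$, hence strictly decreasing; integrability follows by splitting $\R^N$ into $\{|x|<1\}$ and $\{|x|>1\}$ and using (ii) — the singularity $|x|^{2-N}$ is integrable near $0$ since $2-N>-N$ — and the exponential decay in (i) at infinity.

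For (iv), since $\mathcal G_\lambda$ is radial, $|\nabla\mathcal G_\lambda(x)|=|g'(|x|)|$, and differentiating and invoking the identity for $K_\nu'$ gives
$$
g'(r)=\frac{\lambda^{N-2}}{\sqrt{(2\pi)^N}}\Big[\big(1-\tfrac N2\big)r^{-\frac N2}K_\nu(\sqrt\lambda\,r)-\frac{\sqrt\lambda}{2}\,r^{1-\frac N2}\big(K_{\nu-1}+K_{\nu+1}\big)(\sqrt\lambda\,r)\Big].
$$
Applying the large-argument bound to each of the fixed orders $\nu-1,\nu,\nu+1$ on $r\ge1$ and using $r^{-N/2}\le r^{1-N/2}$, each term is $\lesssim r^{-\frac{N-1}{2}}e^{-\sqrt\lambda\,r}$, which is \eqref{esgamma}.

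I do not expect a genuine obstacle: the statement merely repackages standard special-function asymptotics (and is already attributed to \cite{AS1961,DG22}), so the only points requiring care are keeping $\nu=\tfrac N2-1>0$ in view — this is where $N\ge3$ is used — verifying that the powers of $|x|$ combine correctly in (i), (ii), (iv), and checking that the compactness step delivers uniform two-sided bounds over the whole ranges $(0,1]$ and $[1,\infty)$ rather than merely pointwise asymptotics.
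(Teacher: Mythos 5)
Your proof is correct, and since the paper presents this lemma as a cited result from \cite{AS1961} and \cite{DG22} rather than proving it, there is no in-paper argument to compare against; your derivation via the standard small- and large-argument asymptotics of $K_\nu$, the positivity and monotonicity of $K_\nu$, and the recurrence $K_\nu'=-\tfrac12(K_{\nu-1}+K_{\nu+1})$ is precisely the route those references take. All the power counts in (i), (ii), (iv) check out, the compactness step does yield uniform two-sided bounds on $(0,r_0]$ and $[r_0,\infty)$ because the normalised functions $z^\nu K_\nu(z)$ and $z^{1/2}e^z K_\nu(z)$ extend continuously and positively to the closed intervals, and the integrability estimate in (iii) is verified correctly on both the inner and outer regions.
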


The following result, recently obtained in \cite{DG22},  establishes the analogy between the distributional solutions of 
\begin{equation}\label{distr}
-\Delta u+\lambda u=f(x)\quad\mbox{  in }\; \mathcal{D}'(\R^N),\quad N\geq 3,
\end{equation}
and their integral representation with kernel $\mathcal{G}_\lambda$. 

\begin{proposition}\label{prep} {\rm (see \cite[Theorem 1.7]{DG22})} 

Let $\lambda>0$ and $f\in L^1_{loc}(\R^N)$. 
If $u\in L^1_{loc}(\R^N)$ is a distributional solution of \eqref{distr} which  satisfies 
\begin{equation}\label{subexp}
\liminf_{R\to \infty}  R^{-\frac{N+1}{2} }   \int_{R\leq |x-y|\leq 2R}|u(y)| e^{-\sqrt{\lambda} |x-y|} dy=0\quad\mbox{ for a.a. }x\in \R^N,
\end{equation}
then 
\begin{equation}\label{repm0}
u(x)=\int_{\R^N} \mathcal{G}_\lambda(x-y) f(y) dy \quad\mbox{ for a.a. }x\in \R^N,
\end{equation}
where $\mathcal{G}_\lambda$ is the fundamental solution of $-\Delta+\lambda I$ as given in \eqref{fbesselp}.

Conversely, if 
\begin{equation}\label{finiteint2}
  \int_{\R^N} \mathcal{G}_\lambda(x-y) f(y)dy < \infty\quad \mbox{ for a.a. } x\in \R^N,
\end{equation}
then the function $u$ given by \eqref{repm0} is a distributional solution of \eqref{distr}.
\end{proposition}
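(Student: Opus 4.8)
The plan is to prove the two implications separately: the converse is a routine Fubini computation built on $(-\Delta+\lambda I)\mathcal{G}_\lambda=\delta_0$, while the direct implication is the substantive part and rests on a Green's‑function‑on‑balls argument whose crux is a Poisson kernel estimate. For the converse, assume the integral in \eqref{finiteint2} converges absolutely for a.a.\ $x$ and set $w(x)=\int_{\R^N}\mathcal{G}_\lambda(x-y)f(y)\,dy$. First I would check $w\in L^1_{loc}(\R^N)$: on a ball $B_\rho$ split $f=f\mathbf 1_{B_{2\rho}}+f\mathbf 1_{B_{2\rho}^c}$, handling the near part via $\mathcal{G}_\lambda\in L^1(\R^N)$ (Lemma \ref{lem1}(iii)) and Tonelli, and the far part via \eqref{finiteint2}. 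Then, testing against $\varphi\in C^\infty_c(\R^N)$, write $\langle w,(-\Delta+\lambda)\varphi\rangle$ as a double integral, apply Fubini (legitimate after splitting $f=f^+-f^-$ and using \eqref{finiteint2} together with compactness of $\operatorname{supp}\varphi$), and use \eqref{fsl} to collapse $\int_{\R^N}\mathcal{G}_\lambda(x-y)(-\Delta+\lambda)\varphi(x)\,dx$ to $\varphi(y)$; this gives $\langle w,(-\Delta+\lambda)\varphi\rangle=\langle f,\varphi\rangle$, i.e.\ \eqref{distr}.

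For the direct implication, fix $x_0\in\R^N$. For a.a.\ $R>|x_0|$ one has $u\in L^1(\partial B_R)$ (Fubini on $B_{2R}$), and the Green representation on $B_R$ for $-\Delta+\lambda$ (justified by mollifying $f$, applying the smooth‑data formula, and passing to the limit) yields
\[
u(x_0)=\int_{B_R}G_R(x_0,y)f(y)\,dy+\int_{\partial B_R}P_R(x_0,y)u(y)\,d\sigma(y)=:I_R(x_0)+h_R(x_0),
\]
with $G_R\ge 0$ the Green function of $-\Delta+\lambda$ on $B_R$ and $P_R=-\partial_{\nu_y}G_R\ge 0$ its Poisson kernel. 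Using the standard facts $0\le G_R(x_0,y)\le\mathcal{G}_\lambda(x_0-y)$ and $G_R(x_0,y)\uparrow\mathcal{G}_\lambda(x_0-y)$ as $R\to\infty$ (the difference $\mathcal{G}_\lambda(x_0-\cdot)-G_R(x_0,\cdot)$ is nonnegative, $\lambda$‑harmonic in $B_R$, increasing in $R$, and tends to $0$), monotone convergence applied to $f^+$ and $f^-$ separately gives $I_R(x_0)\to\int_{\R^N}\mathcal{G}_\lambda(x_0-y)f^+(y)\,dy-\int_{\R^N}\mathcal{G}_\lambda(x_0-y)f^-(y)\,dy$, each term in $[0,\infty]$. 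It then suffices to prove $\liminf_{R\to\infty}|h_R(x_0)|=0$: along a sequence $R_k\to\infty$ with $h_{R_k}(x_0)\to 0$ we get $u(x_0)=\int_{\R^N}\mathcal{G}_\lambda(x_0-y)f^+(y)\,dy-\int_{\R^N}\mathcal{G}_\lambda(x_0-y)f^-(y)\,dy$, and finiteness of the left side forces both integrals finite, so $\mathcal{G}_\lambda*|f|(x_0)<\infty$ and $u(x_0)=\int_{\R^N}\mathcal{G}_\lambda(x_0-y)f(y)\,dy$. In particular, the finiteness of the representing integral is an output of the argument, not an assumption.

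The step I expect to be the main obstacle is the claim $\liminf_{R\to\infty}|h_R(x_0)|=0$, which I would reduce to the Poisson kernel bound
\[
0\le P_R(x_0,y)\le C(x_0)\,R^{-\frac{N-1}{2}}e^{-\sqrt\lambda R}\qquad\text{for }|y|=R\text{ large.}
\]
Writing $P_R(x_0,y)=-\partial_{\nu_y}\mathcal{G}_\lambda(x_0-y)+\partial_{\nu_y}H_R(y)$ with $H_R:=\mathcal{G}_\lambda(x_0-\cdot)-G_R(x_0,\cdot)\ge 0$, the first term is at most $|\nabla\mathcal{G}_\lambda(x_0-y)|$, for which Lemma \ref{lem1}(iv) gives exactly the rate $R^{-(N-1)/2}e^{-\sqrt\lambda R}$; for the second one compares the nonnegative $\lambda$‑harmonic $H_R$ on $B_R$ with the explicit radial $\lambda$‑harmonic barrier having boundary value $\sup_{\partial B_R}\mathcal{G}_\lambda(x_0-\cdot)\simeq R^{-(N-1)/2}e^{-\sqrt\lambda R}$ (Lemma \ref{lem1}(i)), a Hopf‑type bound for the modified‑Bessel radial profile giving $|\partial_{\nu_y}H_R|\lesssim R^{-(N-1)/2}e^{-\sqrt\lambda R}$. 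Granting the bound, $|h_R(x_0)|\le C(x_0)R^{-(N-1)/2}e^{-\sqrt\lambda R}\int_{\partial B_R}|u|\,d\sigma$; if $|h_R(x_0)|\ge\varepsilon_0>0$ for all large $R$ then $\int_{\partial B_R}|u|\,d\sigma\gtrsim\varepsilon_0 R^{(N-1)/2}e^{\sqrt\lambda R}$ for all large $R$, and integrating over $R\in[\rho,2\rho]$ while using $e^{-\sqrt\lambda|y|}\simeq e^{-\sqrt\lambda|x_0-y|}$ for $|y|$ large yields
\[
\rho^{-\frac{N+1}{2}}\int_{\rho\le|x_0-y|\le 2\rho}|u(y)|e^{-\sqrt\lambda|x_0-y|}\,dy\ \gtrsim\ \varepsilon_0\,\rho^{-\frac{N+1}{2}}\int_\rho^{2\rho}R^{\frac{N-1}{2}}\,dR\ \gtrsim\ \varepsilon_0
\]
for all large $\rho$, contradicting \eqref{subexp}; note that the exponent $\frac{N+1}{2}$ there is exactly what makes this bookkeeping close. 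As a remark on the underlying mechanism (and an alternative packaging), once $g:=\mathcal{G}_\lambda*f$ is known to be finite a.e.\ the converse implication shows $g$ solves \eqref{distr}, so $v:=u-g$ solves $-\Delta v+\lambda v=0$ and is real‑analytic; the claim is then a Liouville statement, since the spherical mean $r\mapsto |\partial B_r(x)|^{-1}\int_{\partial B_r(x)}v\,d\sigma$ solves the modified‑Bessel radial ODE and, being regular at $r=0$, equals $v(x)\,\psi(\sqrt\lambda\,r)$ with $\psi(\sqrt\lambda\,r)\simeq r^{-(N-1)/2}e^{\sqrt\lambda r}$, so that feeding this lower bound into the version of \eqref{subexp} inherited by $v$ forces $v(x)=0$ for every $x$. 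I would nonetheless organise the write‑up around the balls argument, as it simultaneously produces the finiteness needed to even speak of $g$, and I expect the barrier estimate for $\partial_{\nu_y}H_R$ to be the one genuinely delicate point.
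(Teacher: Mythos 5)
The paper imports this proposition verbatim from \cite[Theorem 1.7]{DG22} and gives no proof of its own, so there is nothing in the paper to compare your argument against; the assessment has to be on internal correctness. Your converse direction (Fubini against $(-\Delta+\lambda I)\mathcal{G}_\lambda=\delta_0$) is fine, and the overall balls-and-Poisson-kernel strategy for the direct implication is a legitimate route with the $R^{(N+1)/2}$ bookkeeping closing exactly as you indicate. But two steps do not go through as written. The estimate of $\partial_\nu H_R$ by a ``Hopf-type bound'' from $0\le H_R\le \sup_{\partial B_R}\mathcal{G}_\lambda(x_0-\cdot)$ does not work: the one comparison available, $\mathcal{G}_\lambda(x_0-\cdot)\ge H_R$ with equality on $\partial B_R$, gives $\partial_\nu H_R\ge\partial_\nu\mathcal{G}_\lambda(x_0-\cdot)$, the opposite inequality to the one you need, and a two-sided $L^\infty$ bound on a $\lambda$-harmonic function in the interior does not control its normal derivative at the boundary. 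A clean repair is to bound $P_R(x_0,\cdot)=-\partial_\nu G_R(x_0,\cdot)$ directly with the linear barrier $\Phi(z)=C_R(R-|z|)$ on the annulus $B_R\setminus\overline{B}_{R-1}$: one has $-\Delta\Phi+\lambda\Phi=(N-1)C_R/|z|+\lambda\Phi\ge 0$ there, $\Phi=0=G_R(x_0,\cdot)$ on $\partial B_R$, and $\Phi=C_R\ge\sup_{|z|=R-1}\mathcal{G}_\lambda(x_0-z)\ge G_R(x_0,\cdot)$ on $\partial B_{R-1}$ once $C_R\simeq R^{-(N-1)/2}e^{-\sqrt\lambda R}$ is chosen using Lemma~\ref{lem1}(i); the comparison principle then gives $G_R(x_0,\cdot)\le\Phi$ in the annulus, whence $P_R(x_0,y)=\lim_{t\to0^+}G_R(x_0,y-t\nu)/t\le C_R$. (Running the whole construction on balls centred at $x_0$ also makes the final bookkeeping cleaner.)

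The second gap is the sentence ``finiteness of the left side forces both integrals finite.'' Along $R_k$ you have $I_{R_k}(x_0)=A_{R_k}-B_{R_k}\to u(x_0)$ with $A_R=\int_{B_R}G_R f^+$ and $B_R=\int_{B_R}G_R f^-$ both monotone increasing to $\int\mathcal{G}_\lambda(x_0-\cdot)f^\pm$; nothing prevents $A_{R_k}\to\infty$ and $B_{R_k}\to\infty$ simultaneously while the difference converges. So as stated your argument does not actually produce the absolute convergence of $\mathcal{G}_\lambda * f(x_0)$ that the proposition tacitly asserts, and an extra argument is needed to rule out the $\infty-\infty$ scenario. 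In this paper's own applications the issue is moot, since $f\ge 0$ throughout (cf.\ Lemma~\ref{Glem}) and then $B_R\equiv 0$; but for the general statement this is a real hole in your write-up. Your Liouville repackaging (spherical means solving the modified Bessel ODE) is likely closer to the spirit of \cite{DG22}, but it starts from the same yet-to-be-established finiteness of $\mathcal{G}_\lambda*f$, so it does not sidestep the problem.
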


Observe that if 
$$
\lim_{R\to \infty}  R^{-\frac{N+1}{2} }  e^{-\sqrt{\lambda} R}  \int_{R\leq |x-y|\leq 2R}|u(y)| dy=0\quad\mbox{ for a.a. }x\in \R^N,
$$
then $u$ satisfies \eqref{subexp}. This follows from the fact that $ e^{-\sqrt{\lambda} |x-y|}\leq  e^{-\sqrt{\lambda} R}$ in the annular region $\{R\leq |x-y|\leq 2R\}$. In particular, any bounded solution $u$ of \eqref{distr} admits the representation \eqref{repm0}. 

As a consequence, we have:

\begin{lemma}\label{Glem}
Let $\lambda, \mu>0$ and $(u,v)$ be a solution of \eqref{GM0}. Then, for all $x\in \R^N$ one has
\begin{subequations}
\begin{align}
u(x)&=\int_{\R^N}\mathcal{G}_\lambda(x-y)\Big[\frac{u^p(y)}{v^q(y)}+\rho(y)  \Big] dy\, , \label{eqG1} \\[0.07in]
v(x) & = \int_{\R^N}\mathcal{G}_\lambda(x-y) \frac{u^m(y)}{v^s(y)} dy. \label{eqG2}
\end{align}
\end{subequations}
\end{lemma}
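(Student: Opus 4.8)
The plan is to apply Proposition \ref{prep} to each of the two equations in \eqref{GM0} separately, with the role of $\lambda$ in the proposition played by $\lambda$ (resp. $\mu$) and with the right-hand side $f$ taken to be $\frac{u^p}{v^q}+\rho$ (resp. $\frac{u^m}{v^s}$). Since $(u,v)$ is a positive \emph{classical} solution of \eqref{GM0} in the sense defined in the introduction, it is in particular a distributional solution of the corresponding equations \eqref{distr}, and both $u$ and $v$, together with the nonlinearities, are locally integrable, so the hypothesis $f\in L^1_{loc}(\R^N)$ is met. Thus the only thing to check before invoking the proposition is the growth condition \eqref{subexp} on $u$ (resp. $v$).

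The key observation is that by assumption $u(x),v(x)\to 0$ as $|x|\to\infty$, so in particular $u$ and $v$ are bounded on $\R^N$ (a positive classical solution is continuous, hence bounded on compacts, and tends to $0$ at infinity). As noted in the remark immediately following Proposition \ref{prep}, any bounded solution of \eqref{distr} satisfies \eqref{subexp}: indeed, using $e^{-\sqrt{\lambda}|x-y|}\le e^{-\sqrt{\lambda}R}$ on the annulus $\{R\le |x-y|\le 2R\}$ and $\|u\|_\infty<\infty$, one bounds
$$
R^{-\frac{N+1}{2}}\int_{R\le |x-y|\le 2R}|u(y)|e^{-\sqrt{\lambda}|x-y|}\,dy
\le \|u\|_\infty\, R^{-\frac{N+1}{2}}e^{-\sqrt{\lambda}R}\,|B_{2R}|
= C\,R^{\frac{N-1}{2}}e^{-\sqrt{\lambda}R}\to 0
$$
as $R\to\infty$, which is stronger than \eqref{subexp}. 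The same argument with $\mu$ in place of $\lambda$ and $v$ in place of $u$ handles the second equation.

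Hence Proposition \ref{prep} applies to both equations and yields the representations \eqref{repm0}, which are exactly \eqref{eqG1} and \eqref{eqG2}, valid a.e.; since all integrands are nonnegative and continuous and $\mathcal{G}_\lambda,\mathcal{G}_\mu\in L^1(\R^N)$ (Lemma \ref{lem1}(iii)) while $u^p/v^q+\rho$ and $u^m/v^s$ are bounded on $\R^N$, the right-hand sides are finite everywhere and continuous, so the identities in fact hold for all $x\in\R^N$. There is essentially no obstacle here: the lemma is a direct corollary of Proposition \ref{prep} once one records the boundedness of $u$ and $v$. The only mild point worth stating carefully is the finiteness of the convolution integrals so that the conclusion upgrades from ``a.a. $x$'' to ``all $x$'', but this is immediate from $\mathcal{G}_\lambda\in L^1$ and the boundedness of the nonlinearities.
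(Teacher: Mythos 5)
Your proof takes exactly the paper's route: the lemma is stated in the paper as an immediate consequence of Proposition \ref{prep} together with the preceding remark that bounded solutions satisfy \eqref{subexp}, and your write‑up fills in precisely those two checks ($f\in L^1_{loc}$ since it is continuous, and \eqref{subexp} from $u,v\to 0$ hence bounded). You also correctly use $\mathcal{G}_\mu$ for the second equation, which silently fixes what is evidently a typo ($\mathcal{G}_\lambda$) in the displayed statement \eqref{eqG2}.

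One point of your write‑up is not justified, however: you assert that $u^p/v^q+\rho$ and $u^m/v^s$ are \emph{bounded on all of $\R^N$} and use this to upgrade the a.e.\ identity of Proposition \ref{prep} to an everywhere identity. That boundedness does not follow from the hypotheses: $u(x),v(x)\to 0$ at infinity gives $u^p\to 0$ but $v^{-q}\to\infty$, and nothing in \eqref{GM0} alone forces the product to stay bounded (indeed, later in the proof of Theorem \ref{th1}(ii) the author works with situations where $u^p/v^q\simeq e^{\beta|x|}$ with $\beta$ of either sign). The upgrade to ``for all $x$'' should instead be argued from the continuity of $u$ and $v$ together with the continuity of $\mathcal{G}_\lambda*f$, which one gets from $\mathcal{G}_\lambda\in L^1(\R^N)$ with an integrable singularity of order $|z|^{2-N}$ at the origin, the local boundedness (not global boundedness) of $f$, and the a.e.\ finiteness $\mathcal{G}_\lambda*f=u<\infty$ to control the tail of the convolution. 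This is a minor repair; the core of your argument and the identification of the relevant tools are correct.
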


To end this section, let us mention that similar results hold for the Laplace operator $-\Delta$. Precisely, let 
\begin{equation}\label{gf1}
\mathcal{G}_0=\frac{1}{4\pi^{N/2}}\Gamma\Big(\frac{N-2}{2}\Big) |x|^{2-N}.
\end{equation}
Then, $\mathcal{G}_0$ is the  fundamental solution of $-\Delta$ in the sense that
$$
-\Delta \mathcal{G}_{0}=\delta_0\quad\mbox{ in }\mathcal{D}'(\R^N).
$$
The result below was obtained in \cite{CDM08} and extended in \cite{DG22}. 

\begin{proposition}\label{prep0} {\rm (see \cite[Theorem 2.4]{CDM08}, \cite[Theorem 1.1]{DG22})} 

Let $f\in L^1_{loc}(\R^N)$, $N\geq 3$, $u:\R^N\to \R$ be a measurable function and $\ell\in \R$.  

The following statements are equivalent:

\begin{enumerate}
\item[\rm (i)] $u\in L^1_{loc}(\R^N)$ is a distributional solution of  
\begin{equation}\label{eqrep1}
-\Delta u=f(x)\quad\mbox{ in  }\; \mathcal{D}'(\R^N)
\end{equation}
and for a.a. $x\in \R^N$, $u$ satisfies 
\begin{equation}\label{rep0}
\liminf_{R\to \infty} \frac{1}{R^N}\int\limits_{R\leq |y-x|\leq 2R}|u(y)-\ell|dy=0.
\end{equation}

\item[\rm (ii)] $u\in L^1_{loc}(\R^N)$ is a distributional solution of \eqref{eqrep1}, ${\rm essinf}_{\R^N}\,u=\ell$ and 
$$
-\Delta u\ge 0\quad\mbox{ in }\quad \mathcal{D}'(\R^N).
$$

\item[\rm (iii)] We have
\begin{equation}\label{finiteint}
  \int_{\R^N}\mathcal{G}_0(x-y) f(y) dy < \infty\quad \mbox{ for a.a. } x\in \R^N
\end{equation}
and the following representation holds
\begin{equation}\label{rep2}
u(x)=\ell+ \int_{\R^N} \mathcal{G}_0 (x-y) f(y) dy \quad\mbox{ for a.a. }x\in \R^N.
\end{equation}
\end{enumerate}

\end{proposition}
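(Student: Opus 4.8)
The plan is to normalise $\ell=0$ (replace $u$ by $u-\ell$; this keeps $-\Delta u=f$ and turns (i)--(iii) into their $\ell=0$ versions) and then establish the cycle $(iii)\Rightarrow(i)\Rightarrow(iii)$ together with the equivalence $(ii)\Leftrightarrow(iii)$. Write $\gamma_N=\frac{1}{4\pi^{N/2}}\Gamma\big(\frac{N-2}{2}\big)$, so that $\mathcal G_0(x)=\gamma_N|x|^{2-N}$, and let $M_u(x,r)$ be the mean value of $u$ over $\partial B_r(x)$. The one analytic input is that, since $-\Delta u=f\in L^1_{loc}$, the function $u$ has a representative for which, for a.e.\ $x$, the map $r\mapsto M_u(x,r)$ is absolutely continuous on $(0,\infty)$ with $M_u(x,0^+)=u(x)$ and the Green representation on the ball $B_r(x)$,
\[
M_u(x,r)=u(x)-\gamma_N\int_{B_r(x)}\big(|x-y|^{2-N}-r^{2-N}\big)f(y)\,dy,\qquad(\star)
\]
holds; here one uses that $\mathcal G_0$ is radial, so the Green function of $B_r(x)$ with pole at the centre is $y\mapsto\gamma_N(|x-y|^{2-N}-r^{2-N})\ge0$.

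For $(iii)\Rightarrow(i)$: if $u(x)=\int_{\R^N}\mathcal G_0(x-y)f(y)\,dy$ is finite a.e., then Fubini and $-\Delta\mathcal G_0=\delta_0$ give that $u$ solves $-\Delta u=f$ in $\mathcal D'(\R^N)$; moreover, averaging $\mathcal G_0(\cdot-y)$ over $\partial B_r(x)$ yields $M_u(x,r)=\gamma_N\int_{\R^N}\big(\max\{r,|x-y|\}\big)^{2-N}f(y)\,dy$, which tends to $0$ as $r\to\infty$ by dominated convergence with majorant $\mathcal G_0(x-\cdot)f$. Since $R^{-N}\int_{R\le|y-x|\le2R}|u|\,dy=R^{-N}\int_R^{2R}|\partial B_\rho(x)|\,M_u(x,\rho)\,d\rho\le C\sup_{\rho\ge R}M_u(x,\rho)$, the decay required in (i) follows.

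For $(i)\Rightarrow(iii)$: using $|\partial B_\rho(x)|\ge|\partial B_R(x)|$ for $\rho\in[R,2R]$, the hypothesis implies $\liminf_{\rho\to\infty}|\partial B_\rho(x)|^{-1}\int_{\partial B_\rho(x)}|u|\,d\sigma=0$, so there is a sequence $\rho_k\uparrow\infty$ (a priori depending on $x$) with $M_u(x,\rho_k)\to0$. In $(\star)$ the integrand $\gamma_N(|x-y|^{2-N}-\rho_k^{2-N})\mathbf 1_{B_{\rho_k}(x)}(y)$ is nonnegative and increases pointwise to $\mathcal G_0(x-y)$, so — this is where $f\ge0$ is used — monotone convergence allows passing to the limit along $\rho_k$ in $(\star)$, which gives $0=u(x)-\int_{\R^N}\mathcal G_0(x-y)f(y)\,dy$ with the integral finite (being equal to $u(x)$). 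For a.e.\ $x$ this is (iii).

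Finally, $(ii)\Leftrightarrow(iii)$ is classical potential theory. If (ii) holds then $u$ is superharmonic and bounded below by $\ell={\rm essinf}\,u$, so the Riesz decomposition theorem writes $u=h+\int_{\R^N}\mathcal G_0(\cdot-y)f(y)\,dy$ with $h$ the greatest harmonic minorant of $u$; being harmonic and bounded below on $\R^N$ with $N\ge3$, $h$ is constant by the Liouville theorem, and, the potential being $\ge0$ while ${\rm essinf}\,u=\ell$, necessarily $h\equiv\ell$, which is (iii). Conversely $u=\ell+\int_{\R^N}\mathcal G_0(\cdot-y)f(y)\,dy$ with $f\ge0$ and the potential finite a.e.\ is superharmonic with $u\ge\ell$, and its spherical means tend to $\ell$ as in the second paragraph, so ${\rm essinf}\,u=\ell$, giving (ii). The main point that needs care throughout is the low regularity of $u$: one has to select the correct superharmonic (quasicontinuous) representative and justify $(\star)$, the relation $M_u(x,0^+)=u(x)$, and the passage to the limit for a.e.\ $x$ — most cleanly by writing $u$ on each ball $B_R$ as the Newtonian potential of $f\mathbf 1_{B_R}\in L^1(\R^N)$ plus a function harmonic on $B_R$ — while keeping in mind that the radii $\rho_k$ above depend on $x$, which is harmless since $(\star)$ is applied pointwise in $x$.
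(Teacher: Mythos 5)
The paper does not prove this proposition; it is quoted from \cite[Theorem 2.4]{CDM08} and \cite[Theorem 1.1]{DG22}, so there is no in-text argument to compare against. Your proof is a standard and essentially correct potential-theoretic argument: local Green representation on balls $(\star)$, monotone limit of the truncated potentials along the radii supplied by the $\liminf$ condition, and Riesz decomposition plus Liouville for the remaining equivalence. However, one hypothesis is missing from the statement as printed, and your argument silently restores it in a way worth flagging: the three-way equivalence is \emph{false} for sign-changing $f$. For example, with $u(x)=e^{-|x|^2}$ and $\ell=0$, both (i) and (iii) hold, while $f=-\Delta u=(2N-4|x|^2)e^{-|x|^2}$ changes sign, so (ii) fails. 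The cited sources assume $f\ge 0$. You note ``this is where $f\ge 0$ is used'' only at the monotone-convergence step in (i)$\Rightarrow$(iii), but in fact it is needed tacitly throughout: in (iii)$\Rightarrow$(i) you write the annular average as $R^{-N}\int_R^{2R}|\partial B_\rho(x)|\,M_u(x,\rho)\,d\rho$, which requires $M_u=M_{|u|}$, i.e.\ $u\ge\ell$ a.e., i.e.\ $f\ge0$; and in (ii)$\Leftrightarrow$(iii) you need $f\ge 0$ for the Riesz potential to be nonnegative and superharmonic. It would be cleaner to declare $f\ge 0$ a standing assumption up front rather than mention it once in passing.

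A further small gap occurs in (ii)$\Rightarrow$(iii): the inference ``the potential being $\ge 0$ while ${\rm essinf}\,u=\ell$, necessarily $h\equiv\ell$'' only yields $h\le\ell$ (from $u=h+P$, $P\ge0$, hence $\ell={\rm essinf}\,u\ge h$). For the reverse inequality $h\ge\ell$ you should add that the constant function $\ell$ is a harmonic minorant of $u$ (since $u\ge\ell$ a.e.), and $h$ is by construction the \emph{greatest} harmonic minorant; alternatively, reuse your observation from (iii)$\Rightarrow$(i) that the spherical means of the Riesz potential tend to $0$, so its essential infimum is $0$. Either fix is one line, but as written the step is not justified.
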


As a consequence of Proposition \ref{prep0}  and similar to Lemma \ref{Glem} we have:

\begin{lemma}\label{Glem0}
Let $(u,v)$ be a solution of \eqref{GM}. Then, there exists a constant $c(N)>0$ such that for all $x\in \R^N$ one has
\begin{subequations}
\begin{align}
u(x)&=c(N)\int_{\R^N}|x-y|^{2-N}\Big[\frac{u^p(y)}{v^q(y)}+\rho(y)  \Big] dy\, , \label{eqG01} \\[0.07in]
v(x) & = c(N)\int_{\R^N} |x-y|^{2-N} \frac{u^m(y)}{v^s(y)} dy. \label{eqG02}
\end{align}
\end{subequations}
\end{lemma}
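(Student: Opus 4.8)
The plan is to deduce Lemma~\ref{Glem0} from Proposition~\ref{prep0} applied separately to each of the two equations in \eqref{GM}, with $\ell=0$ enforced by the decay condition $u(x),v(x)\to 0$ as $|x|\to\infty$. First I would set $f_1(x)=u^p(x)/v^q(x)+\rho(x)$ and $f_2(x)=u^m(x)/v^s(x)$, and note that since $(u,v)$ is a positive classical solution of \eqref{GM}, both $f_1$ and $f_2$ are nonnegative and continuous, hence in $L^1_{loc}(\R^N)$; moreover $u,v\in C^2(\R^N)$ are in particular in $L^1_{loc}(\R^N)$ and are classical (hence distributional) solutions of $-\Delta u=f_1$ and $-\Delta v=f_2$ in $\mathcal{D}'(\R^N)$. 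This puts us in a position to invoke the equivalence in Proposition~\ref{prep0}.

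Next I would verify condition (i) of Proposition~\ref{prep0} with $\ell=0$ for $u$. Since $u(x)\to 0$ as $|x|\to\infty$, for every $\varepsilon>0$ there is $R_0$ with $|u(y)|<\varepsilon$ whenever $|y|>R_0$; hence for $R$ large and $x$ fixed, the annulus $\{R\le|y-x|\le 2R\}$ lies in $\{|y|>R_0\}$, so
\[
\frac{1}{R^N}\int_{R\le|y-x|\le 2R}|u(y)-0|\,dy\le \frac{\varepsilon}{R^N}\,\big|\{R\le|y-x|\le 2R\}\big|=\varepsilon\,\omega_N(2^N-1),
\]
where $\omega_N$ is the volume of the unit ball; letting $R\to\infty$ and then $\varepsilon\to 0$ gives \eqref{rep0}. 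The same argument applies verbatim to $v$. Then statement (i) of Proposition~\ref{prep0} holds for $u$ with data $f_1$ and for $v$ with data $f_2$, so statement (iii) holds: the integrals $\int_{\R^N}\mathcal{G}_0(x-y)f_i(y)\,dy$ are finite a.e.\ and $u(x)=\int_{\R^N}\mathcal{G}_0(x-y)f_1(y)\,dy$, $v(x)=\int_{\R^N}\mathcal{G}_0(x-y)f_2(y)\,dy$ for a.a.\ $x$. Substituting the explicit form \eqref{gf1} of $\mathcal{G}_0$ and setting $c(N)=\frac{1}{4\pi^{N/2}}\Gamma\!\big(\tfrac{N-2}{2}\big)$ yields \eqref{eqG01} and \eqref{eqG02} for a.a.\ $x\in\R^N$. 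Finally, since $u,v$ are continuous and the right-hand sides are continuous in $x$ (the convolution of the locally integrable, decaying-at-infinity kernel $|x-y|^{2-N}$ against the nonnegative functions $f_i$, which one checks are integrable against the kernel near infinity because the representation is finite a.e.), the identities extend from a.e.\ $x$ to every $x\in\R^N$.

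The main obstacle, such as it is, is bookkeeping rather than depth: one must make sure the hypotheses of Proposition~\ref{prep0} are literally met, in particular that $f_1,f_2\in L^1_{loc}$ (immediate from continuity of $u,v$ and positivity of $v$) and that the "essinf" normalization $\ell=0$ is the correct one — this is exactly where the boundary condition $u,v\to 0$ is used, together with positivity of $u,v$, which forces $\mathrm{essinf}_{\R^N}u=\mathrm{essinf}_{\R^N}v=0$ and $-\Delta u,-\Delta v\ge 0$ in $\mathcal{D}'(\R^N)$ since $f_1,f_2\ge 0$; this is the route through statement (ii) of Proposition~\ref{prep0}, which could be used as an alternative to verifying \eqref{rep0} directly. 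The only mild subtlety is upgrading the "for a.a.\ $x$" conclusion to "for all $x$", which follows from continuity of both sides as noted above.
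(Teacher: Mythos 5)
Your proposal is correct and mirrors the paper's (unwritten) argument: Lemma \ref{Glem0} is presented there as a direct consequence of Proposition \ref{prep0}, and you supply precisely the verification of \eqref{rep0} with $\ell=0$ that makes this precise, while also correctly noting that the alternative route through statement (ii) via positivity and $-\Delta u,-\Delta v\ge 0$ works just as well. The only place I would tighten is the upgrade from ``a.a.\ $x$'' to all $x$: rather than appealing loosely to continuity of the convolution, the cleanest justification is that the Newtonian potential of a nonnegative $L^1_{loc}$ density is superharmonic, hence equal at every point to the limit of its ball averages, which by the a.e.\ identity agree with those of the continuous functions $u$ and $v$.
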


\subsection{Some results on semilinear elliptic equations}

In this section we collect some  useful results on semilinear elliptic equations. 

\begin{lemma}{\rm (see \cite[Theorem 2.1]{AS11})}\label{arms}
Let $f:(0, \infty)\to (0, \infty)$ be such that 
$$
\liminf_{t\to 0}  f(t)t^{-\frac{N}{N-2}}>0.
$$ 
Then, the inequality $-\Delta u\geq f(u)$ has no positive $C^2$-solutions in any exterior domain of $\R^N$, $N\geq 3$. 
\end{lemma}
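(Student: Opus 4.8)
The plan is to argue by contradiction using the integral representation machinery developed in Section 2.1, together with a scaling/subsolution comparison. Suppose $u$ is a positive $C^2$-solution of $-\Delta u \ge f(u)$ in an exterior domain $\{|x| > R_0\}$. Since $\liminf_{t\to 0} f(t) t^{-N/(N-2)} > 0$, there exist constants $c_0 > 0$ and $\tau_0 > 0$ such that $f(t) \ge c_0 t^{N/(N-2)}$ for all $0 < t \le \tau_0$. I would first establish that any positive superharmonic function on an exterior domain of $\R^N$ ($N \ge 3$) that is bounded must tend to a finite limit $\ell \ge 0$ along $|x| \to \infty$; in fact, by the Kelvin transform or by comparison with the fundamental solution, one shows $\liminf_{|x|\to\infty} u(x)$ exists, and since $u$ is a positive supersolution of $-\Delta u \ge 0$, it is bounded below. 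Replacing $u$ by $\min\{u, \tau_0\}$ (which remains a supersolution of $-\Delta u \ge c_0 u^{N/(N-2)}$ in the viscosity/distributional sense after the standard truncation argument, or by working on a region where $u$ is already small), we reduce to the case where $u \le \tau_0$ and $-\Delta u \ge c_0 u^{N/(N-2)}$ in $\{|x| > R_1\}$ for some $R_1 \ge R_0$.

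The core of the argument is then the classical observation that $-\Delta u \ge c_0 u^{N/(N-2)}$ with $u > 0$ forces too much mass. I would invoke Proposition \ref{prep0}: since $-\Delta u \ge 0$ and $u$ is bounded on the exterior domain, writing $\ell = \mathrm{essinf}\, u \ge 0$, the representation $u(x) = \ell + c(N)\int |x-y|^{2-N}(-\Delta u)(y)\,dy$ holds (after extending $-\Delta u$ appropriately, or by working with the Riesz potential of the measure $-\Delta u$ restricted to the exterior domain). The finiteness condition \eqref{finiteint} then requires $\int_{|y|>R_1} |y|^{2-N}\, u(y)^{N/(N-2)}\, dy < \infty$. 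On the other hand, from the representation one has the pointwise lower bound $u(x) \ge c(N) \int_{|y| > R_1} |x-y|^{2-N} c_0 u(y)^{N/(N-2)}\, dy$, and bootstrapping this — using that for $|x|$ large, $|x-y|^{2-N} \gtrsim |x|^{2-N}$ on a ball of radius $\sim |x|$ around $x$ — yields a lower bound $u(x) \gtrsim |x|^{-(N-2)} \cdot (\text{something})$, i.e. $u(x)$ cannot decay faster than $|x|^{2-N}$ unless the potential integral converges, while simultaneously the exponent $N/(N-2)$ is exactly the scaling-critical one for which $\int |y|^{2-N} \cdot (|y|^{2-N})^{N/(N-2)}\, dy = \int |y|^{2-N - N}\, dy = \int |y|^{-N}\, dy$ diverges (logarithmically). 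This is the self-improving contradiction: either $u$ decays slower than $|x|^{2-N}$, contradicting $u \to 0$ fast enough for the potential to converge, or $u \simeq |x|^{2-N}$, and then the critical-exponent integral diverges, contradicting \eqref{finiteint}.

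I expect the main obstacle to be making the truncation and the "exterior domain" bookkeeping rigorous: the representation formulas in Section 2.1 are stated for distributional solutions on all of $\R^N$, so one must either extend $u$ (e.g. by multiplying by a cutoff and controlling the resulting error terms, which are harmonic away from an annulus and hence $O(|x|^{2-N})$), or re-derive the Riesz potential lower bound directly on the exterior domain via the maximum principle comparing $u$ with solutions of $-\Delta w = c_0 u^{N/(N-2)}$ on large annuli $\{R_1 < |x| < R\}$ with zero boundary data and letting $R \to \infty$. The cleanest route is probably the latter: fix the annulus comparison to get $u(x) \ge c(N) c_0 \int_{R_1 < |y| < R} G_R(x,y) u(y)^{N/(N-2)}\, dy$ with $G_R$ the Green function of the annulus, let $R \to \infty$ so $G_R \uparrow$ the Green function of the exterior domain (comparable to $|x-y|^{2-N}$ for $y$ away from the inner boundary), and then run the Keller–Osserman/bootstrap dichotomy above. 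The exponent $N/(N-2)$ enters precisely as the borderline case where the iteration neither converges nor diverges polynomially, which is why the hypothesis is sharp — this is the point I would flag as requiring the most care, and one can simply cite \cite[Theorem 2.1]{AS11} for it, since the lemma is quoted rather than proved in the present paper.
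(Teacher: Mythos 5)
The paper does not prove this lemma; it is simply quoted from \cite[Theorem 2.1]{AS11} and used as a black box, so there is no ``paper's proof'' to match against. That said, your sketch has a concrete gap at the step where you claim the potential diverges.

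With the (correct) lower bound $u(y)\geq c|y|^{2-N}$ on an exterior domain, one has $u(y)^{N/(N-2)}\geq c|y|^{-N}$, and then
$$
\int_{|y|>R_1}|x-y|^{2-N}\,u(y)^{N/(N-2)}\,dy\ \gtrsim\ \int_{|y|>R_1}|y|^{2-N}|y|^{-N}\,dy
\ =\ \omega_{N-1}\int_{R_1}^\infty r^{\,2-2N}\,r^{\,N-1}\,dr\ =\ \omega_{N-1}\int_{R_1}^\infty r^{\,1-N}\,dr,
$$
which \emph{converges} for $N\geq 3$ (you wrote the exponent as $|y|^{-N}$, dropping the Jacobian, which is where the claimed logarithmic divergence appears). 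So the dichotomy ``either the potential diverges, or $u$ decays too slowly'' does not close as stated: plugging the always-available bound $u\gtrsim|x|^{2-N}$ into the Riesz potential yields a convergent integral, not a contradiction. The exponent $\frac{N}{N-2}$ is indeed sharp, but seeing this requires the missing bootstrap: $u\gtrsim|x|^{2-N}$ forces $-\Delta u\gtrsim|x|^{-N}$, whose Newtonian potential over an exterior region grows like $|x|^{2-N}\log|x|$, hence $u\gtrsim|x|^{2-N}\log|x|$; iterating produces arbitrarily high powers of $\log|x|$, and it is \emph{this} unbounded self-improvement (or an equivalent maximum-principle comparison, which is the route actually taken in \cite{AS11}) that yields the contradiction. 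Your proposal also contains loose points in the truncation reduction (the $\min$ of a supersolution is not automatically a classical supersolution of the truncated equation) and in extending the whole-space representation formulas of Section~2.1 to an exterior domain, but the central obstruction is the erroneous divergence claim.
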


\begin{lemma}{\rm (see \cite[Theorem 4.1]{GS81})}\label{gspruck}
Let $u\in C^2(\R^N) $, $N\geq 3$,  be a nonnegative solution of 
$$
-\Delta u=h(x) u^{p}\quad\mbox{ in }\R^N. 
$$
Assume $1\leq p<\frac{N+2}{N-2}$ and $h\in C^2(\R^N)$ satisfies
\begin{enumerate}
\item[\rm (i)] $\Delta h\geq 0$ in $\R^N$.
\item[\rm (ii)] $h(x)\geq c$ for $|x|>1$ large, for some constant $c>0$.
\item[\rm (iii)] There exists $C>0$ such that 
$\displaystyle |\nabla h(x)|\leq \frac{C}{|x|} h(x)$ for $|x|>1$ large.
\end{enumerate}
Then $u\equiv 0$. 
\end{lemma}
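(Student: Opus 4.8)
This is a Liouville-type statement in the spirit of Gidas and Spruck, and I would split the argument according to the size of $p$. In the sub-range $1\le p\le\tfrac{N}{N-2}$ a short argument works: by hypothesis (ii) we have $-\Delta u=h(x)u^p\ge c\,u^p$ for $|x|$ large, so $-\Delta u\ge f(u)$ on an exterior domain with $f(t)=c\,t^p$; since $p-\tfrac{N}{N-2}\le 0$ one has $\liminf_{t\to0}f(t)t^{-N/(N-2)}=\liminf_{t\to0}c\,t^{\,p-N/(N-2)}>0$, so Lemma \ref{arms} excludes a solution positive on an exterior domain, after which $u\equiv0$ everywhere follows ($u$ must vanish at some point, it is superharmonic near that point since $-\Delta u\ge c u^p\ge 0$ there, hence $u\equiv0$ near it, and one finishes by unique continuation). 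The content of the lemma therefore lies in the range $\tfrac{N}{N-2}<p<\tfrac{N+2}{N-2}$; note that here the one-sided inequality $-\Delta u\ge c u^p$ alone does \emph{not} suffice, since it admits positive supersolutions of the form $A|x|^{-2/(p-1)}$ with $A$ small on exterior domains — one genuinely needs the equation together with the global structure.

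For that range the plan is to run the Gidas--Spruck method of localized energy estimates. Since $h\simeq1$ at infinity, one may work with test functions weighted by $h^{-1}$, which effectively normalises the coefficient: hypothesis (iii) gives $|\nabla(h^{-1})|\le C/|x|$ for $|x|$ large, and together with (i) it gives $\Delta(h^{-1})=-h^{-2}\Delta h+2h^{-3}|\nabla h|^2\le C/|x|^2$, so all weight-derivative terms produced by the integrations by parts are of lower order and absorbable — in effect one argues as if $h$ were a positive constant. Then, for $\beta\ge1$ and a cutoff $\zeta\in C_c^\infty(B_{2R})$ with $\zeta\equiv1$ on $B_R$ and $|\nabla\zeta|\le C/R$, testing the equation by $u^{2\beta-1}\zeta^2$ (against $h^{-1}\,dx$) and using Young's inequality gives
\[
\int|\nabla(u^\beta)|^2\zeta^2\,dx\;\le\;C\!\int u^{\,p+2\beta-1}\zeta^2\,dx+\frac{C}{R^2}\!\int_{B_{2R}}u^{2\beta}\,dx .
\]

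Inserting this into the Sobolev inequality applied to $u^\beta\zeta$ and estimating the right-hand side by H\"older against a power of $u$ whose integral is already controlled, one reaches a scaling-type bound
\[
\int_{B_R}u^{\,q}\,dx\;\le\;C\,R^{\,N-\frac{2q}{p-1}}
\]
for $q$ in a nonempty interval. The decisive point is that, \emph{because} $p<\tfrac{N+2}{N-2}$, this interval can be enlarged by iterating — each pass raises $q$ by a definite factor, and the gain survives the H\"older/Sobolev losses only under subcriticality — until some $q_0>\tfrac{N(p-1)}{2}$ is reached, for which the exponent $N-\tfrac{2q_0}{p-1}$ is negative; letting $R\to\infty$ then forces $\int_{\R^N}u^{q_0}=0$, i.e. $u\equiv0$. (Alternatively the same integral bounds feed a Rellich--Pohozaev identity on $B_R$, whose bulk coefficient $\tfrac{N-2}{2}-\tfrac{N}{p+1}$ is strictly negative in the subcritical range, whose $\int(x\cdot\nabla h)u^{p+1}$ term is controlled via (i) and (iii), and whose $\partial B_R$-terms vanish along a suitable sequence $R_j\to\infty$; this again gives $u\equiv0$.)

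The main obstacle is this iteration: tracking the precise admissible interval for $q$ through the repeated Sobolev and H\"older steps, and checking both that the iteration stays inside it and that it converges — this is exactly where subcriticality is indispensable, and where a naive choice of test function such as $u^p\zeta^2$ would fail. A secondary technical nuisance is that, with no a priori growth bound on $u$ available, the tail of the iteration must be closed by a telescoping argument (or one first establishes a Gidas--Spruck-type local $L^\infty$ bound for $u$); and in the Pohozaev alternative one must genuinely show the boundary terms tend to $0$ rather than merely remain bounded, which once more rests on the integral estimates above.
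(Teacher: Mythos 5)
The paper offers no proof of this lemma: it is quoted as Theorem~4.1 of Gidas and Spruck \cite{GS81}, so there is no internal argument to compare yours against. Judged on its own terms, your sketch identifies the right high-level mechanism — subcriticality makes a scaling exponent negative, and hypotheses (i)--(iii) keep all $h$-dependent terms lower order — and the Pohozaev alternative you mention is close in spirit to what Gidas and Spruck actually do. The Moser-type iteration you present as the main route, however, has a genuine gap.

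Testing with $u^{2\beta-1}\zeta^2$ places $\int u^{p+2\beta-1}\zeta^2$ on the right, a \emph{strictly higher} power of $u$ than what Sobolev applied to $u^\beta\zeta$ controls; the scheme is circular, and the claimed ``each pass raises $q$ by a definite factor'' never gets started. Indeed the plain cutoff/H\"older argument only yields
\[
\int_{B_R}u^{p}\,dx\le C\,R^{\,N-\frac{2p}{p-1}},
\]
and $N-\frac{2p}{p-1}<0$ only when $p<\frac{N}{N-2}$ — precisely why you had to split off the lower range. To pass $\frac{N}{N-2}$, Gidas and Spruck do \emph{not} iterate; they combine the equation with a Bochner/Rellich identity for an auxiliary vector field built from $\nabla u$ and a power of $u$, and this single computation gives
\[
\int_{B_R}u^{p+1}\,dx\le C\,R^{\,N-\frac{2(p+1)}{p-1}},
\]
where $N-\frac{2(p+1)}{p-1}<0$ exactly for $p<\frac{N+2}{N-2}$, so one pass and $R\to\infty$ finish. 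This also makes your initial case split superfluous, since the Bochner estimate covers $1\le p<\frac{N+2}{N-2}$ uniformly. The same missing ingredient resurfaces in your Pohozaev alternative: showing the $\partial B_{R_j}$-terms actually \emph{vanish} along some $R_j\to\infty$ (not merely stay bounded) is what the Bochner estimate supplies, not a naive cutoff. A smaller slip: Lemma~\ref{arms} excludes \emph{positive} exterior solutions, so with $u$ merely nonnegative and $h$ positive only for large $|x|$, the strong-maximum-principle plus unique-continuation finish you gesture at needs more care than a single line.
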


\begin{lemma}\label{funcw}
For $a>0$ denote $W_a(x)=e^{-a\sqrt{1+|x|^2}}$. Then  
\begin{equation}\label{WA}
 (\lambda -a^2) W_a \leq -\Delta W_a+\lambda W_a(x)\leq (\lambda +Na)W_a\quad\mbox{ in }\R^N.
\end{equation}
\end{lemma}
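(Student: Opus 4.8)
Lemma \ref{funcw} asks for pointwise bounds on $-\Delta W_a + \lambda W_a$ where $W_a(x) = e^{-a\sqrt{1+|x|^2}}$. The plan is to compute $\Delta W_a$ directly. Writing $r = |x|$ and $\phi(r) = \sqrt{1+r^2}$, we have $W_a = e^{-a\phi}$, so by the chain rule $\nabla W_a = -a\,e^{-a\phi}\nabla\phi$ and
$$
\Delta W_a = -a\,e^{-a\phi}\Delta\phi + a^2 e^{-a\phi}|\nabla\phi|^2 = \left(a^2|\nabla\phi|^2 - a\Delta\phi\right)W_a.
$$
Hence $-\Delta W_a + \lambda W_a = \left(\lambda - a^2|\nabla\phi|^2 + a\Delta\phi\right)W_a$, and the whole lemma reduces to showing the scalar inequality
$$
-a^2 \;\le\; -a^2|\nabla\phi|^2 + a\Delta\phi \;\le\; Na \qquad\text{on }\R^N.
$$

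First I would compute the two ingredients. Since $\phi(r)=\sqrt{1+r^2}$, one gets $\nabla\phi = x/\sqrt{1+|x|^2}$, so $|\nabla\phi|^2 = |x|^2/(1+|x|^2) = 1 - 1/(1+|x|^2)$, which lies in $[0,1)$. For the Laplacian, $\Delta\phi = \operatorname{div}\!\big(x/\sqrt{1+|x|^2}\big) = \frac{N}{\sqrt{1+|x|^2}} - \frac{|x|^2}{(1+|x|^2)^{3/2}} = \frac{N + (N-1)|x|^2}{(1+|x|^2)^{3/2}}$, which is positive and, since $N+(N-1)|x|^2 \le N(1+|x|^2)$, satisfies $0 < \Delta\phi \le N/\sqrt{1+|x|^2} \le N$.

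Now the upper bound: since $|\nabla\phi|^2 \ge 0$ and $a>0$, we have $-a^2|\nabla\phi|^2 + a\Delta\phi \le a\Delta\phi \le Na$, giving the right-hand inequality after multiplying by $W_a>0$. For the lower bound: since $\Delta\phi > 0$ and $|\nabla\phi|^2 < 1$, we have $-a^2|\nabla\phi|^2 + a\Delta\phi > -a^2|\nabla\phi|^2 > -a^2$, giving the left-hand inequality. Multiplying through by $W_a$ yields \eqref{WA}.

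There is really no serious obstacle here — the result is a direct computation. The only mild subtlety is keeping track of signs (the sign of $\Delta\phi$ versus $-a^2|\nabla\phi|^2$) and noting that both bounds $0<|\nabla\phi|^2<1$ and $0<\Delta\phi\le N$ are uniform in $x\in\R^N$ and require no large-$|x|$ restriction, so the inequalities hold everywhere on $\R^N$ as claimed. One should also double-check the endpoint behaviour at $x=0$ (where $|\nabla\phi|^2=0$ and $\Delta\phi = N$), which is consistent with both inequalities.
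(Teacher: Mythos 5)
Your proof is correct and takes essentially the same route as the paper: a direct computation of $-\Delta W_a$ (the paper expands it as $\big(\lambda-a^2+\frac{a^2}{1+|x|^2}+\frac{a}{(1+|x|^2)^{3/2}}+\frac{(N-1)a}{\sqrt{1+|x|^2}}\big)W_a$, which is exactly your $\big(\lambda-a^2|\nabla\phi|^2+a\Delta\phi\big)W_a$ with the terms regrouped), followed by elementary pointwise bounds. Your packaging via $|\nabla\phi|^2\in[0,1)$ and $0<\Delta\phi\le N$ is a slightly tidier way to organize the same estimate.
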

\begin{proof}
By direct calculation we have 
$$
-\Delta W_a(x)+\lambda  W_a(x)=\Big( \lambda-a^2+\frac{a^2}{1+|x|^2}+\frac{a}{\sqrt{(1+|x|^2})^3}+  \frac{(N-1)a}{\sqrt{1+|x|^2}}\Big)W_a(x).
$$
Now, the estimate \eqref{WA} follows immediately.
\end{proof}

\begin{lemma}\label{eax}
Let $s>0$ and $\psi\in C(\R^N)$ be a positive function such that $\psi(x)\simeq e^{-\gamma |x|}$ for some $\gamma>0$. Then, for any $\mu>\big(\frac{\gamma}{s+1}\big)^2$ the problem
\begin{equation}\label{singp}
\begin{cases}
\displaystyle   -\Delta v+\mu v=\psi(x) v^{-s} \quad\mbox{ in }\R^N,\\
v(x)\to 0\mbox{ as }|x|\to \infty,
\end{cases}
\end{equation}
has a unique solution $v\in C^2(\R^N)$. Moreover, $v(x)\simeq W_{\frac{\gamma}{s+1}} (x)$.
\end{lemma}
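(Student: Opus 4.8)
\textbf{Proof proposal for Lemma \ref{eax}.}

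The plan is to construct the solution by sub- and super-solutions, using the explicit barrier $W_a$ from Lemma \ref{funcw} with $a=\frac{\gamma}{s+1}$, and then to upgrade to classical regularity and prove uniqueness. First I would fix $a=\frac{\gamma}{s+1}$ and seek constants $0<\underline{c}<\overline{c}$ so that $\underline v=\underline c\,W_a$ and $\overline v=\overline c\,W_a$ are respectively a sub- and a super-solution of \eqref{singp}. Since $\psi(x)\simeq e^{-\gamma|x|}$ and $W_a(x)^{-s}=e^{as\sqrt{1+|x|^2}}$, we have $\psi(x)W_a(x)^{-s}\simeq e^{-\gamma|x|+as\sqrt{1+|x|^2}}$, which is comparable to $e^{-\frac{\gamma}{s+1}|x|}\simeq W_a(x)$ precisely because $\gamma-as=\frac{\gamma}{s+1}=a$; that is the arithmetic reason for the choice of $a$. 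Combining this with the two-sided bound \eqref{WA} from Lemma \ref{funcw}, namely $(\mu-a^2)W_a\le -\Delta W_a+\mu W_a\le(\mu+Na)W_a$, the sub/super-solution inequalities
$$
-\Delta(\overline c\,W_a)+\mu(\overline c\,W_a)\ge \psi\,(\overline c\,W_a)^{-s},\qquad
-\Delta(\underline c\,W_a)+\mu(\underline c\,W_a)\le \psi\,(\underline c\,W_a)^{-s}
$$
reduce to finite inequalities among $\underline c,\overline c$, the constants in $\psi(x)\simeq e^{-\gamma|x|}$, and $\mu-a^2$ and $\mu+Na$. Here the hypothesis $\mu>a^2=\big(\tfrac{\gamma}{s+1}\big)^2$ is used to guarantee $\mu-a^2>0$, so that $\overline c$ can be taken large and $\underline c$ small; one also checks $\underline c\le\overline c$ can be arranged. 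This gives an ordered pair of barriers with the desired decay $W_a$.

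Next I would produce a solution between these barriers. Because the nonlinearity $t\mapsto \psi(x)t^{-s}$ is singular at $t=0$ but smooth and decreasing on $(0,\infty)$, and we have an ordered sub/super-solution pair bounded away from $0$ on every compact set (each $W_a>0$), a standard monotone iteration works: solve $-\Delta v_{k+1}+(\mu+M)v_{k+1}=\psi\,v_k^{-s}+Mv_k$ on expanding balls $B_R$ (with, say, $v_{k+1}=\underline v$ on $\partial B_R$) for a suitable constant $M$ making the right-hand side monotone in the relevant range, starting from $v_0=\overline v$; pass $R\to\infty$ via interior elliptic estimates and a diagonal argument. The monotone limit $v$ satisfies $\underline c\,W_a\le v\le \overline c\,W_a$, hence $v(x)\to 0$ as $|x|\to\infty$ and $v(x)\simeq W_a(x)$. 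Since the right-hand side $\psi\,v^{-s}$ is then locally Hölder (indeed locally bounded and continuous, as $v$ is bounded below on compacts), interior Schauder estimates give $v\in C^2(\R^N)$, so $v$ is a classical solution. Alternatively one can invoke the Schauder fixed point theorem on the order interval $[\underline v,\overline v]$ in a suitable weighted space, as the paper's general strategy suggests, but the monotone-iteration route is cleanest here.

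For uniqueness, suppose $v_1,v_2$ are two solutions with $v_i(x)\to0$ at infinity. I would use the convexity/monotonicity of the nonlinearity: the map $t\mapsto -t^{-s}$ is increasing, so the standard comparison argument for singular problems applies. Concretely, for $t>1$ the function $tv_1$ is a super-solution and $t^{-1}v_1$... — more robustly, set $t^*=\inf\{t\ge1: tv_1\ge v_2 \text{ in }\R^N\}$, which is finite because both behave like $W_a$; if $t^*>1$ one derives, using $-\Delta(t^*v_1)+\mu(t^*v_1)=t^*\psi v_1^{-s}\ge \psi(t^*v_1)^{-s}$ (as $t^*\ge1$ and $-s<0$ gives $t^*\ge (t^*)^{-s}$... wait, need $t^*\cdot v_1^{-s}\ge (t^*v_1)^{-s}=(t^*)^{-s}v_1^{-s}$, i.e. $t^*\ge(t^*)^{-s}$, true for $t^*\ge1$), that $t^*v_1$ is a super-solution while $v_2$ solves the equation, and $t^*v_1-v_2\to0$ at infinity with $t^*v_1-v_2\ge0$; the strong maximum principle then forces $t^*v_1\equiv v_2$ or $t^*v_1>v_2$ strictly, and in the latter case a compactness argument at infinity (both decay like $W_a$ with the same rate) lets one lower $t^*$, a contradiction. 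Hence $v_1\ge v_2$, and by symmetry $v_1\equiv v_2$.

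I expect the main obstacle to be the uniqueness argument at infinity: since solutions only satisfy $v(x)\to0$, not an a priori two-sided bound by $W_a$, one must first bootstrap an arbitrary solution to the sharp decay $v(x)\simeq W_a(x)$ (via the integral representation $v=\mathcal G_\mu*(\psi v^{-s})$ from Proposition \ref{prep}/Lemma \ref{Glem}, together with Lemma \ref{lem1}(i)) before the scaling comparison can be closed. Establishing that two-sided bound for \emph{every} decaying solution — in particular the lower bound, which needs $v$ to not decay too fast — is the delicate point, and it is also what delivers the final assertion $v(x)\simeq W_{\frac{\gamma}{s+1}}(x)$ for the solution just constructed.
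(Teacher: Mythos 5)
Your construction is essentially the paper's: fix $a=\frac{\gamma}{s+1}$, bracket $\psi$ between $mW_\gamma$ and $MW_\gamma$, observe $W_\gamma W_a^{-s}=W_{\gamma-as}=W_a$, and use the two-sided estimate \eqref{WA} to see that $\underline v = cW_a$ and $\overline v = CW_a$ are ordered sub- and super-solutions when $c^{s+1}\le m/(\mu+Na)$ and $C^{s+1}\ge M/(\mu-a^2)$, which is exactly where $\mu>a^2$ enters. The paper then solves on balls $B_n$ with boundary data $\underline v$ and shows $\{v_n\}$ is increasing via a direct maximum-principle comparison; your monotone iteration starting from $\overline v$ with the $+M$ linearization is a variant of the same device and is equally valid. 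So the existence half of your proposal matches the paper in substance.

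Where you diverge is uniqueness, and there your proposal is needlessly complicated and, as written, contains a gap you yourself flag. The paper's uniqueness is ``the same argument as above'': if $v_1,v_2$ are two solutions with $v_i\to 0$ at infinity and $\{v_1>v_2\}\ne\emptyset$, then since $v_1-v_2\to 0$ and is continuous, its positive supremum is attained at some $x_0\in\R^N$; there $\Delta(v_1-v_2)(x_0)\le 0$, so
\[
0\le -\Delta(v_1-v_2)(x_0)+\mu(v_1-v_2)(x_0)=\psi(x_0)\bigl(v_1(x_0)^{-s}-v_2(x_0)^{-s}\bigr)<0,
\]
a contradiction, because $t\mapsto t^{-s}$ is strictly decreasing. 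This uses only $v_i\to0$; no sharp decay rate is needed. Your $t^*$-scaling argument, by contrast, requires a two-sided $W_a$ bound on \emph{every} decaying solution before $t^*=\inf\{t\ge1: tv_1\ge v_2\}$ is even well defined and before ``lowering $t^*$'' can be justified, and you correctly note that establishing the lower bound $v\gtrsim W_a$ a priori is delicate. That difficulty is an artifact of the route you chose; the direct comparison sidesteps it entirely. In short: replace your uniqueness paragraph with the one-line strong-monotonicity comparison, and the proof aligns with the paper.
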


\begin{proof} Since $\psi(x)\simeq e^{-\gamma |x|}$, we can find $M>m>0$ such that 
\begin{equation}\label{mgg}
mW_\gamma(x)\leq \psi(x)\leq MW_\gamma(x)\quad\mbox{ for all }x\in \R^N.
\end{equation}
Define now 
$$
a=\frac{\gamma}{s+1}\, ,\quad C=\Big(\frac{M}{\mu-a^2}  \Big)^{\frac{1}{s+1}} \, ,\quad c=\Big(\frac{m}{\mu+Na}  \Big)^{\frac{1}{s+1}}.
$$ 
Then, $\mu>\big(\frac{\gamma}{s+1}\big)^2$ implies  $\mu>a^2$ and $C>c>0$. Define now 
$\underline v=cW_a$ and $\overline v=CW_a$. Using the estimates \eqref{WA} together with \eqref{mgg} we find 
$$
-\Delta \underline v+\mu \underline v\leq c(\mu+Na)W_a\leq mW_\gamma \, \underline v^{-s}\leq \psi(x) \underline v^{-s} \quad\mbox{ in }\R^N,
$$
$$
-\Delta \overline v+\mu \overline v\geq C(\mu-a^2)W_a\geq  M W_\gamma \, \overline v^{-s}\geq \psi(x) \overline v^{-s} \quad\mbox{ in }\R^N.
$$
Thus $(\underline v, \overline v)$ is an ordered pair of sub and super-solution of \eqref{singp}.  
By the standard sub and super-solution method (see, e.g., \cite{GR08book}) for any $n\geq 1$ there exists $v_n\in C^2(\overline B_n)$ such that 
\begin{equation}\label{ss1}
\begin{cases}
-\Delta  v_n+\mu v_n = \psi(x) v_n^{-s} &\quad\mbox{ in }B_n,\\[0.05in]
\; \; v_n=\underline v &\quad\mbox{ on }\partial B_n,\\[0.05in]
\;\; \underline v\leq v_n\leq \overline v&\quad\mbox{ in }B_n.
\end{cases}
\end{equation}
We claim that $v_n\leq v_{n+1}$ in $\overline B_n$. The inequality clearly holds on $\partial B_n$. Assuming that the set $\{x\in B_n: v_n(x)>v_{n+1}(x)\}$ is nonempty, then the maximum of $v_n-v_{n+1}$ on $\overline B_n$ occurs in $B_n$. At this maximum point, say $x_0\in B_n$, we have $(v_n-v_{n+1})(x_0)>0$ and $\Delta (v_n-v_{n+1})(x_0)\leq 0$.  
$$
0\leq -\Delta (v_n-v_{n+1})(x_0)+\mu  (v_n-v_{n+1})(x_0)\leq \psi(x_0)\Big(v_n(x_0)^{-s}-v_{n+1}(x_0)^{-s}\Big)<0,
$$
contradiction. Thus, $\{v_n(x)\}_{n\geq 1}$ is an increasing sequence bounded from above and from below by $\overline v(x)$ and $\underline v(x)$. We can define $v(x)=\lim_{n\to \infty}v_n(x)$. By standard elliptic arguments we further deduce that $v$ is a solution of \eqref{singp} while  $\underline v\leq v_n\leq \overline v$ yields $v(x)\simeq W_{\frac{\gamma}{s+1}}(x)$. The uniqueness of $v$ as a solution of \eqref{singp} follows in the same way as above. 
\end{proof}

\begin{lemma}\label{funcz}
For $a>0$ denote $Z_a(x)=(1+|x|^2)^{-a/2}$. Then  
\begin{equation}\label{ZA}
 a(N-a-2) Z_{a+2} \leq -\Delta Z_a(x) \leq aN Z_{a+2}\quad\mbox{ in }\R^N.
\end{equation}
\end{lemma}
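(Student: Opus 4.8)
The plan is to argue by a direct computation, exactly in the spirit of the proof of Lemma \ref{funcw}. First I would differentiate $Z_a(x)=(1+|x|^2)^{-a/2}$ twice: one finds $\partial_i Z_a = -a\,x_i(1+|x|^2)^{-\frac a2-1}$ and, after summing the second derivatives,
\[
\Delta Z_a(x) = -aN(1+|x|^2)^{-\frac a2-1} + a(a+2)\,|x|^2\,(1+|x|^2)^{-\frac a2-2}.
\]
The key algebraic step is then to write $|x|^2 = (1+|x|^2)-1$ in the last term, so that every factor becomes a pure power of $1+|x|^2$. This converts the expression into the exact identity
\[
-\Delta Z_a(x) = a(N-a-2)\,Z_{a+2}(x) + a(a+2)\,Z_{a+4}(x),\qquad x\in\R^N,
\]
since $(1+|x|^2)^{-\frac{a+2}{2}}=Z_{a+2}$ and $(1+|x|^2)^{-\frac{a+4}{2}}=Z_{a+4}$.

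Second, I would deduce both inequalities in \eqref{ZA} from this single identity using only the two elementary facts $Z_{a+4}(x)>0$ and $Z_{a+4}(x)\le Z_{a+2}(x)$ for all $x\in\R^N$ (the latter because $(1+|x|^2)^{-1}\le 1$). Discarding the nonnegative term $a(a+2)Z_{a+4}$ gives the lower bound $-\Delta Z_a \ge a(N-a-2)Z_{a+2}$. For the upper bound, one checks that $aN\,Z_{a+2} - (-\Delta Z_a) = a(a+2)\bigl(Z_{a+2}-Z_{a+4}\bigr)\ge 0$, which yields $-\Delta Z_a \le aN\,Z_{a+2}$. Both estimates hold with no restriction on the sign of $N-a-2$.

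I do not expect any genuine obstacle here: the statement is a one-line consequence of the exact formula for $-\Delta Z_a$. The only thing requiring mild care is the bookkeeping of exponents when rewriting $|x|^2(1+|x|^2)^{-\frac a2-2}$ as a combination of $Z_{a+2}$ and $Z_{a+4}$, and the observation that both desired inequalities are one-sided consequences of that identity, so no case analysis on the sign of $N-a-2$ is needed.
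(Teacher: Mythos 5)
Your proof is correct and takes essentially the same approach as the paper: a direct computation of $-\Delta Z_a$ followed by an elementary bound. The paper writes the result as $-\Delta Z_a=a\{N+(N-a-2)|x|^2\}Z_{a+4}$ and then squeezes the bracket between $(N-a-2)(1+|x|^2)$ and $N(1+|x|^2)$, whereas you split it as $a(N-a-2)Z_{a+2}+a(a+2)Z_{a+4}$; these are the same identity rearranged, and both yield \eqref{ZA} with no sign condition on $N-a-2$.
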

\begin{proof}
By direct calculation we have 
$$
-\Delta Z_a(x)=a\big\{N+(N-a-2)|x|^2\big\} Z_{a+4}(x) \quad\mbox{ in }\R^N
$$
and
$$
(N-a-2)(|x|^2+1)\leq N+(N-a-2)|x|^2\leq N (|x|^2+1) \quad\mbox{ in }\R^N.
$$
Combining the above estimates we deduce \eqref{ZA}.
\end{proof}

We next establish a counterpart result to Lemma \ref{eax} for the problem
\begin{equation}\label{singpz}
\begin{cases}
\displaystyle   -\Delta v=\phi(x) v^{-s} \quad\mbox{ in }\R^N,\\
v(x)\to 0\mbox{ as }|x|\to \infty.
\end{cases}
\end{equation}

\begin{lemma}\label{zz}
Let $s>0$  and $\phi\in C(\R^N)$ be a positive function.  
\begin{enumerate}
\item[{\rm (i)}] Assume 
\begin{equation}\label{eqga}
\phi(x)\geq c|x|^{-\gamma}\quad\mbox{ in }\; \R^N\setminus B_1,
\end{equation}
for $c>0$ and $0<\gamma\leq 2$. Then \eqref{singpz} has no positive $C^2$-solutions.
\item[{\rm (ii)}] Assume $\phi\simeq |x|^{-\gamma}$ where $2<\gamma<(N-2)s+N$. Then \eqref{singpz} 
has a unique positive solution $v\in C^2(\R^N)$. Moreover, $v(x)\simeq Z_{\frac{\gamma-2}{s+1}}(x)$.
\end{enumerate}
\end{lemma}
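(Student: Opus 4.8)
The plan is to mirror the structure of the proof of Lemma \ref{eax}, replacing the exponential comparison function $W_a$ by the algebraic comparison function $Z_a$ and using the estimates \eqref{ZA} in place of \eqref{WA}.

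For part (i), I would argue by contradiction. If $v\in C^2(\R^N)$ is a positive solution of \eqref{singpz} with $v(x)\to 0$ as $|x|\to\infty$, then in particular $v\leq 1$ outside some large ball, so $v^{-s}\geq 1$ there, whence $-\Delta v=\phi(x)v^{-s}\geq c|x|^{-\gamma}$ in an exterior domain. Since $0<\gamma\leq 2$, one has $|x|^{-\gamma}\geq |x|^{-2}$ for $|x|\geq 1$, so $-\Delta v\geq c|x|^{-2}$ in an exterior domain. I would then invoke the integral representation of Proposition \ref{prep0} (or Lemma \ref{Glem0}): since $v\geq 0$ is superharmonic near infinity with $\mathrm{essinf}\,v=0$, one gets a lower bound $v(x)\gtrsim \int_{|y|\geq 2|x|}|x-y|^{2-N}|y|^{-\gamma}\,dy$, and for $\gamma\leq 2$ this integral diverges (the integrand behaves like $|y|^{2-N-\gamma}$ with $2-N-\gamma\geq -N$ failing to be integrable at infinity when $\gamma\leq 2$), contradicting finiteness of $v(x)$. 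An even cleaner route: $\phi(x)v^{-s}\geq c|x|^{-\gamma}\geq$ a positive constant times a function $f(v)$ with $\liminf_{t\to 0} f(t)t^{-N/(N-2)}>0$ is not directly available, so I would instead just use the representation-based divergence argument, which is the real obstacle in (i).

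For part (ii), I would reproduce the sub/supersolution scheme of Lemma \ref{eax}. Since $\phi\simeq |x|^{-\gamma}$, choose $M>m>0$ with $m(1+|x|^2)^{-\gamma/2}\leq \phi(x)\leq M(1+|x|^2)^{-\gamma/2}$ for all $x$; equivalently $m\,Z_\gamma\leq\phi\leq M\,Z_\gamma$, noting $Z_\gamma(x)\simeq|x|^{-\gamma}$. Set $a=\frac{\gamma-2}{s+1}$, which is positive since $\gamma>2$, and note that $\gamma<(N-2)s+N$ is exactly the condition $N-a-2>0$, i.e.\ $a<N-2$, so that $a(N-a-2)>0$ and the lower bound in \eqref{ZA} is usable. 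Then define $C=\big(\frac{M}{a(N-a-2)}\big)^{1/(s+1)}$ and $c=\big(\frac{m}{aN}\big)^{1/(s+1)}$; after possibly enlarging $M$ we may assume $C>c>0$. With $\underline v=c\,Z_a$ and $\overline v=C\,Z_a$, the estimates \eqref{ZA} give $-\Delta\underline v\leq c\,aN\,Z_{a+2}$ and $-\Delta\overline v\geq C\,a(N-a-2)\,Z_{a+2}$; since $Z_a^{-s}Z_{a+2}=Z_{2-as}=Z_{\gamma}$ by the choice of $a$ (indeed $2-as = 2-(\gamma-2)s/(s+1)$—wait, one checks $as+a = \gamma-2$ so $a(s+1)=\gamma-2$ and hence $Z_a^{-s}\cdot Z_{a+2}=Z_{a+2-as}$ with $a+2-as=2+a(1-s)$; I would instead note $Z_a^{-s}=Z_{-as}$ and $Z_{-as}Z_{a+2}=Z_{a+2-as}$, and the identity needed is $a+2-as=\gamma$? that forces $a(1-s)=\gamma-2$, which is wrong—so the correct bookkeeping is: we want $\phi\,\overline v^{-s}\simeq Z_\gamma\,Z_a^{-s}=Z_{\gamma-as}$ to match $Z_{a+2}$, i.e.\ $\gamma-as=a+2$, i.e.\ $\gamma-2=a(s+1)$, which is precisely $a=\frac{\gamma-2}{s+1}$, consistent). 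Thus
\[
-\Delta\underline v\leq c\,aN\,Z_{a+2}= m\,Z_\gamma\,\underline v^{-s}\leq \phi\,\underline v^{-s},\qquad
-\Delta\overline v\geq C\,a(N-a-2)\,Z_{a+2}= M\,Z_\gamma\,\overline v^{-s}\geq \phi\,\overline v^{-s}
\]
in $\R^N$, so $(\underline v,\overline v)$ is an ordered sub/supersolution pair. Solving the Dirichlet problems on balls $B_n$ with boundary data $\underline v$, one obtains $v_n\in C^2(\overline B_n)$ with $\underline v\leq v_n\leq\overline v$; the monotonicity $v_n\leq v_{n+1}$ follows by the same maximum-principle argument as in Lemma \ref{eax} (at an interior positive maximum of $v_n-v_{n+1}$ one gets $0\leq-\Delta(v_n-v_{n+1})=\phi(v_n^{-s}-v_{n+1}^{-s})<0$). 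Passing to the limit $v=\lim v_n$ and using interior elliptic estimates gives a solution of \eqref{singpz} with $\underline v\leq v\leq\overline v$, hence $v(x)\simeq Z_a(x)=Z_{\frac{\gamma-2}{s+1}}(x)$ and $v(x)\to0$ at infinity. Uniqueness follows exactly as in Lemma \ref{eax}: given two solutions, the same maximum-principle comparison shows each dominates the other. The main obstacle is the correct choice of the exponent $a$ and the verification that $\gamma<(N-2)s+N$ is exactly what makes the lower estimate in \eqref{ZA} have a positive coefficient; once that bookkeeping is pinned down, the rest is the routine sub/supersolution iteration already carried out for Lemma \ref{eax}.
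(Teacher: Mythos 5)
Your proof is correct and follows essentially the same route as the paper: part (i) relies on the integral representation of Proposition \ref{prep0} and the divergence of $\int |y|^{2-N-\gamma}\,dy$ at infinity when $\gamma\le 2$ (the paper integrates over $|y|>|x|$ with $|x-y|\le 2|y|$, a cosmetic variant of your $|y|\ge 2|x|$), and part (ii) is the same sub/supersolution iteration with comparison functions $c\,Z_a$ and $C\,Z_a$, $a=\frac{\gamma-2}{s+1}$, the constraint $\gamma<(N-2)s+N$ being precisely $a<N-2$ so that the lower coefficient $a(N-a-2)$ in \eqref{ZA} is positive. Your intermediate exponent bookkeeping wobbles but self-corrects to the right identity $\gamma-as=a+2$, so the argument stands.
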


\begin{proof} (i) Since $v(y)\to 0$ as $|y|\to \infty$,  condition \eqref{rep0} with $\ell=0$ holds. Also, $v^{-s}(y)\geq c$ in $\R^N\setminus B_1$, where $c>0$ is a constant.  

Let $x\in \R^N$, $|x|>1$. By Proposition \ref{prep0}  we have 
$$
v(x)=\int_{\R^N}\mathcal{G}_0(x-y) \phi(y) v^{-s}(y) dy\geq C\int_{|y|>|x|}|x-y|^{2-N} |y|^{-\gamma}  ds.
$$
Note that for $|y|>|x|$ we have $|x-y|\leq 2|y|$ so the above estimate yields
$$
\begin{aligned}
v(x) &\geq C\int_{|y|>|x|} |y|^{2-N-\gamma} dy=C\int_{|x|}^\infty t^{1-\gamma} dt=\infty,
\end{aligned}
$$
since $0< \gamma\leq 2$.  Hence, \eqref{singpz} has no positive solutions.

\medskip

(ii) From our assumptions, there exist $M>m>0$ such that
$$
m Z_{\gamma}(x)\leq \phi(x)\leq M Z_\gamma(x)\quad\mbox{for all }x\in \R^N.
$$
Take now 
$$
a=\frac{\gamma-2}{s+1}\,, \quad C=\Big( \frac{M}{a(N-a-2)}\Big)^{\frac{1}{s+1}}\,,\quad c=\Big(\frac{m}{ aN}\Big)^{\frac{1}{s+1}}.
$$
Since $2<\gamma<(N-2)s+N$, we have $a\in (0, N-2)$ and $C,c>0$. By taking $M>1$ large and $m\in (0,1)$ small, we may assume $C>c>0$. 
By Lemma \ref{funcz} one can check that $\underline v(x)=cZ_a$ and $\overline v(x)=CZ_a$ satisfy 
$$
\begin{cases}
-\Delta \underline v\leq \phi(x) \underline v^{-s} \quad\mbox{ in }\R^N,\\[0.05in]
-\Delta \overline v\geq \phi(x) \overline v^{-s} \quad\mbox{ in }\R^N.
\end{cases}
$$
We next proceed as in Lemma \ref{eax}. First, for any $n\geq 1$, by the sub and super-solution method  there exists $v_n\in C^2(\overline B_n)$ such that 
$$
\begin{cases}
-\Delta  v_n = \phi(x) v_n^{-s} &\quad\mbox{ in }B_n,\\[0.05in]
\; \; v_n=\underline v &\quad\mbox{ on }\partial B_n,\\[0.05in]
\;\; \underline v\leq v_n\leq \overline v&\quad\mbox{ in }B_n.
\end{cases}
$$
Then, $v_n\leq v_{n+1}$ in $\overline B_n$. Finally, using standard elliptic arguments one has that $v(x)=\lim_{n\to \infty}v_n(x)$ is a solution of  \eqref{singpz}. The uniqueness follows as in the proof of Lemma \ref{eax}.
\end{proof}

\section{Proof of Theorem \ref{th1}}\label{pt1}

(i) Assume $0<p\leq 1$ and there exists $(u,v)$ a positive solution of the system \eqref{GM0}. Since $u(x)\to 0$ as $|x|\to \infty$, we have for $R>0$ large enough that $u^p\geq u$ in $\R^N\setminus B_R$. Let $\lambda_1(R)$ denote the first eigenvalue of $-\Delta$ in $\Omega_R:=B_{2R}\setminus \overline B_R$ subject to homogeneous Dirichlet boundary condition on $\partial \Omega_R$. Denote also by $\varphi_1\in C^2(\overline \Omega_R)$ the corresponding eigenfunction which we normalize as $\varphi_1>0$ in $\Omega_R$, that is
$$
\begin{cases}
-\Delta \varphi_1=\lambda_1(R) \varphi_1\,, \;\; \varphi_1>0 &\quad\mbox{ in }\Omega_R,\\
\;\;\; \varphi_1=0&\quad\mbox{ on }\partial\Omega_R.
\end{cases}
$$ 
Then, multiplying by $\varphi_1$ in the first equation of \eqref{GM0} and integrating over $\Omega_R$ we find
\begin{equation}\label{ph1}
-\int_{\Omega_R}\varphi_1 \Delta u\geq \int_{\Omega_R} (v^{-q}-\lambda)u\varphi_1 \geq \inf_{\overline \Omega_R} (v^{-q}-\lambda) \int_{\Omega_R} u\varphi_1.
\end{equation}
Using the Green's formula and the Hopf boundary point lemma we have
\begin{equation}\label{ph2}
-\int_{\Omega_R}\varphi_1 \Delta u=-\int_{\Omega_R} u \Delta \varphi_1 +\int_{\partial\Omega_R} \Big(u \frac{\partial \varphi_1}{\partial\nu}- \varphi_1 \frac{\partial u}{\partial\nu}\Big)\leq \lambda_1(R)\int_{\Omega_R} u\varphi_1,
\end{equation}
where $\nu$ denotes the outward unit normal vector to $\partial\Omega_R$. From \eqref{ph1} and \eqref{ph2} we find 
$$
\lambda_1(R)  \geq \inf_{\overline \Omega_R} (v^{-q}-\lambda).
$$
This yields a contradiction for $R>0$ large since $\lambda_1(R)\simeq R^{-2}$ as $R\to \infty$, while $\inf_{\overline \Omega_R} (v^{-q}-\lambda)\to \infty$ as $R\to \infty$. 

\medskip

\noindent (ii) Assume $p>1$, $\sigma>1$, $\mu> \big(\frac{m}{s+1}\big)^2\lambda$ and there exists $(u,v)$ a positive solution of \eqref{GM0} with $u(x)\simeq e^{-a|x|}$ for some $a>0$. Note that the fundamental solution $\mathcal{G}_\lambda$ of the operator $-\Delta +\lambda I$ satisfies \eqref{fsl}. By the maximum principle one has $u\geq c\mathcal{G}_\lambda$ in $\R^N\setminus B_1$, for some $c>0$. Using Lemma \ref{lem1}(i) it follows that 
$$
u(x)\geq c|x|^{-\frac{N-1}{2}}e^{-\sqrt{\lambda}|x|}\quad\mbox{ for all }|x|>1.
$$
Since $u(x)\simeq e^{-a|x|}$, it follows that $0<a\leq \sqrt{\lambda}$. 

Let us note that $v$ satisfies \eqref{singp} with $\psi(x)=u^m(x)\simeq e^{-\gamma|x|}$ where $\gamma=am$. We are now entitled to apply Lemma \ref{eax} since $a\leq \sqrt{\lambda}$ and by our hypothesis
$$
\mu>\Big(\frac{m}{s+1}\Big)^2\lambda\geq \Big(\frac{m a}{s+1}\Big)^2=\Big(\frac{\gamma}{s+1}\Big)^2.
$$ 
By Lemma \ref{eax} we deduce $v(x)\simeq e^{-\frac{am}{s+1}|x|}$. This yields
$$
c_1e^{\beta |x|}\leq  \frac{u^p(x)}{v^q(x)}\leq c_2e^{\beta |x|}\quad\mbox{ for all }x\in \R^N,
$$
where $c_2>c_1>0$ and $\beta=a\big(\frac{mq}{s+1}-p\big)$.
Using this fact in \eqref{eqG1}, by the estimates in Lemma \ref{lem1}(i) we find 
\begin{equation}\label{esta}
\begin{aligned}
u(x) &\geq \int_{\R^N}\mathcal{G}_\lambda(x-y) \frac{u^p(y)}{v^q(y)} dy\\[0.01in]
&\geq c \int_{\R^N}\mathcal{G}_\lambda(x-y) e^{\beta |y|} dy\\[0.01in]
&\geq c \int_{|x-y|>1} |x-y|^{-\frac{N-1}{2}}e^{-\sqrt{\lambda} |x-y|} \, e^{\beta |y|} dy.
\end{aligned}
\end{equation}
If $\beta\geq 0$ then $\beta|y|\geq \beta|x|-\beta|x-y|$ and \eqref{esta} yields
$$
\begin{aligned}
u(x) &  \geq c e^{\beta|x|}   \int_{|x-y|>1} |x-y|^{-\frac{N-1}{2}}e^{-(\sqrt{\lambda}+\beta) |x-y|} \, dy\\[0.01in]
&= c e^{\beta|x|}   \int_{|z|>1} |z|^{-\frac{N-1}{2}}e^{-(\sqrt{\lambda}+\beta) |z|} \, dy \\[0.01in]
&\geq c  e^{\beta|x|},
\end{aligned}
$$
which contradicts the fact that $u(x)\to 0$ as $|x|\to \infty$. 

\noindent If $\beta< 0$ then $\beta |y|\geq \beta|x|+\beta|x-y|$ and similar to above, from \eqref{esta} we deduce 
$$
\begin{aligned}
u(x) &  \geq c e^{\beta|x|}   \int_{|x-y|>1} |x-y|^{-\frac{N-1}{2}}e^{-(\sqrt{\lambda}-\beta) |x-y|} \, dy\\[0.01in]
&= c e^{\beta|x|}   \int_{|z|>1} |z|^{-\frac{N-1}{2}}e^{-(\sqrt{\lambda}-\beta) |z|} \, red dy \geq c  e^{\beta|x|} .
\end{aligned}
$$
Since $u(x)\simeq e^{-a|x|}$, it follows that $-a\geq \beta$, that is,
$$
p-1\geq \frac{mq}{s+1}\Longleftrightarrow \sigma\leq 1, \mbox{ contradiction.}
$$ 

\medskip

\noindent (iii) By our hypothesis, there exist $0<\alpha<\beta$ such that  
\begin{equation}\label{roro1}
\alpha W_a\leq \rho(x)\leq \beta W_a\quad\mbox{ in }\;  \R^N,
\end{equation}
where, as in Lemma \ref{funcw}, we denote $W_\gamma(x)=e^{-\gamma\sqrt{1+|x|^2}}$ for $\gamma>0$.
 
Let $b=am/(s+1)$. Then, since $\sigma\leq 1$, one has
\begin{equation}\label{siggg}
ap-bq\geq a.
\end{equation}
Take now $\lambda, \mu>0$ large such that
\begin{equation}\label{eqlm}
\lambda>\max\{2a^2, N^2\}\;, \quad \mu>\max\left\{2b^2, \Big(\frac{am}{s+1}\Big)^2, N^2\right\}.
\end{equation} 
For $\overline M_1>\underline M_1>0$ and $\overline M_2>\underline M_2>0$ define the set 
\begin{equation}\label{eqss}
\Sigma=\left\{u,v\in C(\R^N): 
\begin{aligned}
\underline M_1 W_a& \leq u\leq  \overline M_1 W_a\\
\underline M_2 W_b & \leq v\leq  \overline M_2 W_b
\end{aligned} \quad\mbox{ in }\R^N \right\}.
\end{equation}
For $(u,v)\in \Sigma$, let $Tu, Tv\in C^2(\R^N)$ be the unique solutions of 
\begin{equation}\label{eqw}
\begin{cases}
\displaystyle   -\Delta Tu+\lambda Tu=\frac{u^p}{v^q}+\rho(x) \quad\mbox{ in }\R^N\, , \\[0.1in]
\displaystyle   -\Delta Tv+\mu Tv=\frac{u^m}{(Tv)^s} \quad\mbox{ in }\R^N,\\[0.1in]
Tu, Tv\to 0\mbox{ as }|x|\to \infty.
\end{cases}
\end{equation}
Let us first note that for $(u,v)\in \Sigma$ and \eqref{siggg} we have 
\begin{equation}\label{wa1}
\begin{aligned}
\frac{u^p}{v^q}+\rho(x) & \leq \overline M_1^p \underline M_2^{-q} W_a^p W_b^{-q}+\beta W_a\\
& =\overline M_1^p \underline M_2^{-q} W_{ap-bq} +\beta W_a\\
& \leq \Big( \overline M_1^p \underline M_2^{-q} +\beta\Big) W_a\quad\mbox{ in }\R^N.
\end{aligned}
\end{equation}
This shows that $f=\frac{u^p}{v^q}+\rho(x)$ is bounded and thus \eqref{finiteint2} holds. By Proposition \ref{prep} we have 
$$
Tu(x)=\int_{\R^N} \mathcal{G}_\lambda(x-y)\Big[\frac{u^p(y)}{v^q(y)}+\rho(y)  \Big] dy\quad\mbox{ for all }x\in \R^N,
$$
and this justifies the existence and uniqueness of the solution $Tu$ in \eqref{eqw}.  
Also, from Lemma \ref{eax} with $\psi(x)=u^m(x)\simeq e^{-\gamma |x|}$, $\gamma=am$ and $\mu>(\frac{am}{s+1})^2$ (see \eqref{eqlm}) we deduce the existence and the uniqueness of $Tv\in C^2(\R^N)$. 

Define $\Psi:\Sigma\to C(\R^N)\times C(\R^N)$ by $\Psi(u,v)=(Tu, Tv)$ and observe that all fixed points of $\Psi$ are solutions of \eqref{GM0}. We shall establish the existence of such fixed points (and thus, the existence of a solution to \eqref{GM0}) through Schauder fixed point theorem.

\begin{lemma}\label{MM}
If $\overline M_1>\underline M_1>0$ and $\overline M_2>\underline M_2>0$ satisfy 
\begin{subequations}
\begin{align}
\frac{\lambda}{2} \overline M_1&\geq \overline M_1^p \underline M_2^{-q}+\beta \, , \label{eqm1} \\[0.07in]
2\lambda \underline M_1 & =\alpha \, , \label{eqm2}\\
\frac{\mu}{2} \overline M_2&=\overline M_1^m \overline M_2^{-s} \, , \label{eqm3}\\ 
2\mu \underline M_2 & = \underline M_1^m \underline M_2^{-s}\, , \label{eqm4}
\end{align}
\end{subequations}
then $\Psi(\Sigma)\subset \Sigma$. 
\end{lemma}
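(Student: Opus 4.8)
The plan is to verify the four one-sided bounds defining $\Sigma$, namely $\underline M_1 W_a\le Tu\le\overline M_1 W_a$ and $\underline M_2 W_b\le Tv\le\overline M_2 W_b$, one at a time; in each case I would exhibit the relevant constant multiple of $W_a$ or $W_b$ as a sub- or super-solution of the corresponding equation in \eqref{eqw} and then invoke the comparison principle for $-\Delta+\lambda I$ (resp.\ $-\Delta+\mu I$) on $\R^N$. The two ingredients that make this work are Lemma \ref{funcw}, which bounds $-\Delta W_a+\lambda W_a$ below by $(\lambda-a^2)W_a$ and above by $(\lambda+Na)W_a$ (and the analogue for $W_b$ with $\mu$), and the elementary identity $W_a^m W_b^{-s}=W_{am-bs}=W_b$, valid precisely because $b=am/(s+1)$; beyond these one only needs the already-established bound \eqref{wa1} and the lower bound $\rho\ge\alpha W_a$ from \eqref{roro1}. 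As a preliminary I would record the arithmetic consequences of \eqref{eqlm} used repeatedly: $\lambda>\max\{2a^2,N^2\}$ gives $\lambda-a^2>\tfrac{\lambda}{2}$ and, since $Na\le\tfrac12(N^2+a^2)<\lambda$, also $\lambda+Na<2\lambda$; similarly $\mu>\max\{2b^2,N^2\}$ gives $\mu-b^2>\tfrac{\mu}{2}$ and $\mu+Nb<2\mu$.

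For $Tu$ the equation is linear in $Tu$, so the comparison step is the textbook one. Using Lemma \ref{funcw}, $\lambda>2a^2$, \eqref{eqm1} and \eqref{wa1} one checks $-\Delta(\overline M_1 W_a)+\lambda\overline M_1 W_a\ge\tfrac{\lambda}{2}\overline M_1 W_a\ge(\overline M_1^p\underline M_2^{-q}+\beta)W_a\ge u^p/v^q+\rho$, so that $w:=Tu-\overline M_1 W_a$ satisfies $-\Delta w+\lambda w\le0$ on $\R^N$; since $w\to0$ at infinity, if $\sup_{\R^N}w>0$ it would be attained at some $x_0$, where $-\Delta w(x_0)+\lambda w(x_0)>0$ --- a contradiction --- hence $Tu\le\overline M_1 W_a$. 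The lower bound is symmetric: Lemma \ref{funcw}, $\lambda+Na<2\lambda$, \eqref{eqm2} and \eqref{roro1} give $-\Delta(\underline M_1 W_a)+\lambda\underline M_1 W_a\le(\lambda+Na)\underline M_1 W_a<2\lambda\underline M_1 W_a=\alpha W_a\le\rho\le u^p/v^q+\rho$, and the same maximum-principle argument applied to $\underline M_1 W_a-Tu$ yields $Tu\ge\underline M_1 W_a$.

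For $Tv$ the right-hand side $u^m t^{-s}$ depends on the unknown, but it is \emph{decreasing} in $t>0$, and this monotonicity is exactly what closes the maximum-principle argument: if $w:=Tv-\overline M_2 W_b$ had a positive maximum at $x_0$ then, subtracting the super-solution inequality from the equation, $0<-\Delta w(x_0)+\mu w(x_0)\le u^m(x_0)\big(Tv(x_0)^{-s}-(\overline M_2 W_b(x_0))^{-s}\big)<0$ because $Tv(x_0)>\overline M_2 W_b(x_0)>0$ and $s>0$; together with $w\to0$ at infinity this forces $Tv\le\overline M_2 W_b$, and the lower bound $Tv\ge\underline M_2 W_b$ is obtained symmetrically from $\underline M_2 W_b-Tv$. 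That $\overline M_2 W_b$ and $\underline M_2 W_b$ are indeed a super- and a sub-solution follows from Lemma \ref{funcw}, the bounds $\mu-b^2>\tfrac{\mu}{2}$ and $\mu+Nb<2\mu$, the identity $W_a^m W_b^{-s}=W_b$, the inclusion $\underline M_1 W_a\le u\le\overline M_1 W_a$, and \eqref{eqm3}--\eqref{eqm4}; e.g.\ $-\Delta(\overline M_2 W_b)+\mu\overline M_2 W_b\ge\tfrac{\mu}{2}\overline M_2 W_b=\overline M_1^m\overline M_2^{-s}W_b\ge u^m(\overline M_2 W_b)^{-s}$. Once all four bounds are in hand, $(Tu,Tv)\in\Sigma$ for every $(u,v)\in\Sigma$, i.e.\ $\Psi(\Sigma)\subset\Sigma$.

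The step I expect to require the most care is the comparison on the \emph{unbounded} domain $\R^N$: a maximum principle is not automatic there, so one must genuinely use that $Tu$, $Tv$ and the barriers $W_a$, $W_b$ all vanish at infinity in order to know that the supremum of the relevant difference, if positive, is attained at an interior point. (An alternative that avoids re-running the construction for $Tv$ is to compare $Tv$ directly with the explicit ordered pair $cW_b\le\,\cdot\,\le CW_b$ produced inside the proof of Lemma \ref{eax} with $\psi=u^m$; conditions \eqref{eqm3}--\eqref{eqm4} together with $\mu>\max\{2b^2,N^2\}$ are exactly what make $\underline M_2\le c$ and $\overline M_2\ge C$.) The rest is routine bookkeeping of the identities \eqref{eqm1}--\eqref{eqm4} against the size conditions \eqref{eqlm}.
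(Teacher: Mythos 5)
Your proof is correct and follows essentially the same route as the paper: verify, via Lemma \ref{funcw} and the size conditions \eqref{eqlm}, that the four constant multiples of $W_a$ and $W_b$ are sub/super-solutions (relying on the identity $W_a^m W_b^{-s}=W_b$ and on \eqref{wa1}, \eqref{roro1}), then compare on $\R^N$ using decay at infinity and, for $Tv$, the monotonicity of $t\mapsto t^{-s}$ exactly as in the uniqueness argument of Lemma \ref{eax}. The only cosmetic difference is that the paper inserts an intermediate barrier $\overline\zeta=\tfrac{2}{\lambda}\bigl(\overline M_1^p\underline M_2^{-q}+\beta\bigr)W_a$ and then uses \eqref{eqm1} to pass to $\overline M_1 W_a$, whereas you verify the super-solution property of $\overline M_1 W_a$ directly.
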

\begin{proof} From the estimate \eqref{wa1} we have
$$
-\Delta Tu+\lambda Tu\leq \Big( \overline M_1^p \underline M_2^{-q} +\beta\Big) W_a \quad\mbox{ in }\R^N.
$$
Thus, by Lemma \ref{funcw} and \eqref{eqlm}, the function 
$\displaystyle 
\overline \zeta=\frac{2}{\lambda} \Big( \overline M_1^p \underline M_2^{-q} +\beta\Big) W_a
$
satisfies
$$
\begin{cases}
-\Delta \overline \zeta+\lambda \overline \zeta \geq (\lambda-a^2) \overline \zeta \geq \frac{\lambda}{2}\overline \zeta=\Big( \overline M_1^p \underline M_2^{-q} +\beta\Big) W_a \geq -\Delta Tu+\lambda Tu  \quad\mbox{ in }\R^N\, ,\\[0.05in]
\;\;\; Tu(x)\, , \overline \zeta(x)\to 0\quad\mbox{ as }|x|\to \infty.
\end{cases}
$$
By the maximum principle and \eqref{eqm1} we find $Tu\leq  \overline\zeta \leq \overline M_1 W_a$ in $\R^N$. Also,
$$
-\Delta Tu+\lambda Tu\geq \rho(x) \geq \alpha W_a  \quad\mbox{ in }\R^N.
$$
Thus, by Lemma \ref{funcw} and the above estimate the function $\displaystyle \underline \zeta=\frac{\alpha}{2\lambda} W_a$ satisfies 
$$
\begin{cases}
\displaystyle -\Delta  \underline \zeta +\lambda  \underline \zeta \leq (\lambda+Na)\underline \zeta\leq  2\lambda \underline \zeta =  \alpha W_a\leq -\Delta Tu+\lambda Tu \quad\mbox{ in }\R^N\, ,\\[0.05in]
\;\;\; Tu(x)\, ,  \underline \zeta (x)\to 0\quad\mbox{ as }|x|\to \infty.
\end{cases}
$$
By the maximum principle and \eqref{eqm2} one has $Tu\geq \underline \zeta=\underline M_1 W_a$ in $\R^N$. 

We now focus on the inequality $\underline M_2 W_b\leq Tv\leq \overline M_2 W_b$ in $\R^N$ which we will achieve using \eqref{eqm3}-\eqref{eqm4}. 
From  $\underline M_1 W_a\leq u \leq \overline M_1 W_a$ in $\R^N$, one has that $Tv$ satisfies 
\begin{equation}\label{AA}
\underline M_1^m W_a^m (Tv)^{-s} \leq -\Delta Tv+\mu Tv\leq \overline M_1^m W_a^m (Tv)^{-s}\quad\mbox{ in }\R^N.
\end{equation}
We first note that $\overline \xi=\overline M_2 W_b$ satisfies by \eqref{eqm3}, \eqref{eqlm} and the estimate \eqref{WA},  the inequality
$$
-\Delta \overline \xi +\mu \overline \xi \geq (\mu-b^2)\overline \xi\geq \frac{\mu}{2} \overline M_2 W_b= \overline M_1^m W_a^m \overline \xi^{-s}\quad \mbox{ in }\R^N.
$$
We have thus obtained
$$
\begin{cases}
-\Delta Tv+\mu Tv\leq \overline M_1^m W_a^m (Tv)^{-s}   \quad\mbox{ in }\R^N\, ,\\[0.07in]
-\Delta \overline \xi +\mu \overline \xi \geq  \overline M_1^m W_a^m \overline \xi^{-s}\quad \mbox{ in }\R^N\, , \\[0.05in]
\;\;\; T v(x)\, ,  \overline \xi (x)\to 0\quad\mbox{ as }|x|\to \infty.
\end{cases}
$$
With the same method as in the proof of the uniqueness of the solution in Lemma \ref{eax} we deduce $Tv \leq \overline \xi=\overline M_2 W_b$ in $\R^N$.  Finally, from \eqref{eqm4}, \eqref{eqlm} and Lemma \ref{funcw} we have that $\underline \xi=\underline M_2 W_b$ satisfies 
\begin{equation}\label{BB}
-\Delta \underline \xi +\mu \underline \xi \leq (\mu+Nb)\underline \xi\leq 2\mu \underline M_2 W_b= \underline M_1^m W_a^m \underline \xi^{-s}\quad \mbox{ in }\R^N.
\end{equation}
As above, from \eqref{AA} and \eqref{BB} we deduce $Tv\geq \underline \xi=\underline M_2 W_b$ in $\R^N$ and this concludes our proof.
\end{proof}

\begin{lemma}\label{const}
If $\lambda, \mu>0$ satisfy \eqref{lamm} for some $C_1, C_2>0$, then there exist  $\overline M_1>\underline M_1>0$ and $\overline M_2>\underline M_2>0$ that satisfy \eqref{eqm1}-\eqref{eqm4} and thus $\Sigma$ is an invariant set of $\Psi$.
\end{lemma}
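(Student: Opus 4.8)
The plan is to treat \eqref{eqm2}, \eqref{eqm3} and \eqref{eqm4} as \emph{definitions} of $\underline M_1$, $\underline M_2$ and $\overline M_2$ once $\overline M_1$ is prescribed, and then to pick $\overline M_1$ so that the single remaining inequality \eqref{eqm1} together with the two orderings $\overline M_1>\underline M_1$ and $\overline M_2>\underline M_2$ all hold. Explicitly, \eqref{eqm2} gives $\underline M_1=\alpha/(2\lambda)$, \eqref{eqm4} gives $\underline M_2=\bigl(\underline M_1^m/(2\mu)\bigr)^{1/(s+1)}$, and \eqref{eqm3} gives $\overline M_2=\bigl(2\overline M_1^m/\mu\bigr)^{1/(s+1)}$. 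In particular
$$
\underline M_2^{-q}=2^{q/(s+1)}\Bigl(\tfrac{2}{\alpha}\Bigr)^{mq/(s+1)}\mu^{q/(s+1)}\lambda^{mq/(s+1)},
$$
so, writing $c_0=2^{q/(s+1)}(2/\alpha)^{mq/(s+1)}$, the inequality \eqref{eqm1} becomes
$$
\frac{\lambda}{2}\,\overline M_1\ \ge\ c_0\,\mu^{q/(s+1)}\lambda^{mq/(s+1)}\,\overline M_1^{\,p}\ +\ \beta .
$$

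Next I would take $\overline M_1=4\beta/\lambda$, so that the left-hand side equals $2\beta$ and the displayed inequality reduces to $c_0(4\beta)^p\,\mu^{q/(s+1)}\lambda^{\,mq/(s+1)-p}\le\beta$. This is exactly where the hypothesis \eqref{lamm} enters: raising $\mu\le C_2\lambda^{p(s+1)/q-m}$ to the power $q/(s+1)$ gives $\mu^{q/(s+1)}\lambda^{\,mq/(s+1)-p}\le C_2^{q/(s+1)}$, a bound \emph{independent of} $\lambda$. It therefore suffices to fix $C_2>0$ small enough, depending only on $p,q,m,s,\alpha,\beta$, that $c_0(4\beta)^pC_2^{q/(s+1)}\le\beta$, i.e.\ $C_2\le\bigl(\beta^{1-p}/(c_04^p)\bigr)^{(s+1)/q}$. (Here $mq/(s+1)-p=(p-1)(\sigma-1)-1\le-1<0$ because $p>1\ge\sigma$, which is also why the interval in \eqref{lamm} is non-degenerate.) With this choice $\overline M_1=4\beta/\lambda>\alpha/(2\lambda)=\underline M_1$ since $\beta>\alpha$, and $\overline M_2/\underline M_2=\bigl(4(\overline M_1/\underline M_1)^m\bigr)^{1/(s+1)}>1$; hence \eqref{eqm1}--\eqref{eqm4} hold simultaneously with both required orderings, so $\Sigma$ in \eqref{eqss} is well defined and non-empty.

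Finally, the passage from \eqref{eqm1}--\eqref{eqm4} to $\Psi(\Sigma)\subset\Sigma$ is exactly Lemma \ref{MM}; but the proof of that lemma also uses the size conditions \eqref{eqlm} on $\lambda,\mu$, so these must be guaranteed as well. For this I would note that \eqref{lamm} can only be satisfied when $C_2\lambda^{p(s+1)/q-m}\ge C_1$, i.e.\ $\lambda\ge(C_1/C_2)^{q/(p(s+1)-mq)}$, while $\mu\ge C_1$; having already fixed $C_2$, one then enlarges $C_1$ so that every pair $(\lambda,\mu)$ admissible in \eqref{lamm} automatically meets \eqref{eqlm}. The only delicate point is the middle step: correctly carrying the powers of $\lambda$ and $\mu$ inside $\underline M_2^{-q}$ and recognising that \eqref{lamm} is precisely the condition that keeps the superlinear term $\overline M_1^p\underline M_2^{-q}$ in \eqref{eqm1} under control uniformly in $\lambda$; everything else is elementary.
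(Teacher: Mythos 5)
Your proof is correct and reaches the same conclusion, but via a slightly different parametrization of the free parameter $\overline M_1$. The paper strengthens \eqref{eqm1} into the pair \eqref{eqm1a}, solves the \emph{equality} $\frac{\lambda}{4}\overline M_1=\overline M_1^p\underline M_2^{-q}$ for $\overline M_1$ (obtaining the fairly complicated expression \eqref{eqm2c}), and then translates the residual inequality $\frac{\lambda}{4}\overline M_1\ge\beta$ into the upper bound in \eqref{lamm}. You instead prescribe $\overline M_1=4\beta/\lambda$ outright, so that $\frac{\lambda}{2}\overline M_1=2\beta$ by construction, and then verify that the full inequality \eqref{eqm1} collapses to $c_0(4\beta)^p\,\mu^{q/(s+1)}\lambda^{mq/(s+1)-p}\le\beta$, which \eqref{lamm} guarantees once $C_2$ is taken small enough. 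This is a cleaner choice: your $\overline M_1$ has a transparent dependence on $\lambda$ and $\beta$, the orderings $\overline M_1>\underline M_1$ and $\overline M_2>\underline M_2$ fall out immediately (the former reduces to $8\beta>\alpha$, the latter to $4(\overline M_1/\underline M_1)^m>1$), and the only real work is seeing that $\mu^{q/(s+1)}\lambda^{mq/(s+1)-p}$ is bounded uniformly in $\lambda$ under \eqref{lamm} — exactly the point the paper buries in the final display of its proof. You also explicitly note that Lemma \ref{MM} uses the size hypotheses \eqref{eqlm}, and that enlarging $C_1$ (which simultaneously forces $\lambda$ large since $\mu\le C_2\lambda^{p(s+1)/q-m}$ and the exponent is positive) makes \eqref{eqlm} automatic; the paper leaves this point implicit, having fixed $\lambda,\mu$ satisfying \eqref{eqlm} before stating Lemma \ref{const}, so your remark fills a small gap in bookkeeping rather than correcting an error. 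Both arguments produce the same class of constants $C_1,C_2$ up to harmless changes.
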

\begin{proof}
We shall prove that if \eqref{lamm} holds for some suitable constants $C_1, C_2>0$, then there are $\overline M_1>\underline M_1>0$ and $\overline M_2>\underline M_2>0$ that satisfy slightly more general conditions than \eqref{eqm1}-\eqref{eqm4}, namely
\begin{subequations}
\begin{align}
\frac{\lambda}{4} \overline M_1&= \overline M_1^p \underline M_2^{-q} \quad\mbox{ and }\quad \frac{\lambda}{4} \overline M_1\geq \beta \, , \label{eqm1a} \\[0.07in]
2\lambda \underline M_1 & =\alpha \, , \label{eqm2a}\\
\frac{\mu}{2} \overline M_2&=\overline M_1^m \overline M_2^{-s} \, , \label{eqm3a}\\ 
2\mu \underline M_2 & = \underline M_1^m \underline M_2^{-s}. \label{eqm4a}
\end{align}
\end{subequations}
Adding the two conditions in \eqref{eqm1a} we deduce \eqref{eqm1}. Let us note that from \eqref{eqm2a} we have $\underline M_1=\frac{\alpha}{2}\lambda^{-1}$. Using this in \eqref{eqm4a} we deduce 
\begin{equation}\label{eqm2b}
\underline M_2 =\left(\frac{\alpha^m}{2^{m+1}}  \right)^{\frac{1}{s+1}} \mu^{-\frac{1}{s+1}}\lambda^{-\frac{m}{s+1}}. 
\end{equation}
Further, from \eqref{eqm1a}$_1$ we find
\begin{equation}\label{eqm2c}
\overline M_1 =\left[ \frac{1}{4}\left(\frac{\alpha^m}{2^{m+1}}  \right)^{\frac{q}{s+1}} \right]^{\frac{1}{p-1}} \mu^{-\frac{\sigma}{m}} \, \lambda^{\frac{1}{p-1}-\sigma},
\end{equation}
and then from \eqref{eqm3a} one obtains
\begin{equation}\label{eqm2d}
\overline M_2 =\left\{ 2\left[ \frac{1}{4}\left(\frac{\alpha^m}{2^{m+1}}  \right)^{\frac{q}{s+1}} \right]^{\frac{m}{p-1}} \right\}^{\frac{1}{s+1}}  \mu^{-\frac{1}{s+1}-\frac{\sigma}{s+1}} \, \lambda^{\frac{m}{(p-1)(s+1)}-\frac{\sigma m}{s+1} }.
\end{equation}
Let us note that by \eqref{eqm2c}, condition \eqref{eqm1a}$_2$ holds if 
$$
\frac{1}{4\beta} \left[ \frac{1}{4}\left(\frac{\alpha^m}{2^{m+1}}  \right)^{\frac{q}{s+1}} \right]^{\frac{1}{p-1}} \mu^{-\frac{\sigma}{m}} \, \lambda^{\frac{p}{p-1}-\sigma}\geq 1 \; \Longrightarrow \mu\leq c_0\lambda^{\frac{p(s+1)}{q}-m},
$$
where $c_0>0$ depends on $p,q,m,s$ and $\alpha, \beta$. The same condition but with different constants instead of $c_0$ arises if we impose $\overline M_1>\underline M_1$ and then $\overline M_2>\underline M_2$. This finishes the proof of our lemma.
\end{proof}

\medskip

\noindent{\bf Proof of Theorem \ref{th1} completed.} Assume that \eqref{eqm1a}-\eqref{eqm4a} hold. By the above two lemmas, this implies that $\Sigma$ is invariant to $\Psi$. Due to lack of compactness, we cannot apply the Schauder fixed point theorem directly on $C(\R^N)\times C(\R^N)$. In turn, we shall use this result on $C(\overline B_n)\times C(\overline B_n)$ and combine it with elliptic regularity to deduce the existence of a positive solution to \eqref{GM0} in the whole space. 

Let 
$$
\Sigma_n=\left\{u,v\in C(\overline B_n): 
\begin{aligned}
\underline M_1 W_a& \leq u\leq  \overline M_1 W_a\\
\underline M_2 W_b & \leq v\leq  \overline M_2 W_b
\end{aligned} \quad\mbox{ in } \overline B_n \right\},
$$
where $\overline M_1>\underline M_1>0$ and $\overline M_2>\underline M_2>0$ are the constants that satisfy \eqref{eqm1a}-\eqref{eqm4a}.
\smallskip

For $(u,v)\in \Sigma_n$, let $(\widetilde u, \widetilde v)\in \Sigma$ be a fixed extension of $(u, v)$, where $\Sigma$ is defined in \eqref{eqss}. Let $(U, V)=(T\widetilde u, T\widetilde v)$ where $T\widetilde u$ and $T\widetilde v$ are given by \eqref{eqw} (with $u, v$ replaced by $\widetilde u$, $\widetilde v$).  We define
$$
\Psi_n:\Sigma_n\to C(\overline B_n)\times C(\overline B_n)\quad\mbox{ by }\quad  
\Psi_n(u, v):=\Big(U\!\mid_{B_n}, V\!\mid_{B_n}\Big).
$$
Note that by \eqref{eqw} we have that $(U, V)$ satisfies 
$$
\begin{cases}
\displaystyle   -\Delta U+\lambda U=\frac{u^p}{v^q}+\rho(x) \quad\mbox{ in }B_n\, , \\[0.1in]
\displaystyle   -\Delta V+\mu V=\frac{u^m}{ V^s} \quad\mbox{ in }B_n.\\[0.1in]
\end{cases}
$$
In particular, one has that the above definition of $\Psi_n$  is independent on the extension $(\widetilde u, \widetilde v)\in \Sigma$ of $(u, v)\in \Sigma_n$. Also, since $\Sigma$ is invariant to $\Psi$ it follows that $\Sigma_n$ is invariant to $\Psi_n$. 
Finally, $\Psi_n$ maps $\Sigma_n$ into $C^2(\overline B_n)\times C^2(\overline B_n)$ which is compactly embedded into $C(\overline B_n)\times C(\overline B_n)$; thus, $\Psi_n:\Sigma_n\to \Sigma_n$  has compact image.

We can thus apply  the Schauder fixed point theorem for $\Psi_n$ and there exists $(u_n, v_n)\in \Sigma_n$ a fixed point of $\Psi_n$, which means $(u_n, v_n)$ is a solution of  the system 
\begin{equation}\label{GM00}
\begin{cases}
\displaystyle   -\Delta u_n+\lambda u_n=\frac{u_n^p}{v_n^q}+\rho(x) \\[0.15in]
\displaystyle   -\Delta v_n+\mu v_n=\frac{u_n^m}{v_n^s}  \\[0.15in]
\underline M_1 W_a \leq u_n\leq  \overline M_1 W_a \\[0.05in]  
\underline M_2 W_b  \leq v_n\leq  \overline M_2 W_b 
\end{cases}
\quad\mbox{ in } B_n.
\end{equation}
Since $b=am/(s+1)$ and $\sigma\leq 1$ we have
\begin{equation}\label{regl}
\frac{u_n^p}{v_n^q}\leq C W_{ap-bq}\leq C W_a\, , \quad \frac{u_n^m}{v_n^s} \leq CW_{am-bs}=CW_b \quad \mbox{ in } B_n.
\end{equation}
By standard elliptic estimates, it follows that $\{(u_n, v_n)\}_{n\geq 1}$ is bounded in $W^{2,r}(B_1)\times W^{2,r}(B_1)$ for all $r>1$, and thus, it is bounded in $C^{1,\gamma}(B_1)\times C^{1,\gamma}(B_1)$ for some $\gamma\in (0,1)$. Since the embedding $C^{1,\gamma}(B_1) \hookrightarrow C^{1}(B_1)$ is compact, it follows that $\{(u_n, v_n)\}_{n\geq 1}$ has a convergent subsequence $\{(u^1_n, v^1_n)\}_{n\geq 1}$ in $C^{1}(B_1)\times C^1(B_1)$. Now, we proceed similarly with the sequence $\{(u^1_n, v^1_n)\}_{n\geq 2}$ which, by elliptic estimates it is bounded in $W^{2,r}(B_2)\times W^{2,r}(B_2)$ for all $r>1$ and thus, has a convergent subsequence $\{(u^2_n, v^2_n)\}_{n\geq 2}$ in $C^{1}(B_2)\times C^{1}(B_2)$. 

Inductively, we obtain a sequence $\{(u^m_n, v^m_n)\}_{n\geq m}\subset \{(u^{m-1}_n, v^{m-1}_n)\}_{n\geq m}$ which is convergent in $C^{1}(B_m)\times C^{1}(B_m)$. Now, the diagonal sequence $\{(u^n_n, v^n_n)\}_{n\geq 1}$ will be convergent in $C^{1}_{loc}(\R^N)\times C^{1}_{loc}(\R^N)$. Its limit $(u,v)$ is a $C^1$-solution of \eqref{GM0} and from \eqref{regl} it satisfies
$$
\frac{u^p}{v^q}\leq C  W_a\, , \quad \frac{u^m}{v^s} \leq CW_b \quad \mbox{ in } \R^N.
$$
By regularity theory one has $(u, v)\in C^{2}(\R^N)\times C^{2}(\R^N)$. This concludes our proof.
\qed

\section{Proof of Theorem \ref{thgs}}
Assume $(u,v)$ is a positive solution of \eqref{GM}. 

\medskip

\noindent (i) Since $v$ is positive and $v(x)\to 0$ as $|x|\to \infty$,  it follows that $v^{-q}\geq C>0$ in $\R^N$ for some constant $C>0$. Thus, $u$ satisfies $-\Delta u\geq Cu^p$ in $\R^N$. By Lemma \ref{arms}  it follows that \eqref{GM} has no solutions for all $0<p\leq N/(N-2)$. 

Assume now $0<m\leq 2/(N-2)$. Since $u$ is superharmonic in $\R^N$, we have 
$u(x)\geq c|x|^{2-N}$ in $\R^N\setminus B_1$, for some constant $c>0$. Thus, $v$ is a solution of  \eqref{singpz} where $\phi=u^m$ satisfies \eqref{eqga} with $\gamma=m(N-2)\leq 2$. By Lemma \ref{zz}(i) it follows that \eqref{GM} has no positive solutions.

\medskip

\noindent (ii) From Lemma \ref{Glem0} we deduce 
\begin{equation}\label{ur1}
u(x)\geq c(N)\int_{\R^N}|x-y|^{2-N}\rho(y) dy\quad\mbox{ for all }x\in \R^N,
\end{equation}
where $C(N)>0$ is a constant. 

\noindent In particular, for $x=0$ we deduce $u(0)\geq c(N)\int_{\R^N}|y|^{2-N}\rho(y) dy=\infty$, contradiction.

\medskip

\noindent (iii) By \eqref{eqG02} in Lemma \ref{Glem0} and \eqref{ur1} we find 
$$
\begin{aligned}
v(x) &= c(N)\int_{\R^N} |x-y|^{2-N} \frac{u^m(y)}{ v^{s} (y)} dy\\
&\geq C\int_{\R^N} |x-y|^{2-N} u^m (y) dy\\
&\geq C\int_{\R^N} |x-y|^{2-N} \Big(\int_{\R^N} |y-z|^{2-N} \rho(z) dz \Big)^m dy.
\end{aligned}
$$
In particular, for $x=0$ we find
$$
\infty>v(0)\geq C \int_{\R^N} |y|^{2-N} \Big(\int_{\R^N} |y-z|^{2-N} \rho(z) dz \Big)^m dy=\infty,
$$
contradiction.

(iv) Observe that $u\in C^2(\R^N)$ is positive and satisfies $-\Delta u=h(x)  u^p$ in $\R^N$, where $h=v^{-q}$. In light of the second equation of \eqref{GM} one has 
$$
\Delta h=q(q+1)v^{-q-2}|\nabla v|^2+qv^{-q-1}(-\Delta v)\geq 0\quad\mbox{ in }\R^N.
$$
Also, since $v(x)\to 0$ as $|x|\to \infty$, one has $h\geq c>0$ in $\R^N$. Further, thanks to the assumption (iv) in Theorem \ref{thgs} one has $|\nabla h(x)|\leq \frac{C}{|x|}h(x)$ for $|x|>1$ large. 
By Lemma \ref{gspruck} it now follows that $u\equiv 0$, contradiction.

\medskip

\noindent{\bf Proof of Corollary \ref{cor1}.}  (i) Assume $\rho\equiv 0$, $\frac{N}{N-2}<p<\frac{N+2}{N-2}$ and that the system \eqref{GM0} has a positive solution $(u,v)$ with $u(x)\simeq |x|^{-a}$ and $0<a<\frac{(N-2)s+N}{m}$. Then, $v$ satisfies \eqref{singpz} with $\phi(x)=u^m(x)\simeq |x|^{-\gamma}$, $\gamma=am$ and $ 0<\gamma<(N-2)s+N$. By Lemma \ref{zz}(i) it follows that \eqref{GM} has no positive solutions if $am\leq 2$. Thus, $am>2$ and by Lemma \ref{zz}(ii) one has $v(x)\simeq |x|^{-b}$, where $b=\frac{am-2}{s+1}>0$. It follows that
$$
g(x):=\frac{u^m(y)}{v^s(y)}\simeq |x|^{-b-2}.
$$
Thus, there exist $c_2>c_1>0$ such that 
\begin{equation}\label{limg}
c_1|x|^{-b-2}\leq g(x) \leq c_2|x|^{-b-2} \quad\mbox{ for all }x\in \R^N\setminus B_1.
\end{equation}

We shall now check that $v$ satisfies \eqref{convr} in $\R^N\setminus B_2$ which further implies that \eqref{GM} has no positive solutions. 

By \eqref{eqG02} in Lemma \ref{Glem0} we deduce
\begin{equation}\label{limg1}
v(x) = c(N)\int_{\R^N} g(y)|x-y|^{2-N} dy \quad\mbox{ for all } x\in \R^N
\end{equation}
and then
\begin{equation}\label{limg2}
\nabla v(x) = -c(N)(N-2)\int_{\R^N} g(y) |x-y|^{-N}(x-y)  dy \quad\mbox{ for all } x\in \R^N.
\end{equation}
Let $x\in \R^N\setminus B_2$. We have 
\begin{equation}\label{limg3}
\begin{aligned}
|\nabla v(x)| &\leq C \int_{\R^N} g(y) |x-y|^{1-N} dy\\
&=C \left\{\int_{|y|<|x|/2}+\int_{|x|/2<|y|<2|x|}+\int_{|y|>2|x|}  \right\} g(y) |x-y|^{1-N} dy.
\end{aligned}
\end{equation}
To estimate the first integral in \eqref{limg3} we have for $|y|<|x|/2$ that $|x-y|\geq |x|-|y|>|x|/2$. Thus, by \eqref{limg1} we find
\begin{equation}\label{limg4}
\begin{aligned}
\int_{|y|<|x|/2}g(y) |x-y|^{1-N} dy & =\int_{|y|<|x|/2}g(y) |x-y|^{2-N} |x-y|^{-1}dy\\[0.1in]
&\leq \frac{2}{|x|} \int_{|y|<|x|/2}g(y) |x-y|^{2-N} dy\\
&\leq C \frac{v(x)}{|x|}.
\end{aligned}
\end{equation}
To estimate the second integral in \eqref{limg3} we have for $|x|/2<|y|<2|x|$ that $|x-y|\leq |x|+|y|<3|x|$. Hence, from \eqref{limg} and $v(x)\simeq |x|^{-b}$ we deduce

\begin{equation}\label{limg5}
\begin{aligned}
\int_{|x|/2<|y|<2|x|}g(y) |x-y|^{1-N} dy&\leq C \int_{|x|/2<|y|<2|x|} |y|^{-b-2} |x-y|^{1-N} dy\\
& \leq C|x|^{-b-2} \int_{|x-y|<3|x|} |x-y|^{1-N} dy\\
&\leq C|x|^{-b-1}\\
&\leq C \frac{v(x)}{|x|}.
\end{aligned}
\end{equation}
Finally, if $|y|>2|x|$ then $|x-y|\geq |y|-|x|\geq |y|/2$ we use \eqref{limg} to find 
\begin{equation}\label{limg6}
\begin{aligned}
\int_{|y|>2|x|}g(y) |x-y|^{1-N} dy&\leq C \int_{|y|>2|x|} |y|^{-b-2} |x-y|^{1-N} dy\\
&\leq C \int_{|y|>2|x|} |y|^{-b-1-N} dy\\
&= C|x|^{-b-1}\\
&\leq C \frac{v(x)}{|x|}.
\end{aligned}
\end{equation}
From \eqref{limg3}-\eqref{limg6} we deduce that $v$ satisfies \eqref{convr} and so the are no positive solutions of the system \eqref{GM}.
\medskip

(ii) Assume \eqref{GM} has a radial solution $(u,v)$. Then, letting $r=|x|$ we have that $v(r)$ satisfies
\begin{equation}\label{eqv}
\begin{cases}
-\big(r^{N-1}v'(r)\big)'=r^{N-1}g(r)\quad \mbox{ for all }r\geq 0,\\
\;\;\;  v(r)\to 0\quad\mbox{ as }\; r\to \infty,
\end{cases}
\end{equation}
where $g(r)=u^m(r)v^{-s}(r)>0$. From the above we find that $r\mapsto  r^{N-1}v'(r)$ is decreasing on $[0, \infty)$ and $v'(r)\leq 0$ for all $r\geq 0$. Integrating in \eqref{eqv} over $[0,r]$, for all $r>0$  one has
\begin{equation}\label{va1}
|v'(r)|=-v'(r)=r^{1-N}\int_0^r s^{N-1}g(s)ds.
\end{equation}
A new integration over $[r, \infty]$ yields
\begin{equation}\label{va2}
v(r)=\int_r^\infty t^{1-N}\Big(\int_0^t  s^{N-1}g(s)ds\Big) dt\quad\mbox{ for all }r>0.
\end{equation}
Then, from \eqref{va2} we find
$$
\begin{aligned}
v(r) & \geq \int_r^\infty t^{1-N}\Big(\int_0^r  s^{N-1}g(s)ds\Big) dt=\Big(\int_0^r  s^{N-1}g(s)ds\Big)\cdot \Big( \int_r^\infty t^{1-N} dt\Big)\\
& =\frac{r^{2-N}}{N-2}\int_0^r  s^{N-1}g(s)ds\quad\mbox{ for all }r>0.
\end{aligned}
$$
This last estimate combined with \eqref{va1} yields $v(r)\geq r|v'(r)|/(N-2)$ for all $r>0$. This shows that \eqref{convr} holds and thus, by Theorem \ref{thgs}(iv), the system \eqref{GM} has no radial solutions. 
\medskip

\section{Proof of Theorem \ref{thex} }

(i) By \eqref{roro} we have $\rho(x)\geq c|x|^{-a}$ for all $x\in \R^N\setminus B_1$. 
Thus, if $a\leq 2$ it follows that 
$$
\int_{\R^N\setminus B_1}\rho(x)|x|^{2-N} dx=\infty
$$ 
and then by Theorem \ref{thgs}(ii) we deduce that the system \eqref{GM} has no solutions. 

Assume now that $2<a\leq 2\big(1+\frac{1}{m}\big)$ holds. We will check that the integral condition in Theorem \ref{thgs}(iii) holds. 
Let $x\in \R^N\setminus B_1$. Then, 
$$
\begin{aligned}
\int_{\R^N} \frac{\rho(y)}{|x-y|^{N-2}}dy & \geq \int_{|y|>|x|} \frac{\rho(y)}{|x-y|^{N-2}}dy
\geq C \int_{|y|>|x|} \frac{|y|^{-a}}{|x-y|^{N-2}}dy\\
&\geq C\int_{|y|>|x|} \frac{|y|^{-a}}{(2|y|)^{N-2}}dy\geq C|x|^{2-a}.
\end{aligned}
$$
Then, since $a\leq 2\big(1+\frac{1}{m}\big)$ we find 
$$
\int_{\R^N}|x|^{2-N} \Big(\int_{\R^N} \frac{\rho(y)}{|x-y|^{N-2}}dy\Big)^m dx\geq 
C \int_{|x|>1}|x|^{2-N} \cdot |x|^{-m(a-2)}dx 
=\infty,
$$
and by Theorem \ref{thgs}(iii) we deduce that \eqref{GM} has no positive solutions. 

(ii)  Let 
\begin{equation}\label{bbb}
b=\frac{m(a-2)-2}{s+1}>0.
\end{equation}
Note that condition \eqref{eqaa} is equivalent to
\begin{equation}\label{eqab}
p(a-2)-bq\geq a.
\end{equation}
For $\overline M_1>\underline M_1>0$ and $\overline M_2>\underline M_2>0$, consider the set 
$$
\Sigma=\left\{u,v\in C(\R^N): 
\begin{aligned}
\underline M_1 Z_{a-2}& \leq u\leq  \overline M_1 Z_{a-2}\\
\underline M_2 Z_b & \leq v\leq  \overline M_2 Z_b
\end{aligned} \quad\mbox{ in }\R^N \right\},
$$
where $Z_\gamma (x)=(1+|x|^2)^{-\gamma/2}$, $\gamma>0$. 
For $(u,v)\in \Sigma$, let $Tu, Tv\in C^2(\R^N)$ be the unique solutions of 
\begin{equation}\label{eqz}
\begin{cases}
\displaystyle   -\Delta Tu=\frac{u^p}{v^q}+\rho(x) \quad\mbox{ in }\R^N\, , \\[0.1in]
\displaystyle   -\Delta Tv=\frac{u^m}{(Tv)^s} \quad\mbox{ in }\R^N,\\[0.1in]
Tu, Tv\to 0\mbox{ as }|x|\to \infty.
\end{cases}
\end{equation}
The existence and uniqueness of $Tv\in C^2(\R^N)$ follows from Lemma \ref{zz}(ii) with $\phi(x)=u^m(x)\simeq |x|^{-m(a-2)}$, $\gamma=m(a-2)$ and by \eqref{ams} one has  $2<\gamma=m(a-2)<(N-2)s+N$.

To prove the existence of $Tu$ we first note that from \eqref{roro}, \eqref{bbb}, \eqref{eqab}  and the definition of the set $\Sigma$ one has
\begin{equation}\label{za1}
\begin{aligned}
\frac{u^p}{v^q}+\rho(x) & \leq \overline M_1^p \underline M_2^{-q} Z_{a-2}^p Z_b^{-q}+\beta Z_a\\
&=\overline M_1^p \underline M_2^{-q} Z_{p(a-2)-bq} +\beta Z_a\\
&\leq \Big( \overline M_1^p \underline M_2^{-q} +\beta\Big) Z_a\quad\mbox{ in }\R^N.
\end{aligned}
\end{equation}
Since $a>2$, the function $f=\frac{u^p}{v^q}+\rho(x)$ satisfies \eqref{finiteint}. Indeed, by \eqref{za1} for all $x\in \R^N$ we have
$$
\begin{aligned}
\int_{\R^N} \mathcal{G}_0(x-y) f(y) dy&\leq C \int_{\R^N} |x-y|^{2-N}(1+|y|^2)^{-a/2}  dy\\[0.1in]
&\leq C\int_{|y|<2|x|} |x-y|^{2-N} dy+C\int_{|y|>2|x|} |x-y|^{2-N}|y|^{-a} dy\\[0.1in]
&\leq C\int_{|x-y|<3|x|} |x-y|^{2-N} dy+C\int_{|y|>2|x|} |y|^{2-N-a} dy\\[0.1in]
&\leq C(|x|^2+|x|^{2-a})<\infty.
\end{aligned}
$$
By Proposition \ref{prep0} it follows that 
$$
Tu=\int_{\R^N} \mathcal{G}_0(x-y)\Big[\frac{u^p(y)}{v^q(y)}+\rho(y)  \Big] dy,
$$
and this shows the existence and uniqueness of $Tu\in C^2(\R^N)$. 

Define $\Phi:\Sigma\to C(\R^N)\times C(\R^N)$ by $\Phi(u,v)=(Tu, Tv)$. Then, fixed points of $\Phi$ are solutions of \eqref{GM}. 

\begin{lemma}\label{MMz1}
If $\overline M_1>\underline M_1>0$ and $\overline M_2>\underline M_2>0$ satisfy 
\begin{subequations}
\begin{align}
(a-2)(N-a)\overline M_1&\geq \overline M_1^p \underline M_2^{-q}+\beta \, , \label{eqz1} \\[0.07in]
(a-2)N  \underline M_1 & =\alpha \, , \label{eqz2}\\
 b(N-b-2) \overline M_2&=\overline M_1^m \overline M_2^{-s} \, , \label{eqz3}\\ 
bN  \underline M_2 & = \underline M_1^m \underline M_2^{-s}\, , \label{eqz4}
\end{align}
\end{subequations}
then $\Phi(\Sigma)\subset \Sigma$. 
\end{lemma}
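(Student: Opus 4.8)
The plan is to mimic the argument of Lemma \ref{MM}, replacing the exponential comparison functions $W_\gamma$ by the polynomial ones $Z_\gamma$ and the Bessel-type estimate \eqref{WA} by the estimate \eqref{ZA} of Lemma \ref{funcz}. Fix $(u,v)\in\Sigma$ and write $(Tu,Tv)$ for the solutions of \eqref{eqz}; I must show the four inequalities $\underline M_1 Z_{a-2}\le Tu\le \overline M_1 Z_{a-2}$ and $\underline M_2 Z_b\le Tv\le \overline M_2 Z_b$ in $\R^N$. The key structural facts I will use repeatedly are: (1) both $-\Delta$ and $-\Delta+\text{(lower order positive term)}$ satisfy the maximum principle on $\R^N$ for functions vanishing at infinity (here there is no zeroth-order term for $Tu$, but superharmonic comparison still works since $Tu$ and the comparison function both tend to $0$); (2) $Z_\gamma$ is, up to constants, a sub/supersolution of $-\Delta$ with right-hand side a multiple of $Z_{\gamma+2}$, by \eqref{ZA}.

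First, the upper bound on $Tu$. From \eqref{za1} we have $-\Delta Tu=\tfrac{u^p}{v^q}+\rho\le(\overline M_1^p\underline M_2^{-q}+\beta)Z_a$ in $\R^N$. Set $\overline\zeta=\dfrac{\overline M_1^p\underline M_2^{-q}+\beta}{(a-2)(N-a)}\,Z_{a-2}$. By Lemma \ref{funcz}, since $a-2\in(0,N-2)$ so that $(N-(a-2)-2)=(N-a)>0$, we get $-\Delta\overline\zeta\ge(a-2)(N-a)\cdot\dfrac{\overline M_1^p\underline M_2^{-q}+\beta}{(a-2)(N-a)}\,Z_a=(\overline M_1^p\underline M_2^{-q}+\beta)Z_a\ge-\Delta Tu$, and both vanish at infinity; the maximum principle gives $Tu\le\overline\zeta$, and \eqref{eqz1} gives $\overline\zeta\le\overline M_1 Z_{a-2}$. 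For the lower bound, $-\Delta Tu\ge\rho\ge\alpha Z_a$ by \eqref{roro}; with $\underline\zeta=\dfrac{\alpha}{(a-2)N}Z_{a-2}$, Lemma \ref{funcz} gives $-\Delta\underline\zeta\le(a-2)N\cdot\dfrac{\alpha}{(a-2)N}Z_a=\alpha Z_a\le-\Delta Tu$, hence $Tu\ge\underline\zeta$, and \eqref{eqz2} says $\underline\zeta=\underline M_1 Z_{a-2}$.

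Next, the bounds on $Tv$. From $\underline M_1 Z_{a-2}\le u\le\overline M_1 Z_{a-2}$ we get $\underline M_1^m Z_{m(a-2)}(Tv)^{-s}\le-\Delta Tv\le\overline M_1^m Z_{m(a-2)}(Tv)^{-s}$. Note $m(a-2)=b(s+1)+2$, so $Z_{m(a-2)}=Z_{b(s+1)+2}$; I will test against $Z_b$. For the supersolution take $\overline\xi=\overline M_2 Z_b$: by Lemma \ref{funcz}, $-\Delta\overline\xi\ge b(N-b-2)\overline M_2 Z_{b+2}$, and since $\overline M_1^m Z_{m(a-2)}\overline\xi^{-s}=\overline M_1^m\overline M_2^{-s}Z_{b(s+1)+2-bs}=\overline M_1^m\overline M_2^{-s}Z_{b+2}$, condition \eqref{eqz3} forces $-\Delta\overline\xi\ge\overline M_1^m Z_{m(a-2)}\overline\xi^{-s}$; note $b\in(0,N-2)$ is guaranteed by \eqref{ams} and \eqref{bbb} so $N-b-2>0$. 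Comparing $-\Delta Tv\le\overline M_1^m Z_{m(a-2)}(Tv)^{-s}$ with $-\Delta\overline\xi\ge\overline M_1^m Z_{m(a-2)}\overline\xi^{-s}$, the monotonicity of $t\mapsto t^{-s}$ and the iteration/maximum-principle argument used for uniqueness in Lemma \ref{eax} yield $Tv\le\overline\xi=\overline M_2 Z_b$. Symmetrically, $\underline\xi=\underline M_2 Z_b$ satisfies $-\Delta\underline\xi\le bN\underline M_2 Z_{b+2}=\underline M_1^m Z_{m(a-2)}\underline\xi^{-s}$ by \eqref{eqz4}, which together with the lower bound on $-\Delta Tv$ gives $Tv\ge\underline\xi=\underline M_2 Z_b$. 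This establishes $(Tu,Tv)\in\Sigma$, hence $\Phi(\Sigma)\subset\Sigma$.

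The main obstacle is not any single estimate but making sure the parameter ranges are consistent: one must check $a-2\in(0,N-2)$ (so $N-a>0$, used in the $Tu$ bounds) and $b\in(0,N-2)$ (so $N-b-2>0$, used in the $Tv$ bounds). The first follows from the hypothesis $2(1+\tfrac1m)<a<N$ of Theorem \ref{thex}(ii) (in particular $a<N$), and the second from $b=\tfrac{m(a-2)-2}{s+1}>0$ together with \eqref{ams}, which is exactly $m(a-2)<(N-2)s+N$, i.e.\ $b<N-2$. A secondary subtlety is that the comparison for $Tv$ involves the singular nonlinearity $\psi(x)v^{-s}$ rather than a linear term, so one cannot invoke the ordinary maximum principle directly; instead one repeats the monotone-iteration/pointwise-maximum argument from the uniqueness part of Lemma \ref{eax}, exactly as was done there. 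Everything else is a routine bookkeeping of constants.
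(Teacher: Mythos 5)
Your proof is correct and follows essentially the same route as the paper: you use \eqref{za1} and Lemma \ref{funcz} to compare $Tu$ against the scaled barriers $\overline\zeta,\underline\zeta$ via the maximum principle, then compare $Tv$ against $\overline\xi=\overline M_2 Z_b$ and $\underline\xi=\underline M_2 Z_b$ using the monotone comparison argument from the uniqueness step of Lemma \ref{eax}, exactly as in the paper. Your explicit verification that $a-2\in(0,N-2)$ and $b\in(0,N-2)$ (so that the constants $(a-2)(N-a)$ and $b(N-b-2)$ in Lemma \ref{funcz} are positive) is a helpful addition that the paper leaves implicit, but it does not change the argument.
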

\begin{proof} Using  \eqref{za1} we have
$$
-\Delta Tu\leq \Big( \overline M_1^p \underline M_2^{-q} +\beta\Big) Z_a \quad\mbox{ in }\R^N.
$$
Now, by Lemma \ref{funcz}  the function 
$\displaystyle 
\overline \zeta=\frac{\overline M_1^p \underline M_2^{-q} +\beta}{(a-2)(N-a)} Z_{a-2}
$
satisfies
$$
\begin{cases}
-\Delta \overline \zeta\geq \Big( \overline M_1^p \underline M_2^{-q} +\beta\Big) Z_a\geq -\Delta Tu \quad\mbox{ in }\R^N\, ,\\
\;\;\; \overline \zeta(x)\,, \, Tu(x)\to 0\quad\mbox{ as }|x|\to \infty.
\end{cases}
$$
By the maximum principle and \eqref{eqz1} we find $Tu\leq  \overline\zeta \leq \overline M_1 Z_{a-2}$ in $\R^N$. Next,
$$
-\Delta Tu\geq \rho(x) \geq \alpha Z_a  \quad\mbox{ in }\R^N.
$$
Thus, by Lemma \ref{funcz} and \eqref{roro} the function $\displaystyle \underline \zeta=\frac{\alpha}{(a-2)N } Z_{a-2}$ satisfies 
$$
\begin{cases}
-\Delta  \underline \zeta   \leq \alpha Z_{a}\leq \rho(x) \leq -\Delta Tu  \quad\mbox{ in }\R^N\, ,\\
\;\;\;  \underline \zeta (x)\, , Tu(x)\to 0\quad\mbox{ as }|x|\to \infty.
\end{cases}
$$
By the maximum principle and \eqref{eqz2} one has $Tu\geq \underline \zeta=\underline M_1 Z_{a-2}$ in $\R^N$. 

\medskip

It remains to establish the inequality $\underline M_2 Z_b\leq Tv\leq \overline M_2 Z_b$ in $\R^N$. 

From  $\underline M_1 Z_{a-2}\leq u\leq \overline M_1 Z_{a-2}$ in $\R^N$, one has that $Tv$ satisfies 
$$
\underline M_1^m Z_{a-2}^m (Tv)^{-s} \leq -\Delta Tv\leq \overline M_1^m Z_{a-2}^m (Tv)^{-s}\quad\mbox{ in }\R^N.
$$
We observe that from \eqref{bbb}, \eqref{eqz3} and Lemma \ref{funcz}, the function  $\overline \xi=\overline M_2 Z_b$ satisfies 
$$
-\Delta \overline \xi \geq b(N-b-2)  \overline M_2 Z_{b+2}= \overline M_1^m Z_{a-2}^m \overline \xi^{-s}\quad \mbox{ in }\R^N.
$$
Hence
$$
\begin{cases}
-\Delta Tv\leq \overline M_1^m Z_{a-2}^m (Tv)^{-s}   \quad\mbox{ in }\R^N\, ,\\[0.07in]
-\Delta \overline \xi \geq  \overline M_1^m Z_{a-2}^m \overline \xi^{-s}\quad \mbox{ in }\R^N\, , \\[0.05in]
\;\;\; T v (x)\, , \,  \overline \xi (x)\to 0\quad\mbox{ as }|x|\to \infty.
\end{cases}
$$
With the same method as in the proof of the uniqueness in Lemma \ref{eax} we deduce $Tv \leq \overline \xi=\overline M_2 Z_b$ in $\R^N$.  Finally, from  \eqref{bbb}, \eqref{eqz4} and Lemma \ref{funcz} we deduce that $\underline \xi=\underline M_2 Z_b$ satisfies 
$$
-\Delta \underline \xi \leq bN \underline M_2 Z_{b+2}= \underline M_1^m Z_{a-2}^m \underline \xi^{-s}\quad \mbox{ in }\R^N.
$$
As above, it follows that $Tv\geq \underline \xi=\underline M_2 Z_b$ in $\R^N$ and this concludes our proof.
\end{proof}

\begin{lemma}\label{conste}
There exist $\varepsilon, \delta>0$ such that if  $\alpha, \beta>0$ satisfy \eqref{ed}, then, there exist  $\overline M_1>\underline M_1>0$ and $\overline M_2>\underline M_2>0$ that fulfill \eqref{eqz1}-\eqref{eqz4} and thus $\Sigma$ is an invariant set of $\Phi$.
\end{lemma}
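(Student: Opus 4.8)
\medskip

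\noindent\textbf{Proof proposal.} The plan is to mimic the argument of Lemma~\ref{const}. First I would replace the single inequality \eqref{eqz1} by the stronger pair
$$
\tfrac{1}{2}(a-2)(N-a)\,\overline M_1=\overline M_1^{\,p}\,\underline M_2^{\,-q}
\qquad\text{and}\qquad
\tfrac{1}{2}(a-2)(N-a)\,\overline M_1\ge\beta,
$$
whose sum gives back \eqref{eqz1}; it then suffices to produce $\overline M_i>\underline M_i>0$ solving these two relations together with \eqref{eqz2}--\eqref{eqz4}. Note first that the four coefficients occurring in \eqref{eqz1}--\eqref{eqz4} are strictly positive: $2<a<N$ handles $(a-2)(N-a)$ and $(a-2)N$; the hypothesis $a>2\big(1+\frac1m\big)$ forces $b=\frac{m(a-2)-2}{s+1}>0$; and \eqref{ams} gives $m(a-2)-2<(N-2)(s+1)$, hence $0<b<N-2$ and $N-b-2>0$.

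Next I would solve the four equalities explicitly, in the order \eqref{eqz2}, \eqref{eqz4}, then the equality above, then \eqref{eqz3}. From \eqref{eqz2}, $\underline M_1=\alpha/[(a-2)N]$; feeding this into \eqref{eqz4} (which reads $\underline M_2^{\,s+1}=\underline M_1^{\,m}/(bN)$) gives $\underline M_2=c_1\,\alpha^{m/(s+1)}$; from the equality above, $\overline M_1^{\,p-1}=\tfrac12(a-2)(N-a)\,\underline M_2^{\,q}$, so that, recalling $\sigma=\frac{mq}{(p-1)(s+1)}$,
$$
\overline M_1=c_2\,\alpha^{\sigma};
$$
and finally \eqref{eqz3} yields $\overline M_2=c_3\,\alpha^{\sigma m/(s+1)}$. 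Here $c_1,c_2,c_3>0$ depend only on $p,q,m,s,a,N$.

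With these explicit formulas in hand, three conditions remain to be checked. The inequality $\tfrac12(a-2)(N-a)\overline M_1\ge\beta$ becomes $\beta\le\delta\,\alpha^{\sigma}$ with $\delta:=\tfrac12(a-2)(N-a)c_2$, which is exactly the upper bound in \eqref{ed}. The ordering $\overline M_1>\underline M_1$ reads $c_2\alpha^{\sigma}>\alpha/[(a-2)N]$, i.e.\ $\alpha^{\sigma-1}>\text{const}$; since $0<\sigma<1$ the exponent $\sigma-1$ is negative, so this holds exactly for $\alpha<\varepsilon$ with $\varepsilon>0$ small (and I would shrink $\varepsilon$ further if needed so that $\varepsilon\le\delta^{1/(1-\sigma)}$, ensuring the interval $(\alpha,\delta\alpha^{\sigma})$ for $\beta$ is nonempty, consistent with $\alpha<\beta$ in \eqref{ed}). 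Finally, since $b(N-b-2)<bN$, the relations $\overline M_2^{\,s+1}=\overline M_1^{\,m}/[b(N-b-2)]$ and $\underline M_2^{\,s+1}=\underline M_1^{\,m}/(bN)$ show that $\overline M_1\ge\underline M_1$ already forces $\overline M_2>\underline M_2$, so no new restriction arises. Hence, for $\alpha,\beta$ satisfying \eqref{ed}, the constants $\underline M_1<\overline M_1$ and $\underline M_2<\overline M_2$ fulfill \eqref{eqz1}--\eqref{eqz4}, and Lemma~\ref{MMz1} gives $\Phi(\Sigma)\subset\Sigma$.

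The computations are entirely routine; the one point deserving attention is the exponent bookkeeping that produces $\overline M_1\simeq\alpha^{\sigma}$ --- this is precisely why the scaling $\beta<\delta\alpha^{\sigma}$ appears in \eqref{ed} --- together with the observation that the sign condition $\sigma<1$ is exactly what makes the ordering $\overline M_1>\underline M_1$ achievable by taking $\alpha$ small (if $\sigma\ge1$ it would fail, in line with Theorem~\ref{thex}(i)).
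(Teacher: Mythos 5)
Your proposal is correct and follows the same route as the paper: replace \eqref{eqz1} by the stronger pair of conditions (one equality, one inequality), solve the resulting four equalities sequentially in the order \eqref{eqz2}, \eqref{eqz4}, the new equality, \eqref{eqz3} to obtain $\underline M_1\simeq\alpha$, $\underline M_2\simeq\alpha^{m/(s+1)}$, $\overline M_1\simeq\alpha^{\sigma}$, $\overline M_2\simeq\alpha^{\sigma m/(s+1)}$, and then read off the admissible range of $\alpha,\beta$ from the remaining inequalities, using $\sigma<1$ to make the exponent $\sigma-1$ negative. This is exactly what the paper does via \eqref{eqz1a}--\eqref{eqz4a}, \eqref{MMM}, \eqref{rangm}, and \eqref{range1}--\eqref{range3}.

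One small improvement in your version is worth flagging: you observe that since $b(N-b-2)<bN$, the equalities $\overline M_2^{\,s+1}=\overline M_1^{\,m}/[b(N-b-2)]$ and $\underline M_2^{\,s+1}=\underline M_1^{\,m}/(bN)$ make the ordering $\overline M_2>\underline M_2$ an automatic consequence of $\overline M_1\ge\underline M_1$, so no extra smallness constraint on $\alpha$ is needed for it. The paper instead includes the corresponding bound $\big(\tfrac{D}{B}\big)^{(s+1)/(m(1-\sigma))}$ in the minimum defining $\varepsilon$; your computation shows that this term is in fact dominated by $\big(\tfrac{C}{A}\big)^{1/(1-\sigma)}$ and hence redundant. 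Your explicit verification that the coefficients $(a-2)(N-a)$, $(a-2)N$, $b$ and $N-b-2$ are strictly positive (using $2<a<N$, $a>2(1+\tfrac1m)$, and \eqref{ams}) is also a useful bit of care that the paper leaves implicit.
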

\begin{proof}
It is enough to show the existence of $\overline M_1>\underline M_1>0$ and $\overline M_2>\underline M_2>0$ that satisfy 
\begin{subequations}
\begin{align}
\frac{(a-2)(N-a)}{2} \overline M_1&= \overline M_1^p \underline M_2^{-q}\quad\mbox{ and } \quad \frac{(a-2)(N-a)}{2} \overline M_1\geq \beta \, , \label{eqz1a} \\[0.07in]
(a-2)N  \underline M_1 & =\alpha \, , \label{eqz2a}\\
 b(N-b-2) \overline M_2&=\overline M_1^m \overline M_2^{-s} \, , \label{eqz3a}\\ 
bN   \underline M_2 & = \underline M_1^m \underline M_2^{-s}\, , \label{eqz4a}
\end{align}
\end{subequations}
Starting from \eqref{eqz2a}, then \eqref{eqz4a}, \eqref{eqz1a}$_1$ and then \eqref{eqz3a} we find
\begin{equation}\label{MMM}
\begin{aligned}
\underline M_1&=A\alpha,\\[0.05in]
\overline M_1&=C\alpha^\sigma,
\end{aligned} 
\qquad 
\begin{aligned}
\underline M_2&=B\alpha^{\frac{m}{s+1}},\\[0.05in]
\overline M_2&=D\alpha^{\frac{\sigma m}{s+1}},
\end{aligned} 
\end{equation}
where
\begin{equation}\label{rangm}
\begin{aligned}
A&=\frac{1}{(a-2)N}\, ,\\
B&=\left(\frac{A^m}{bN}  \right)^{\frac{1}{s+1}},
\end{aligned} 
\qquad 
\begin{aligned}
C&=\left(\frac{(a-2)(N-a) B^q}{2}  \right)^{\frac{1}{p-1}},\\
D&=\left(\frac{C^m}{b(N-b-2)}  \right)^{\frac{1}{s+1}}.
\end{aligned} 
\end{equation}
It remains to determine the rage of $\alpha$ and $\beta$ such that 
\begin{equation}\label{rangab}
\overline M_1>\underline M_1>0 \, , \quad \overline M_2>\underline M_2>0\quad\mbox{ and }\; \quad \frac{(a-2)(N-a)}{2} \overline M_1\geq \beta.
\end{equation}
In light of \eqref{MMM} and \eqref{rangm} the first two inequalities in \eqref{rangab} hold if 
\begin{equation}\label{range1}
0<\alpha<\min \left\{ \Big(\frac{C}{A}\Big)^{\frac{1}{1-\sigma}}, \Big(\frac{D}{B}\Big)^{\frac{s+1}{m(1-\sigma)}}\right\}.
\end{equation}
Also, the third inequality in \eqref{rangab} is equivalent to 
\begin{equation}\label{range2}
\alpha<\beta<\frac{(a-2)(N-a)}{2} C\alpha^\sigma,
\end{equation}
which also implies 
\begin{equation}\label{range3}
0<\alpha<\left[\frac{(a-2)(N-a)}{2} C  \right]^{\frac{1}{1-\sigma}}
\end{equation}
Let now 
$$
\varepsilon =\min \left\{ \Big(\frac{C}{A}\Big)^{\frac{1}{1-\sigma}}, \Big(\frac{D}{B}\Big)^{\frac{s+1}{m(1-\sigma)}}, \left[\frac{(a-2)(N-a)}{2} C   \right]^{\frac{1}{1-\sigma}}\right\}
$$
and 
$$
\delta= \frac{(a-2)(N-a)}{2} C.
$$
Then, if \eqref{ed} is fulfilled, then \eqref{range1}-\eqref{range3} hold, which imply that \eqref{eqz1a}-\eqref{eqz4a}  and \eqref{rangab} hold. By Lemma \ref{MMz1} it now follows that $\Sigma$ is an invariant set of $\Phi$ and this concludes the proof.
\end{proof}

To finish the proof of Theorem \ref{thex}(ii) we follow the same approach as in the proof of Theorem \ref{th1}(iii). We use the Schauder fixed point theorem on $C(\overline{B_n})\times C(\overline{B_n})$ and then, a diagonal process allows us to deduce the existence of a positive solution to \eqref{GM}.
\qed

\medskip

\noindent{\bf Proof of Corollary \ref{corex}.}  Since $a<N$ and $m\leq s+1$ it follows that \eqref{ams} holds. Also, from $2\Big(1+\frac{1}{m}\Big)<a$, $m\leq p-1$ and $0<\sigma<1$ we find
$$
\frac{2p}{p-1}\leq 2\Big(1+\frac{1}{m}\Big)\leq a+\sigma \Big\{2\Big(1+\frac{1}{m}\Big)-a\Big\},
$$
so \eqref{eqaa} holds. By Theorem \ref{thex} we deduce the existence of a positive solution to \eqref{GM}. 
\qed

\medskip

\noindent{\bf Proof of Corollary \ref{cor2}.}  Part (i) follows from Theorem \ref{thgs}(i). 

(ii) Assume $p>\frac{N}{N-2}$, so that $\frac{2p}{p-1}<N$. We show that one can find $q,m,s$ and $a$ that satisfy $2\big(1+\frac{1}{m}\big)<a<N$, $0<\sigma<1$, \eqref{ams} and \eqref{eqaa}. 

Let $a>0$ be close to $N$ such that $2<\frac{2p}{p-1}<a<N$. By letting $m>0$ large enough, one has $2\big(1+\frac{1}{m}\big)<a<N$. At this point $a$ is fixed. Now, by taking $s>m$ large we have that \eqref{ams} is fulfilled.  Next, by letting $q>0$ small one has $\sigma\in (0,1)$ is small and fulfils \eqref{eqaa} in Theorem \ref{thex}. It remains to choose $\beta>\alpha>0$ in \eqref{roro} and \eqref{ed} in order to apply Theorem \ref{thex}(ii) and thus to deduce the existence of a positive solution to \eqref{GM}. 
\qed

\medskip

\noindent{\bf Proof of Corollary \ref{cor3}.}  Part (i) follows from Theorem \ref{thgs}(i) and Corollary \ref{cor1}(ii). 

(ii) Assume $p>\frac{N+2}{N-2}$. It is well known (see \cite[Appendix A]{GS81}) that the function 
$$
w(x)=\left[\frac{A \sqrt{N(N-2)}}{A^2+|x|^2}\right]^{\frac{N-2}{2}}\,,\quad A>0,
$$ satisfies
$$
-\Delta w=w^{\frac{N+2}{N-2}}\quad\mbox{ in }\R^N.
$$
Take now 
$$
q=p-\frac{N+2}{N-2}\,,\quad s>0\,\mbox{ \;\; and \;\; }\; m=\frac{N+2}{N-2}+s.
$$
Then $u=v=w$ satisfy \eqref{GM}.
\qed

\medskip

\end{document}